\newcommand{\e }{\varepsilon }
\renewcommand{\O }{\Omega}
\renewcommand{\d }{\delta}
\renewcommand{\l }{\lambda }
\long\def\salta#1{\relax}
\newcommand{\re}{{I\!\!R}}
\newcommand{\dyle}{\displaystyle}
\newcommand{\ene}{{I\!\!N}}
\newcommand{\io}{\int\limits_\O}
\newcommand{\car}{{\raise2pt\hbox{$\chi$}}}
\newcommand{\dint}{\dyle\int}
\renewcommand{\a }{\alpha }
\renewcommand{\d }{\delta }
\renewcommand{\l }{\lambda }
\renewcommand{\L }{\Lambda }
\newcommand{\s }{\sigma }
\renewcommand{\O }{\Omega }
\newcommand{\RR}{\mathbb R}
\renewcommand{\le}{\leqslant}
\renewcommand{\leq}{\leqslant}
\renewcommand{\ge}{\geqslant}
\renewcommand{\geq}{\geqslant}
\def\Int{\displaystyle\int}
\newtheorem{Theorem}{Theorem}[section]
\newtheorem{Definition}[Theorem]{Definition}
\newtheorem{Proposition}[Theorem]{Proposition}
\newtheorem{remark}[Theorem]{Remark}
\begin{document}

\title[Optimal mixed Hardy constant]{Attainability of the fractional Hardy constant with nonlocal mixed boundary conditions. Applications}
\author[B. Abdellaoui, A. Attar, A. Dieb, I. Peral]{Boumediene Abdellaoui, Ahmed Attar, Abdelrazek Dieb,  Ireneo Peral }
\thanks{ This work is partially supported by project  MTM2013-40846-P and MTM2016-80474-P, MINECO, Spain.} \keywords{fractional laplacian, mixed boundary
condition, Hardy inequality, doubly-critical problem.
\\
\indent 2000 {\it Mathematics Subject Classification:MSC 2000: 35R11, 35A15, 35A16,35J61, 47G20.} }

\address{\hbox{\parbox{5.7in}{\medskip\noindent {B. Abdellaoui, A. Attar: Laboratoire d'Analyse Nonlin\'eaire et Math\'ematiques
Appliqu\'ees. \hfill \break\indent D\'epartement de Math\'ematiques, Universit\'e Abou Bakr Belka\"{\i}d, Tlemcen, \hfill\break\indent Tlemcen 13000,
Algeria.}}}}
\address{\hbox{\parbox{5.7in}{\medskip\noindent {A. Dieb: Laboratoire d'Analyse Nonlin\'{e}aire et Math\'{e}matiques Appliqu\'{e}es.
\hfill \break\indent Universit\'{e} Abou Bakr Belka\"{i}d, Tlemcen, Tlemcen 13000, Algeria, \\
and  D\'{e}partement de Math\'{e}matiques, Universit\'{e} Ibn Khaldoun, Tiaret, Tiaret 14000, Algeria.}}}}
\address{\hbox{\parbox{5.7in}{\medskip\noindent{I. Peral: Departamento de Matem\'aticas,\\ Universidad Aut\'onoma de Madrid,\\
        28049, Madrid, Spain. \\[3pt]
        \em{E-mail addresses: }\\{\tt boumediene.abdellaoui@inv.uam.es, \tt ahm.attar@yahoo.fr, \tt dieb\_d@yahoo.fr, \tt ireneo.peral@uam.es         }.}}}}

\date{}



\begin{abstract}
The first goal of this paper is to study necessary and sufficient conditions to obtain the attainability of the \textit{fractional Hardy inequality }
$$
\Lambda_{N}\equiv\Lambda_{N}(\O):=\inf_{\{\phi\in \mathbb{E}^{s}(\Omega, D), \phi\neq
0\}}\dfrac{\frac{a_{d,s}}{2}\displaystyle\int_{\mathbb{R}^d}\int_{\mathbb{R}^d}\dfrac{|\phi(x)-\phi(y)|^{2}}{|x-y|^{d+2s}}dx dy}{\displaystyle\io
\frac{\phi^2}{|x|^{2s}}\,dx},
$$
where  $\O$ is a bounded domain of $\RR^d$,  $0<s<1$, $D\subset \mathbb{R}^d\setminus \Omega$ a nonempty open set and
$$\mathbb{E}^{s}(\Omega,D)=\left\{ u \in H^s(\RR^d):\, u=0 \text{ in } D\right\}.$$
The second aim of the paper is to study the  \textit{mixed Dirichlet-Neumann boundary problem} associated to the minimization problem and  related properties;
precisely,  to study semilinear elliptic problem  for the \textit{fractional laplacian},  that is,
$$P_{\lambda} \, \equiv
\left\{
\begin{array}{rcll}
(-\Delta)^s u &= & \lambda \dfrac{u}{|x|^{2s}} +u^p & {\text{ in }}\O,\\
   u & > & 0 &{\text{ in }} \O, \\
  \mathcal{B}_{s}u&:=&u\chi_{D}+\mathcal{N}_{s}u\chi_{N}=0  &{\text{ in }}\RR^{d}\backslash \O, \\
\end{array}\right.
$$
with  $N$ and $D$ open sets in $\RR^{d}\backslash\Omega$ such that $N \cap D=\emptyset$ and $\overline{N}\cup \overline{D}= \re^{d}\backslash\Omega$, $d>2s$, $\l>
0$ and $0<p\le 2_s^*-1$, $2_s^*=\frac{2d}{d-2s}$.  We emphasize that the nonlinear term can be critical.

The operators $(-\Delta)^s $, fractional laplacian,  and $\mathcal{N}_{s}$,  nonlocal Neumann condition, are defined below in \eqref{frac lap} and \eqref{Nmn def}
respectively.
\end{abstract}

\maketitle

\rightline{\textit{ To Rafa de la Llave in his 60th birthday, with our best wishes}.}


\section{Introduction}\label{sec:intro}
The problems studied in this paper are motivated by some recent results that we summarized below.

In first place we consider the classical Hardy inequality  proved in \cite{He} (see also \cite{B, FLS, SW, Y}).

\textbf{{Theorem.}}\,{\it (Fractional Hardy inequality).} \textit{Assume that $s\in (0,1)$ is such that $2s<d$, then for all $u\in
\mathcal{C}^{\infty}_{0}(\re^d)$, the following inequality holds,
\begin{equation}\label{Hardyint}
\frac{a_{d,s}}{2}\displaystyle\int_{\mathbb{R}^d}\int_{\mathbb{R}^d}{\frac{|u(x)-u(y)|^2}{|x-y|^{d+2s}}}\, dx\, dy\equiv \dint_{\re^d} \,|\xi|^{2s}
|\hat{u}|^2\,d\xi\geq \Lambda\,\dint_{\re^d} |x|^{-2s} u^2\,dx,
\end{equation}
where $\hat{u}$ is the Fourier transform of $u$ and
\begin{equation}\label{bestC}
\Lambda= 2^{2s}\dfrac{\Gamma^2(\frac{d+2s}{4})}{\Gamma^2(\frac{d-2s}{4})} .
\end{equation}
Moreover the constant $\Lambda$ is optimal and is not attained.}

The optimal constant defined in \eqref{bestC} coincides for every  bounded domain $\Omega$ containing the pole of the Hardy potential. More precisely, if $0\in
\Omega$, then for all $u\in \mathcal{C}_0^\infty(\Omega)$ we have
\begin{equation}\label{hardy}
\frac{a_{N,s}}{2}\displaystyle\int\int_{\mathcal{D}_{\Omega}}{\frac{|u(x)-u(y)|^2}{|x-y|^{d+2s}}}\, dx\, dy\geq \Lambda\int_{\Omega}{\frac{u^2}{|x|^{2s}}\,dx},
\end{equation}
where $$\mathcal{D}_{\Omega}= (\RR^d \times \RR^d ) \setminus( \O^c\times \O^c).$$ The optimality of $\Lambda$ here follows by a scaling argument.

The others starting points for the problems considered in this work,  are some results obtained in  the articles \cite{drv}, \cite{LMPAS}  and \cite{DMPS}.

In  \cite{drv}  the authors consider a natural Neumann condition in the sense that Gauss and Green integration by parts formulas hold for such condition.    More
precisely, if $\Omega$ is a bounded open set in $ \mathbb{R}^d$ with suitable  regularity, then the Neumann problem for the fractional laplacian take the form,
\begin{equation}\label{N}
\left\{
\begin{array}{rcll}
(-\Delta)^s u & = & f & {\text{ in }}\O,\\
      \mathcal{N}_{s}u & = & 0 &{\text{ in }} \RR^{d}\setminus \O,
\end{array}\right.
\end{equation}
where $(-\Delta)^s$ is the fractional Laplacian operator defined by
\begin{equation}\label{frac lap}
(-\Delta)^su(x)=a_{d,s}\; \text{P.V.}\int_{\re^d}\frac{u(x)-u(y)}{|x-y|^{d+2s}}\,dy,
\end{equation}
 $a_{d,s}>0$ being  a  normalization constant given by the Fourier transform (see e.g. \cite{ldk}, \cite{lenoperal}, \cite{valpal} and the references
therein for more properties of this operator), and
\begin{equation}\label{Nmn def}
\mathcal{N}_{s}u(x)=a_{d,s}\Int_{\Omega}\frac{u(x)-u(y)}{|x-y|^{d+2s}}dy, \quad x \in \re^{d}\setminus \overline{\Omega}.
\end{equation}
Notice that as a consequence of the analysis of the sequence of eigenvalues with Neumann condition done in \cite{drv}, we reach that the best constant for the
Hardy inequality with Neumann condition is $0$  and it is attained by any constant function.

With this meaning for the  Neumann condition, the authors in \cite{LMPAS} studied the behavior of the eigenvalues for mixed Dirichlet-Neumann  problems in terms
of the boundary conditions. In particular, they proved a necessary  and sufficient condition for the convergence of the first  eigenvalue of mixed problems to $0$, the principal
eigenvalue for Neumann problem.

 These  previous  results are the inspiration for  our main goal in this paper:   study the attainability of the fractional Hardy constant with mixed boundary
 condition.

More precisely, let $\O\subset \re^d$ be a regular bounded domain containing the origin and consider $N$ and $D$ to be two open sets of $\re^{d}\backslash\Omega$
such that
  $$N \cap D=\emptyset\hbox{  and }     \overline{N}\cup \overline{D}= \re^{d}\backslash\Omega.$$
A such pair $(D,N)$ will be called a \textit{Dirichlet-Neumann configutarion}, D-N configuration to be short.

We define
$$
\Lambda_{N}\equiv\Lambda_{N}(\O)=\inf_{\{\phi\in \mathbb{E}^{s}(\Omega, D), \phi\neq
0\}}\dfrac{\dfrac{a_{d,s}}{2}\displaystyle\int\int_{\mathcal{D}_{\Omega}}\dfrac{|\phi(x)-\phi(y)|^{2}}{|x-y|^{d+2s}}dxdy}{\displaystyle\io
\frac{\phi^2}{|x|^{2s}}\,dx}
$$
where
$$\mathbb{E}^{s}(\Omega,D)=\left\{ u \in H^s(\RR^d):\, u=0 \text{ in } D\right\}.$$
 The above minimizing problem is strongly related to the next \textit{eigenvalue problem}
\begin{equation}\label{P}
\left\{
\begin{array}{rcll}
(-\Delta)^s u & = & \L_N \dfrac{u}{|x|^{2s}} & {\text{ in }}\O,\\
   u & > & 0 &{\text{ in }} \Omega, \\
   \mathcal{B}_{s}u & = & 0 &{\text{ in }} \RR^{d}\backslash \O.
\end{array}\right.
\end{equation}
The mixed boundary condition $\mathcal{B}_{s}$ for the D-N configuration  given by $D$ and $N$  open sets in $\RR^{d}\backslash\Omega$ such that $N \cap D=\emptyset$ and $\overline{N}\cup \overline{D}= \re^{d}\setminus\Omega$, is defined by
\begin{equation}\label{CUST}
\mathcal{B}_{s}u=u\chi_{D}+\mathcal{N}_{s}u\chi_{N},
\end{equation}
and $\mathcal{N}_{s}$ is defined in \eqref{Nmn def}.
As
customary, in \eqref{CUST}, we denoted by $\chi_{A}$ the characteristic function of a set $A$.

In the local case $s=1$, we can mention the works \cite{ACP1} and \cite{ACP2} where the authors have found some conditions  of monotonicity that ensure the attainability or not of the Hardy constant. As a consequence they analyze a doubly-critical problem related to a mixed Sobolev constant.  In \cite{ACP2} the authors deal with the same type of problem associated  to the divergence form elliptic operators associated to  the Caffarelli-Khon-Niremberg
inequalities.

We will extend the previous results to the fractional laplacian framework without any condition of monotonicity and then we get a stronger results than in the local case.

A different kind of nonlocal mixed boundary conditions for the fractional Laplacian was defined recently by several authors, see for example \cite{CT}, where the
authors used the Caffarelli-Silvestre extension to define a suitable nonlocal Neumann boundary condition. We refer also to \cite{gru} for other type of Neumann
condition.

Notice that in the local case one of the main tools to analyze the compactness of the minimizing sequence is to use a suitable Concentration-Compactness
argument that allows to avoid any concentration in $\Omega$ or at the boundary of $\Omega$. In the nonlocal setting we will  consider an alternative approach.  Other
point that gives the difference between the local and the nonlocal case is the fact that in the nonlocal case, the set $N$ can be unbounded and little is known about the regularity
of the solution  in this set.

There are a large literature about semilinear perturbations of the fractional laplacian with Dirichlet boundary conditions. Notice that the  subcritical concave-convex
problem with mixed boundary conditions is studied in \cite{ADV}.

The paper is organized as follows. In Section \ref{prelim}, we introduce some analytical tools needed to study the problem ($P_{\lambda}$), as the natural
fractional Sobolev space associated to problem $(P_\l)$, including some classical functional inequalities and the adaptation of a  Picone inequality type obtained
in \cite{lenoperal}.

The  Hardy constant for mixed problems is treated in Section \ref{sec:Hardy}. We begin by proving a necessary and sufficient condition  for the attainability of the mixed Hardy
constant. The proof is more involved than in the local case and  we prove some previous sharp  estimates to obtain the main result. In Subsection \ref{+} we give
sufficient condition to guarantee the  attainability of $\L_N$; in Subsection \ref{-} we analyze the non-attainability. In both cases we give  explicit examples
where these condition are  realized.

In the last section, Section 4,  among others results, we  study the solvability of the doubly-critical problem. This result is related to the paper \cite{DMPS}
where it is analyzed the  problem in the whole space $\mathbb{R}^d$.

\section{Preliminaries and functional setting.}\label{prelim}
 We introduce in  this section the natural functional framework for our problem and we give some properties and some  embedding results needed when we deal with problem ($P_{\l}$).

Notice that in the whole paper and for simplicity of tipping, we set
\begin{equation}\label{kernel}
d\nu=\dfrac{dx\,dy}{|x-y|^{d+2s}}.
\end{equation}
According to the definition of the fractional Laplacian, see \cite{valpal}, and the integration by parts formula, see \cite{drv}, it is natural to introduce the
following spaces. We denote by $H^s(\RR^d)$ the classical fractional Sobolev space,
\begin{equation}\label{}
H^s(\RR^d)=\left\{u \in L^2(\RR^d):\frac{|u(x)-u(y)|}{|x-y|^{\frac{d}{2}+s}} \in L^2(\RR^d\times \RR^d)\right\},
\end{equation}
endowed with the norm
\begin{equation}\label{}
    \|u\|^2_{H^s(\RR^d)}=
\|u\|_{L^2(\RR^d)}^2 + \dfrac{a_{d,s}}{2}\int\int_{\RR^d\times \RR^d}|u(x)-u(y)|^{2}\,d\nu,
\end{equation}

It is clear that  $H^s(\RR^d)$ is a Hilbert space.

We recall now the classical Sobolev inequality that is proved for instance in \cite{valpal}. See also \cite{Ponce} for an elegant and elementary proof.
\begin{Proposition}\label{PSO}
 Let $s\in (0,1)$ with $d>2s$. There exists a positive constant $S=S(d,s)$ such that,
for any function $u \in H^{s}(\RR^d)$, we have
\begin{equation}\label{sobo00}
    S \|u\|_{L^{2^{*}_{s}}(\RR^d)}^2 \leq \dfrac{a_{d,s}}{2}\Int\Int_{\RR^{d}\times\RR^{d}}|u(x)-u(y)|^{2}\,d\nu,
  \end{equation}
where $2^{*}_{s}=\frac{2d}{d-2s}$.
\end{Proposition}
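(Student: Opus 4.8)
The plan is to deduce \eqref{sobo00} from the Fourier-side identity already recorded in \eqref{Hardyint} together with the classical Hardy--Littlewood--Sobolev inequality.

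First I would reduce to smooth compactly supported functions. Since $\mathcal{C}^{\infty}_{0}(\RR^d)$ is dense in $H^s(\RR^d)$, it is enough to prove \eqref{sobo00} for $u\in\mathcal{C}^{\infty}_{0}(\RR^d)$: if $u_n\to u$ in $H^s(\RR^d)$ then, passing to a subsequence, $u_n\to u$ a.e., so Fatou's lemma yields $\|u\|_{L^{2^{*}_{s}}(\RR^d)}\le\liminf_n\|u_n\|_{L^{2^{*}_{s}}(\RR^d)}$, while the Gagliardo energy $\frac{a_{d,s}}{2}\int\int_{\RR^d\times\RR^d}|u_n(x)-u_n(y)|^2\,d\nu$ converges to that of $u$; letting $n\to\infty$ in \eqref{sobo00} written for $u_n$ gives it for $u$.

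Next, for $u\in\mathcal{C}^{\infty}_{0}(\RR^d)$ the identity in \eqref{Hardyint} reads $\frac{a_{d,s}}{2}\int\int_{\RR^d\times\RR^d}|u(x)-u(y)|^2\,d\nu=\int_{\RR^d}|\xi|^{2s}|\hat u(\xi)|^2\,d\xi=\|(-\Delta)^{s/2}u\|^2_{L^2(\RR^d)}$, where $(-\Delta)^{s/2}u$ is defined through $\widehat{(-\Delta)^{s/2}u}(\xi)=|\xi|^{s}\hat u(\xi)$ and lies in $L^2(\RR^d)$ by Plancherel. Thus it suffices to establish the homogeneous estimate $\|u\|_{L^{2^{*}_{s}}(\RR^d)}\le C\,\|(-\Delta)^{s/2}u\|_{L^2(\RR^d)}$. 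Set $f:=(-\Delta)^{s/2}u\in L^2(\RR^d)$; inverting on the Fourier side gives $\hat u(\xi)=|\xi|^{-s}\hat f(\xi)$, that is, $u=\mathcal{I}_{s}f$, where $\mathcal{I}_{s}$ is the Riesz potential of order $s$, $\mathcal{I}_{s}f(x)=\gamma(d,s)\int_{\RR^d}\frac{f(y)}{|x-y|^{d-s}}\,dy$ with $\gamma(d,s)>0$. The Hardy--Littlewood--Sobolev inequality then provides $\|\mathcal{I}_{s}f\|_{L^q(\RR^d)}\le C(d,s)\,\|f\|_{L^r(\RR^d)}$ whenever $1<r<q<\infty$ and $\frac1q=\frac1r-\frac sd$; the choice $r=2$ forces $q=\frac{2d}{d-2s}=2^{*}_{s}$ (which indeed satisfies $2<q<\infty$ since $0<s$ and $d>2s$), and we obtain $\|u\|_{L^{2^{*}_{s}}(\RR^d)}\le C(d,s)\big(\tfrac{a_{d,s}}{2}\int\int_{\RR^d\times\RR^d}|u(x)-u(y)|^2\,d\nu\big)^{1/2}$, i.e. \eqref{sobo00} with $S=C(d,s)^{-2}$.

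The only delicate point is the identification $u=\mathcal{I}_{s}f$: for an arbitrary tempered distribution with $|\xi|^{s}\hat u\in L^2$ this holds only modulo polynomials, but having reduced to $u\in\mathcal{C}^{\infty}_{0}(\RR^d)$ (or, more generally, retaining $u\in L^2(\RR^d)$, which forces the $L^{2^{*}_{s}}$ function $\mathcal{I}_{s}f$ to coincide with $u$) removes this ambiguity, so the genuine analytic input is the Hardy--Littlewood--Sobolev inequality itself. A fully self-contained alternative, which avoids the Fourier transform entirely, is the elementary argument of \cite{Ponce}: one decomposes the Gagliardo kernel dyadically in $|x-y|$ and bounds $\|u\|_{L^{2^{*}_{s}}}$ directly against the resulting pieces by a truncation and level-set computation, staying within real-variable methods.
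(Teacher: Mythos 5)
The paper itself offers no proof of this proposition --- it simply records the Sobolev inequality and cites \cite{valpal} and \cite{Ponce} for it --- so there is no ``paper's proof'' to compare step by step. Your argument, however, is correct and complete as a standalone derivation: the density of $\mathcal{C}^\infty_0(\RR^d)$ in $H^s(\RR^d)$ plus the convergence of both the $L^{2^*_s}$-norm (via Fatou) and the Gagliardo seminorm (which is a summand of the $H^s$-norm) legitimately reduces the claim to smooth compactly supported $u$; the Fourier identity already written in \eqref{Hardyint} turns the seminorm into $\|(-\Delta)^{s/2}u\|_{L^2}^2$; and the representation $u=\mathcal{I}_s f$ with $f=(-\Delta)^{s/2}u$ together with Hardy--Littlewood--Sobolev for the Riesz potential $\mathcal{I}_s:L^2\to L^{2^*_s}$ (the exponents $\tfrac{1}{2^*_s}=\tfrac12-\tfrac sd$, valid since $d>2s$) closes the estimate. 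Your remark on the polynomial ambiguity in inverting $(-\Delta)^{s/2}$ is the right caution, and it is indeed dispatched by the reduction to $u\in L^2$.

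The only substantive comparison worth making is that the route you chose is genuinely different from what the cited sources do. Both \cite{valpal} (Theorem~6.5 there) and \cite{Ponce} deliberately avoid the Fourier transform and the Hardy--Littlewood--Sobolev inequality: they obtain \eqref{sobo00} by purely real-variable means, splitting the Gagliardo kernel at a scale $r>0$, estimating $\|u\|_{L^{2^*_s}}$ via level sets and a careful choice of $r$ depending on the level, and then optimizing. That approach is more elementary (it does not presuppose HLS, which is itself a theorem of comparable depth) and is the reason the paper flags \cite{Ponce} as ``elegant and elementary.'' Your HLS route is shorter and cleaner if one is willing to take HLS as known, but it shifts the heavy lifting to that black box; the paper's preferred references keep everything self-contained. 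You yourself note this alternative in your closing paragraph, which shows you are aware of the trade-off; either proof is acceptable here.
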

Beside to the Hardy inequalities \eqref{Hardyint} and  \eqref{hardy}, in  the case of bounded domain $\O$, we have the next \textit{ regional }version of the
Hardy inequality which proof can be found in \cite{ABE}.
\begin{Proposition}\label{cor}
Let $\Omega$ be a bounded regular domain such that $0\in \Omega$, then there exists a constant $C\equiv C(\Omega,s,d)>0$ such that for all $u\in
\mathcal{C}^\infty_0(\Omega)$, we have
\begin{equation}\label{boun}
\begin{array}{rcl}
C\dint\limits_{\Omega} \dfrac{|u(x)|^2}{|x|^{2s}}\,dx &\leq&\dint\limits_{\Omega}\dint\limits_{\Omega}(u(x)-u(y))^2\,d\nu.
\end{array}
\end{equation}
\end{Proposition}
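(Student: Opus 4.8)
The plan is to localize the Hardy potential near the pole. Since $0\in\Omega$ is an \emph{interior} point, the singular weight $|x|^{-2s}$ may be cut off to a compact set $\overline{B_{2r_0}(0)}\subset\Omega$, and on such a set the passage from the regional quadratic form $\int_\Omega\int_\Omega$ to the global one $\int_{\RR^d}\int_{\RR^d}$ only costs a bounded lower order term. Concretely, fix $r_0>0$ with $B_{2r_0}(0)\subset\Omega$ and a cutoff $\psi\in\mathcal{C}^\infty_0(B_{2r_0}(0))$ with $0\le\psi\le1$ and $\psi\equiv1$ on $B_{r_0}(0)$. Since $1-\psi^2$ vanishes on $B_{r_0}(0)$, one has $|x|^{-2s}\le\psi(x)^2|x|^{-2s}+r_0^{-2s}$ on $\Omega$, whence for $u\in\mathcal{C}^\infty_0(\Omega)$, extended by $0$ to $\RR^d$,
\begin{equation*}
\int_\Omega\frac{u^2}{|x|^{2s}}\,dx\ \le\ \int_{\RR^d}\frac{(\psi u)^2}{|x|^{2s}}\,dx+r_0^{-2s}\|u\|_{L^2(\Omega)}^2 .
\end{equation*}

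Next I would bound the first term on the right by the fractional Hardy inequality \eqref{Hardyint} applied to $\psi u\in\mathcal{C}^\infty_0(\RR^d)$, namely $\int_{\RR^d}(\psi u)^2|x|^{-2s}\,dx\le\Lambda^{-1}\frac{a_{d,s}}{2}\int_{\RR^d}\int_{\RR^d}|(\psi u)(x)-(\psi u)(y)|^2\,d\nu$, and then expand the global Gagliardo energy of $\psi u$ through $(\psi u)(x)-(\psi u)(y)=\psi(x)\big(u(x)-u(y)\big)+u(y)\big(\psi(x)-\psi(y)\big)$ together with $(a+b)^2\le2a^2+2b^2$. The commutator contribution $\int_{\RR^d}\int_{\RR^d}u(y)^2(\psi(x)-\psi(y))^2\,d\nu$ is $\le C\|u\|_{L^2(\Omega)}^2$, because $y\mapsto\int_{\RR^d}|\psi(x)-\psi(y)|^2|x-y|^{-d-2s}\,dx$ is bounded uniformly in $y$ (split at $|x-y|=1$: near $y$ use that $\psi$ is Lipschitz, far from $y$ use that $\psi$ is bounded with compact support; both integrals converge since $0<s<1$). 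In the main contribution $\int_{\RR^d}\int_{\RR^d}\psi(x)^2(u(x)-u(y))^2\,d\nu$ one exploits that $\mathrm{supp}\,\psi\subset B_{2r_0}(0)\subset\Omega$: the part with $y\in\Omega$ is at most $\int_\Omega\int_\Omega|u(x)-u(y)|^2\,d\nu$ (using $\psi^2\le1$ and $B_{2r_0}\subset\Omega$), while the part with $y\in\RR^d\setminus\Omega$ — where $u(y)=0$ — equals $\int_{B_{2r_0}}\psi^2u^2\,\gamma_\Omega\,dx$ with $\gamma_\Omega(x):=\int_{\RR^d\setminus\Omega}|x-y|^{-d-2s}\,dy\le C\,\mathrm{dist}(x,\partial\Omega)^{-2s}\le C\,\mathrm{dist}(B_{2r_0},\partial\Omega)^{-2s}<\infty$ uniformly on $B_{2r_0}$, hence this part is again $\le C\|u\|_{L^2(\Omega)}^2$. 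Collecting the estimates yields
\begin{equation*}
\int_\Omega\frac{u^2}{|x|^{2s}}\,dx\ \le\ C_1\int_\Omega\int_\Omega|u(x)-u(y)|^2\,d\nu+C_2\|u\|_{L^2(\Omega)}^2,
\end{equation*}
with $C_1,C_2$ depending only on $\Omega$, $s$, $d$.

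It then remains to absorb $\|u\|_{L^2(\Omega)}^2$, and this is the only delicate point: one invokes a fractional Poincar\'e inequality on the bounded regular domain $\Omega$, $\|u\|_{L^2(\Omega)}^2\le C_P\int_\Omega\int_\Omega|u(x)-u(y)|^2\,d\nu$ for $u\in\mathcal{C}^\infty_0(\Omega)$, which is where the boundedness and the regularity of $\partial\Omega$ are genuinely used (it may be obtained, for instance, from the fractional Hardy inequality with the distance to $\partial\Omega$, or by a compactness argument based on the compact embedding $H^s(\Omega)\hookrightarrow L^2(\Omega)$ together with the vanishing of the trace of $\mathcal{C}^\infty_0(\Omega)$-functions). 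Plugging this in gives $\int_\Omega u^2|x|^{-2s}\,dx\le(C_1+C_2C_P)\int_\Omega\int_\Omega|u(x)-u(y)|^2\,d\nu$, that is \eqref{boun} with $C=(C_1+C_2C_P)^{-1}$. I expect this Poincar\'e step — equivalently, the control of $\int_\Omega u^2\gamma_\Omega\,dx$, which is precisely a Hardy inequality with the distance to the boundary — to be the genuine obstacle, everything else being elementary. A shorter variant, avoiding the cutoff, uses the exact identity $\int_{\RR^d}\int_{\RR^d}|u(x)-u(y)|^2\,d\nu=\int_\Omega\int_\Omega|u(x)-u(y)|^2\,d\nu+2\int_\Omega u^2\gamma_\Omega\,dx$ for $u\in\mathcal{C}^\infty_0(\Omega)$; bounding the last integral by $C\int_\Omega\int_\Omega|u(x)-u(y)|^2\,d\nu$ via that boundary-distance Hardy inequality and then applying \eqref{Hardyint} to $u$ gives the statement directly, so the crux is again exactly that estimate.
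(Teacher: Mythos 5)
The paper itself offers no argument for this proposition: it merely points to \cite{ABE} (``which proof can be found in \cite{ABE}''), so there is no internal proof to compare yours against. Your reduction is carried out correctly and cleanly: the pointwise bound $|x|^{-2s}\le \psi^2(x)|x|^{-2s}+r_0^{-2s}$ on $\Omega$, the application of \eqref{Hardyint} to $\psi u$, the commutator estimate using the uniform-in-$y$ bound on $\int_{\RR^d}|\psi(x)-\psi(y)|^2|x-y|^{-d-2s}\,dx$, and the observation that $\gamma_\Omega(x)=\int_{\Omega^c}|x-y|^{-d-2s}\,dy$ is bounded on $B_{2r_0}(0)$ because $\overline{B_{2r_0}(0)}$ stays away from $\partial\Omega$ are all correct. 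They reduce \eqref{boun} to the regional fractional Poincar\'e inequality
$$\|u\|_{L^2(\Omega)}^2\ \le\ C_P\int_\Omega\int_\Omega|u(x)-u(y)|^2\,d\nu,\qquad u\in\mathcal{C}^\infty_0(\Omega),$$
and you correctly identify this step as the crux.

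But this Poincar\'e inequality is not a ``standard'' fact to be invoked: it fails whenever $s\le\tfrac12$. For a bounded Lipschitz domain and $s\le\tfrac12$, $\mathcal{C}^\infty_0(\Omega)$ is dense in $H^s(\Omega)$ endowed with the norm $\|\cdot\|_{L^2(\Omega)}+\big(\int_\Omega\int_\Omega|\cdot(x)-\cdot(y)|^2\,d\nu\big)^{1/2}$ (Lions--Magenes, Grisvard). Taking $u_n\in\mathcal{C}^\infty_0(\Omega)$ with $u_n\to 1$ in this norm, the double integral tends to $0$ while $\|u_n\|_{L^2(\Omega)}\to|\Omega|^{1/2}>0$, so no finite constant $C_P$ can exist. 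Both of the routes you sketch to establish it collapse for the same reason: the compactness argument fails because the nonzero weak limit produced is a constant, which is admissible and has zero regional energy; and the distance-to-the-boundary Hardy inequality $\int_\Omega u^2\,d(\cdot,\partial\Omega)^{-2s}\,dx\le C\int_\Omega\int_\Omega|u(x)-u(y)|^2\,d\nu$ is known (Dyda) to hold on bounded Lipschitz domains only for $2s>1$ and to fail for $2s\le 1$ by the very same density. Moreover, by Fatou applied along the same sequence $u_n\to 1$, the inequality \eqref{boun} itself cannot hold for $s\le\tfrac12$ on $\mathcal{C}^\infty_0(\Omega)$, so this is not a flaw in your argument alone but an issue with the statement as reproduced here (the result in \cite{ABE} must carry some restriction, such as $s>\tfrac12$, that was dropped). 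Your argument \emph{does} complete the proof in the range $s>\tfrac12$, once the Dyda boundary Hardy inequality (or, equivalently, the Poincar\'e inequality derived from it) is explicitly invoked; for $s\le\tfrac12$ the key step is unavailable and no proof can be given.
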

Since we are considering a problem with mixed boundary condition we need to specify the space where the solutions belong.
\begin{Definition}\label{def1}
Let $\O$ be a bounded domain of $\RR^d$ and $D\subset\mathbb{R}^d\setminus\Omega$ an open set. For $0<s<1$, we define the space
$$\mathbb{E}^{s}(\Omega,D)=\left\{ u \in H^s(\RR^d):\, u=0 \text{ in } D\right\}$$
which  is a Hilbert space endowed with the norm induced by $H^s(\RR^d)$.
\end{Definition}
For $u\in\mathbb{E}^{s}(\Omega,D)$, we set

$$\|u\|^{2}=\dfrac{a_{d,s}}{2}\int\int_{\mathcal{D}_{\Omega}}|u(x)-u(y)|^{2}\,d\nu.$$
The properties of this norm are described by the following result. We refer to \cite{drv}, \cite{BaM} and \cite{LMPAS} for the proof and other properties of this
space.
\begin{Proposition}\label{norm eqv}
The norm $\|\,.\, \|$ in $\mathbb{E}^{s}(\Omega,D)$ is equivalent to the one induced by $H^s(\re^d)$, and then $(\mathbb{E}^{s}(\O,D), \langle\, ,\,\rangle)$ is a
Hilbert space with scalar product given by
$$\langle u,v\rangle=\dfrac{a_{d,s}}{2}\int\int\limits_{\mathcal{D}_{\Omega}}(u(x)-u(y))(v(x)-v(y))\,d\nu, $$ moreover there exists a positive constant $C(\O)$
such that  the next Poincaré inequality holds:
\begin{equation}\label{poincare}
C(\O)\int_{\Omega} u^2(x) dx \le \int\int\limits_{\mathcal{D}_{\Omega}}(u(x)-u(y))^2\,d\nu \mbox{  for all  }u\in \mathbb{E}^{s}(\Omega,D).
\end{equation}
\end{Proposition}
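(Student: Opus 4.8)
The plan is to prove Proposition~\ref{norm eqv} in three steps: first the two-sided comparison of $\|\cdot\|$ with the full $H^s(\re^d)$ seminorm on $\mathbb{E}^{s}(\Omega,D)$, then the Poincar\'e inequality \eqref{poincare}, and finally assembling these to conclude that $\mathbb{E}^{s}(\Omega,D)$ is a Hilbert space with the stated scalar product. The polarization identity shows $\langle u,v\rangle$ is a genuine inner product as soon as $\|\cdot\|$ is a norm equivalent to the ambient one, so the real content is in the inequalities.

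First I would address the norm equivalence. The inequality $\|u\|^2 \le a_{d,s}\int\int_{\re^d\times\re^d}|u(x)-u(y)|^2\,d\nu$ is immediate, since $\mathcal{D}_{\Omega}\subset \re^d\times\re^d$. For the reverse direction one must control the ``$\Omega^c\times\Omega^c$'' piece of the double integral by the remaining part together with the $L^2(\Omega)$-norm; but this is exactly where the vanishing condition $u=0$ in $D$ enters. Writing $\re^d\setminus\Omega = (\text{open set }N)\cup(\text{open set }D)$ up to the boundary, for $x,y\in\Omega^c$ with, say, $x\in D$ one has $u(x)=0$, so $|u(x)-u(y)|^2=|u(y)|^2$ and $\int_D\int_{\Omega^c}|u(y)|^2\,d\nu \le C\int_D\int_{\re^d}\frac{dx}{|x-y|^{d+2s}}\,|u(y)|^2\,dy$; the inner integral is finite because $d+2s>d$ and $x$ ranges away from $y$ for $y\in D$... the delicate case is $y$ near $\partial D$. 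This is where I would invoke the existing literature: the references \cite{drv}, \cite{BaM}, \cite{LMPAS} cited right before the statement establish precisely this comparison, so I would quote it rather than reprove it, and combine it with the regional Hardy/Poincar\'e estimates of Propositions~\ref{cor} to absorb the $L^2(\Omega)$ term.

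Next, for the Poincar\'e inequality \eqref{poincare}, the natural route is a compactness/contradiction argument. Suppose it fails; then there is a sequence $u_n\in\mathbb{E}^{s}(\Omega,D)$ with $\int_\Omega u_n^2 = 1$ and $\int\int_{\mathcal{D}_\Omega}(u_n(x)-u_n(y))^2\,d\nu\to 0$. By the norm equivalence just proved, $(u_n)$ is bounded in $H^s(\re^d)$, hence (up to subsequence) $u_n\weakly u$ weakly in $H^s$ and, by the compact embedding $H^s(\re^d)\hookrightarrow L^2_{loc}(\re^d)$ on a large ball containing $\overline{\Omega}\cup\overline{D_{loc}}$, strongly in $L^2(\Omega)$, so $\int_\Omega u^2=1$; also $u=0$ in $D$ by the strong convergence. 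Lower semicontinuity of the Gagliardo seminorm gives $\int\int_{\mathcal{D}_\Omega}(u(x)-u(y))^2\,d\nu=0$, which forces $u$ to be constant on $\re^d$ in the sense that all the differences appearing in $\mathcal{D}_\Omega$ vanish; since $u=0$ on the nonempty open set $D$, this constant is $0$, contradicting $\int_\Omega u^2 = 1$. The main obstacle here is making the last implication rigorous: $\mathcal{D}_\Omega$ omits $\Omega^c\times\Omega^c$, so vanishing of the seminorm over $\mathcal{D}_\Omega$ does not literally say $u$ is globally constant — one must argue that it forces $u$ constant on $\Omega\cup D$ (connecting through pairs with one point in $\Omega$), and then the boundary condition on $D$ kills it.

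Finally, completeness: if $(u_n)$ is Cauchy for $\|\cdot\|$, then by the equivalence it is Cauchy in $H^s(\re^d)$, hence converges to some $u\in H^s(\re^d)$; the condition $u=0$ in $D$ passes to the limit (it is a closed condition, e.g.\ by $L^2$-convergence on a neighborhood), so $u\in\mathbb{E}^{s}(\Omega,D)$ and $\|u_n-u\|\to 0$. Thus $(\mathbb{E}^{s}(\Omega,D),\langle\cdot,\cdot\rangle)$ is a Hilbert space, completing the proof. I expect the norm-equivalence step (specifically controlling the $\Omega^c\times\Omega^c$ contribution near $\partial D$) to be the technically heaviest point, which is why leaning on the cited references for that estimate is the cleanest strategy.
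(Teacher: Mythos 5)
The paper itself does not prove Proposition~\ref{norm eqv}: immediately before the statement it writes ``We refer to \cite{drv}, \cite{BaM} and \cite{LMPAS} for the proof and other properties of this space,'' and no argument is given. So there is nothing in the paper to compare your sketch against; what you supply is genuinely more than the paper provides. On its own terms, though, your sketch has a structural problem and two smaller slips worth noting.

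The structural problem is a near-circularity between your Steps~1 and~2. ``Norm equivalence'' in the sense claimed — a two-sided comparison between $\|\cdot\|$ and the full $H^s(\re^d)$ norm, which includes $\|u\|_{L^2(\re^d)}$ — already encodes a Poincar\'e-type bound: if $\|u\|^2_{H^s(\re^d)}\le C\|u\|^2$ then trivially $\int_\Omega u^2\le\|u\|_{L^2(\re^d)}^2\le C\|u\|^2$, which is exactly \eqref{poincare}. So once you have accepted Step~1 from the references, Step~2 follows in one line and your entire compactness/contradiction argument is redundant. Conversely, if you intend the compactness argument to genuinely \emph{establish} Poincar\'e (as one ingredient in proving the norm equivalence, which is how such results are usually structured), you cannot start it by ``by the norm equivalence just proved, $(u_n)$ is bounded in $H^s(\re^d)$''; you would instead need to use only the bound on the $\mathcal{D}_\Omega$-seminorm together with local compactness of the Gagliardo seminorm, which is a different (and doable, but not what you wrote) argument. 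You should pick one ordering and make the logical dependence explicit.

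Two smaller points. First, your proposal to ``combine it with the regional Hardy/Poincar\'e estimates of Propositions~\ref{cor} to absorb the $L^2(\Omega)$ term'' misuses Proposition~\ref{cor}: that is a regional Hardy inequality with the singular weight $|x|^{-2s}$ for $u\in\mathcal{C}^\infty_0(\Omega)$, i.e.\ functions compactly supported in $\Omega$; it does not apply to generic $u\in\mathbb{E}^s(\Omega,D)$, which are nonzero on $N$, nor is it an $L^2$ Poincar\'e estimate. Second, in the constancy step: vanishing of the seminorm over $\mathcal{D}_\Omega$ actually forces $u$ to be a.e.\ constant on \emph{all} of $\re^d$, not merely on $\Omega\cup D$ — since $\Omega\times\re^d\subset\mathcal{D}_\Omega$, fixing any Lebesgue point $x_0\in\Omega$ gives $u(y)=u(x_0)$ for a.e.\ $y\in\re^d$, and then $u\equiv 0$ because $u=0$ on the nonempty open set $D$. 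Your conclusion is right, but the route ``constant on $\Omega\cup D$ by connecting through pairs'' is unnecessarily complicated and slightly misleading. Finally, you correctly identify that the genuinely hard estimate is the $\Omega^c\times\Omega^c$ contribution near $\partial D\cap\partial N$; since you delegate exactly that to \cite{drv}, \cite{BaM}, \cite{LMPAS}, your proof ultimately reduces, like the paper's non-proof, to those references.
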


 As a consequence of the definition of $\mathbb{E}^{s}(\Omega,D)$ and using the extension result proved in  \cite{drv}, we get the next Sobolev inequality in the
 space $\mathbb{E}^{s}(\Omega,D)$.
\begin{Proposition}\label{sobolev} Let $(D,N)$ be a D-N configuration.
Suppose that $s\in (0,1)$ and $d>2s$. There exists a positive constant $S(N)>0$ such that, for all $u \in \mathbb{E}^{s}(\Omega,D)$, we have
$$
S(N)\|u\|^2_{L^{2^*_s}(\O)}\leq \|u\|^{2}.
$$
\end{Proposition}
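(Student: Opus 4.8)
The plan is to deduce the Sobolev inequality in $\mathbb{E}^{s}(\Omega,D)$ from the global Sobolev inequality of Proposition~\ref{PSO} together with the equivalence of norms established in Proposition~\ref{norm eqv}. The point is that $\mathbb{E}^{s}(\Omega,D)$ sits inside $H^s(\RR^d)$, so any $u\in\mathbb{E}^{s}(\Omega,D)$ is a genuine function on all of $\RR^d$ to which \eqref{sobo00} applies directly; the only work is to control the \emph{full} Gagliardo energy $\frac{a_{d,s}}{2}\int\int_{\RR^d\times\RR^d}|u(x)-u(y)|^2\,d\nu$ by the \emph{restricted} energy $\|u\|^2=\frac{a_{d,s}}{2}\int\int_{\mathcal{D}_\Omega}|u(x)-u(y)|^2\,d\nu$, and to pass from the $L^{2^*_s}(\RR^d)$ norm on the left to the $L^{2^*_s}(\Omega)$ norm.

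First I would observe that trivially $\|u\|_{L^{2^*_s}(\Omega)}\le\|u\|_{L^{2^*_s}(\RR^d)}$, so by Proposition~\ref{PSO},
\[
S\,\|u\|_{L^{2^*_s}(\Omega)}^2\le S\,\|u\|_{L^{2^*_s}(\RR^d)}^2\le \frac{a_{d,s}}{2}\int\int_{\RR^d\times\RR^d}|u(x)-u(y)|^2\,d\nu.
\]
It then remains to bound the right-hand side by a constant (depending on $N$, i.e. on the configuration, through $\Omega$ and $D$) times $\|u\|^2$. Since $\mathcal{D}_\Omega=(\RR^d\times\RR^d)\setminus(\Omega^c\times\Omega^c)$, the difference between the two energies is exactly the integral over $\Omega^c\times\Omega^c$, i.e.
\[
\frac{a_{d,s}}{2}\int\int_{\RR^d\times\RR^d}|u(x)-u(y)|^2\,d\nu-\|u\|^2=\frac{a_{d,s}}{2}\int_{\Omega^c}\int_{\Omega^c}|u(x)-u(y)|^2\,d\nu.
\]
Here one uses that $u=0$ on $D$: the extension result of \cite{drv} (invoked in the sentence preceding the statement) produces, for $u\in\mathbb{E}^{s}(\Omega,D)$, control of $u$ and of its Gagliardo seminorm over the complement of $\Omega$ by $\|u\|$ alone. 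Concretely, the norm equivalence of Proposition~\ref{norm eqv} gives $\|u\|^2_{H^s(\RR^d)}\le C(\Omega)\|u\|^2$, and $\frac{a_{d,s}}{2}\int\int_{\Omega^c\times\Omega^c}|u(x)-u(y)|^2\,d\nu\le \frac{a_{d,s}}{2}\int\int_{\RR^d\times\RR^d}|u(x)-u(y)|^2\,d\nu\le\|u\|^2_{H^s(\RR^d)}$. Chaining these,
\[
\frac{a_{d,s}}{2}\int\int_{\RR^d\times\RR^d}|u(x)-u(y)|^2\,d\nu\le \|u\|^2+\|u\|^2_{H^s(\RR^d)}\le\bigl(1+C(\Omega)\bigr)\|u\|^2,
\]
and combining with the displayed Sobolev bound yields the claim with $S(N)=S/(1+C(\Omega))$.

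The main subtlety, and the step I would be most careful about, is the passage through the norm equivalence: a priori $\|u\|^2_{H^s(\RR^d)}$ contains the $L^2(\RR^d)$ mass of $u$, which is \emph{not} obviously controlled by the seminorm $\|u\|$ unless one genuinely uses that $u$ vanishes on the nonempty open set $D$ (otherwise constants would be a counterexample, matching the remark in the introduction that the Neumann Hardy constant is $0$). This is precisely the content of Proposition~\ref{norm eqv}, which is cited as already proved in \cite{drv, BaM, LMPAS}; so in the write-up I would simply invoke it rather than reprove the Poincar\'e-type control. One should also note that $N$ may be unbounded, so the constant $S(N)$ genuinely depends on the configuration and no compactness of $\Omega^c$ is used — only the two functional inequalities already in hand. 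With those two inputs, the argument is the short chain of inequalities above.
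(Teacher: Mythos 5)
Your proof is correct and in substance follows the same route the paper intends: the paper's one-line proof cites the extension result of \cite{drv}, and the normalization of that result as used here is exactly the content of Proposition~\ref{norm eqv}, which you invoke. The only (minor) inefficiency is the detour through the decomposition
$\frac{a_{d,s}}{2}\int\int_{\RR^d\times\RR^d}|u(x)-u(y)|^2\,d\nu=\|u\|^2+\frac{a_{d,s}}{2}\int_{\Omega^c}\int_{\Omega^c}|u(x)-u(y)|^2\,d\nu$
followed by bounding the $\Omega^c\times\Omega^c$ piece by the full Gagliardo energy, which is the quantity you are trying to bound in the first place; since Proposition~\ref{norm eqv} already gives $\frac{a_{d,s}}{2}\int\int_{\RR^d\times\RR^d}|u(x)-u(y)|^2\,d\nu\le\|u\|^2_{H^s(\RR^d)}\le C\,\|u\|^2$ directly, you can skip the decomposition and obtain $S(N)=S/C$. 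You are also right to flag that the constant in Proposition~\ref{norm eqv} depends on the D-N configuration (on $D$ as well as $\Omega$), not just $\Omega$, even though both the paper and your write-up denote it $C(\Omega)$; nothing is lost, but the notation underreports the dependence.
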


The following result justifies the choice of nonlocal \textit{boundary} condition.
\begin{Proposition}\label{BYP} Let $(D,N)$ be a D-N configuration and  $s \in (0,1)$ then for all $u,\,v \in \mathbb{E}^{s}(\Omega,D)$ we have,
\begin{equation}\label{bpa}\int_{\O}v(-\Delta)^su\,dx=
\frac{a_{d,s}}{2}\int\int\limits_{\mathcal{D}_{\Omega}}
     (u(x)-u(y))(v(x)-v(y))\,d\nu -\Int_{N}v\mathcal{N}_su\,dx.
     \end{equation}
\end{Proposition}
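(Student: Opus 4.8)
The plan is to unfold both sides of \eqref{bpa} into absolutely convergent double Lebesgue integrals and to reorganise them by Fubini's theorem, keeping the interaction of $\Omega$ with itself apart from the interaction of $\Omega$ with $\Omega^c$; the symmetry of the kernel $d\nu$ and of the bilinear form, together with the definition \eqref{Nmn def} of $\mathcal{N}_{s}$, will then reproduce \eqref{bpa} once the roles of $D$ and $N$ are accounted for. First I would record that everything in sight is absolutely integrable: since $u,v\in H^s(\RR^d)$, Cauchy--Schwarz in $L^2(\RR^d\x\RR^d,d\nu)$ bounds $\int\int_{\mathcal{D}_{\Omega}}|u(x)-u(y)|\,|v(x)-v(y)|\,d\nu$ by $\frac{2}{a_{d,s}}\|u\|\,\|v\|<\iy$, so every rearrangement below is legitimate, while the finiteness of $\int_N v\,\mathcal{N}_{s} u\,dx$ is part of the mapping properties of $\mathcal{N}_{s}$ established in \cite{drv} and \cite{BaM}.

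The first step would be the splitting $\RR^d\x\RR^d=(\Omega\x\Omega)\cup(\Omega\x\Omega^c)\cup(\Omega^c\x\Omega)\cup(\Omega^c\x\Omega^c)$, in which, by definition, $\mathcal{D}_{\Omega}$ is the union of the first three pieces. Since the integrand $(u(x)-u(y))(v(x)-v(y))\,d\nu$ is invariant under $(x,y)\mapsto(y,x)$, the pieces $\Omega\x\Omega^c$ and $\Omega^c\x\Omega$ contribute equally, so
$$\frac{a_{d,s}}{2}\int\int_{\mathcal{D}_{\Omega}}(u(x)-u(y))(v(x)-v(y))\,d\nu=\frac{a_{d,s}}{2}\int_\Omega\int_\Omega(u(x)-u(y))(v(x)-v(y))\,d\nu+a_{d,s}\int_\Omega\int_{\Omega^c}(u(x)-u(y))(v(x)-v(y))\,d\nu.$$
On the $\Omega\x\Omega$ term I would use the standard relabelling $x\leftrightarrow y$ inside $\int_\Omega\int_\Omega v(x)\frac{u(x)-u(y)}{|x-y|^{d+2s}}$ and average, so that $\int_\Omega v(x)\,\text{P.V.}\!\int_\Omega\frac{u(x)-u(y)}{|x-y|^{d+2s}}dy\,dx=\frac12\int_\Omega\int_\Omega(u(x)-u(y))(v(x)-v(y))\,d\nu$; splitting $\text{P.V.}\!\int_{\RR^d}=\text{P.V.}\!\int_\Omega+\int_{\Omega^c}$ in the definition \eqref{frac lap} of $(-\Delta)^s$ then gives
$$\int_\Omega v\,(-\Delta)^s u\,dx=\frac{a_{d,s}}{2}\int_\Omega\int_\Omega(u(x)-u(y))(v(x)-v(y))\,d\nu+a_{d,s}\int_\Omega\int_{\Omega^c}\frac{(u(x)-u(y))v(x)}{|x-y|^{d+2s}}\,dy\,dx.$$

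The second step would handle the remaining half of the $\Omega\x\Omega^c$ interaction: writing $(u(x)-u(y))(v(x)-v(y))=(u(x)-u(y))v(x)-(u(x)-u(y))v(y)$ in the last integral of the first display, the first summand is exactly the cross term of the second display, while for the second summand Fubini and \eqref{Nmn def} give $a_{d,s}\int_\Omega\int_{\Omega^c}\frac{(u(x)-u(y))v(y)}{|x-y|^{d+2s}}\,dy\,dx=-\int_{\Omega^c}v(y)\,\mathcal{N}_{s} u(y)\,dy$. Here the D-N configuration enters: $v\in\mathbb{E}^{s}(\Omega,D)$ vanishes in $D$, and since $N\cap D=\emptyset$ and $\overline N\cup\overline D=\Omega^c$, the set $\Omega^c$ agrees with $N\cup D$ up to a set of zero Lebesgue measure, so the last integral equals $-\int_N v\,\mathcal{N}_{s} u\,dx$. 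Adding the two steps yields $\frac{a_{d,s}}{2}\int\int_{\mathcal{D}_{\Omega}}(u(x)-u(y))(v(x)-v(y))\,d\nu=\int_\Omega v\,(-\Delta)^s u\,dx+\int_N v\,\mathcal{N}_{s} u\,dx$, which is \eqref{bpa}.

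The main obstacle is not the algebra but the manipulation of the principal value in $(-\Delta)^s u$, together with the very meaning of $\int_\Omega v\,(-\Delta)^s u$ for a general $u\in\mathbb{E}^{s}(\Omega,D)$, for which $(-\Delta)^s u$ need not be a function on $\Omega$. I would deal with this by proving the identity first for $u$ regular enough that $(-\Delta)^s u$ is defined pointwise and bounded on $\Omega$ --- for instance $u\in\mathcal{C}^\infty_0(\RR^d)$ with $u=0$ in $D$, or $u$ with $(-\Delta)^s u\in L^2(\Omega)$ --- by a symmetric truncation $\{|x-y|>\varepsilon\}$ followed by dominated convergence, the error near the diagonal being controlled exactly by the $H^s$ bound of the preliminary step; and then extending to all $u,v\in\mathbb{E}^{s}(\Omega,D)$ by density, using the continuity of both sides of \eqref{bpa} in the norm $\|\cdot\|$. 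This is precisely the scheme of \cite{drv} (see also \cite{BaM}); the only genuinely new ingredient here is the bookkeeping of $D$ and $N$, which is taken care of by the vanishing of $v$ in $D$.
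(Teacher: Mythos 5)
Your computation is correct: the symmetrization over $\Omega\times\Omega$, the splitting of the cross terms $\Omega\times\Omega^c$, the identification $a_{d,s}\int_\Omega\frac{u(x)-u(y)}{|x-y|^{d+2s}}\,dx=-\mathcal{N}_s u(y)$ for $y\in\Omega^c$, and the final reduction $\int_{\Omega^c}=\int_N$ using $v=0$ on $D$ all check out, including the signs. The paper itself only cites Lemma~3.3 of \cite{drv} rather than writing out a proof; what you have done is reproduce the underlying computation of that lemma and then observe, exactly as the paper intends, that the vanishing of $v$ on $D$ restricts the Neumann term to $N$, so this is the same approach made explicit rather than a different one.
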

 The proof of this result follows by the application of the integration by parts formula given in  Lemma 3.3 of \cite{drv}.

In order to obtain some a priori estimates, we will use the next Picone type inequality that is a extension of the corresponding inequality in $H^s_0(\O)$ obtained
in \cite{lenoperal}. For the reader convenience we give the proof.
\begin{Theorem}\label{piconeth}Let $(D,N)$ be a D-N configuration.
Consider $u,\, v \in \mathbb{E}^{s}(\Omega,D)$ with $u\ge 0$ in $\re^d$ and $u>0$ in $\O\cup N$. Assume that $(-\Delta)^{s}u\geq 0$ is a bounded Radon measure in
$\O$, then
\begin{equation}\label{picone0}
\int_{N}\frac{|v|^2}{u}\mathcal{N}_su\,dx+\int_{\O}\frac{|v|^2}{u}(-\Delta)^{s}u\, dx \leq \dfrac{a_{d,s}}{2}\int\int_{\mathcal{D}_{\O}}(v(x)-v(y))^{2}\,d\nu.
\end{equation}
In particular, if we have equality in \eqref{picone0}, then there exists a constant $C$ such that $v=C u$ in $\re^d$.
\end{Theorem}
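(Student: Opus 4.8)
The plan is to deduce \eqref{picone0} from a pointwise algebraic inequality, integrate it against the kernel $d\nu$, and identify the resulting bilinear form with the left‑hand side by means of the integration by parts formula of Proposition~\ref{BYP}.

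\emph{Reduction and algebraic inequality.} Since $v=0$ in $D$ we also have $|v|\in\mathbb{E}^{s}(\Omega,D)$, and $\big||v(x)|-|v(y)|\big|\le|v(x)-v(y)|$, so the left‑hand side of \eqref{picone0} is unchanged when $v$ is replaced by $|v|$ while the right‑hand side does not increase; hence we may assume $v\ge0$. The algebraic core is the elementary identity, valid for $a,b>0$ and $\alpha,\beta\ge0$,
\[
(a-b)\Big(\frac{\alpha^{2}}{a}-\frac{\beta^{2}}{b}\Big)=(\alpha-\beta)^{2}-\frac{(a\beta-b\alpha)^{2}}{ab}\le(\alpha-\beta)^{2},
\]
with equality if and only if $a\beta=b\alpha$; it is verified by expanding.

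\emph{Regularization and the uniform estimate.} For $\eps>0$ and $k\in\NN$ set $v_{k}=\min\{v,k\}$ and $w_{k,\eps}=\dfrac{v_{k}^{2}}{u+\eps}$. Since $v$, hence $v_{k}$, vanishes in $D$, so does $w_{k,\eps}$; it is bounded, and using the boundedness of $v_{k}$ and of $\tfrac1{u+\eps}$ one checks that $w_{k,\eps}\in\mathbb{E}^{s}(\Omega,D)$. Applying Proposition~\ref{BYP} to the pair $(u,w_{k,\eps})$ gives
\[
\int_{\O}w_{k,\eps}(-\Delta)^{s}u\,dx+\int_{N}w_{k,\eps}\,\mathcal{N}_{s}u\,dx=\frac{a_{d,s}}{2}\int\int_{\mathcal{D}_{\O}}\big(u(x)-u(y)\big)\big(w_{k,\eps}(x)-w_{k,\eps}(y)\big)\,d\nu .
\]
Now apply the algebraic inequality with $a=u(x)+\eps$, $b=u(y)+\eps$, $\alpha=v_{k}(x)$, $\beta=v_{k}(y)$; since $a-b=u(x)-u(y)$, the integrand on the right is bounded above by $(v_{k}(x)-v_{k}(y))^{2}\le(v(x)-v(y))^{2}$. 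Hence, for all $k,\eps$,
\[
\int_{\O}\frac{v_{k}^{2}}{u+\eps}(-\Delta)^{s}u\,dx+\int_{N}\frac{v_{k}^{2}}{u+\eps}\,\mathcal{N}_{s}u\,dx\le\frac{a_{d,s}}{2}\int\int_{\mathcal{D}_{\O}}(v(x)-v(y))^{2}\,d\nu .
\]

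\emph{Passage to the limit and equality.} Letting $\eps\downarrow0$ and then $k\uparrow\infty$, the factor $\tfrac{v_{k}^{2}}{u+\eps}$ increases to $\tfrac{v^{2}}{u}$; since $(-\Delta)^{s}u$ is a nonnegative measure, the first integral converges to $\int_{\O}\tfrac{v^{2}}{u}(-\Delta)^{s}u\,dx$ by monotone convergence. For the Neumann term one splits $\mathcal{N}_{s}u=(\mathcal{N}_{s}u)^{+}-(\mathcal{N}_{s}u)^{-}$ and uses monotone convergence on each part; to control the negative part (recall $N$ may be unbounded and $\mathcal{N}_{s}u$ has no a priori sign) one first establishes \eqref{picone0} for $v$ with compact support in $\O\cup N$ — there the lower semicontinuity of the $s$-superharmonic function $u$ gives $u\ge c>0$ on $\operatorname{supp}v$, so all integrands are genuinely $L^{1}$ — and then removes this restriction by a final monotone approximation of $v$. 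For the equality case: if \eqref{picone0} holds with equality, then tracing back through the limit the algebraic inequality must be an equality $d\nu$-a.e.\ on $\mathcal{D}_{\O}$, i.e.\ $u(y)v(x)=u(x)v(y)$ a.e.; since $\mathcal{D}_{\O}\supset\O\times\O$ this forces $v/u$ to be a.e.\ constant on $\O$ and then on $\O\cup N$, while equality also requires $\big||v(x)|-|v(y)|\big|=|v(x)-v(y)|$ a.e., so $v$ does not change sign. As $u=v=0$ on $D$ and $\re^{d}\setminus(\O\cup N\cup D)$ is negligible, we conclude $v=Cu$ in $\re^{d}$.

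\emph{Main difficulty.} The genuinely delicate point is the limit passage in the nonlocal Neumann integral $\int_{N}\tfrac{v^{2}}{u}\mathcal{N}_{s}u\,dx$: the set $N$ can be unbounded, $\mathcal{N}_{s}u$ need not have a sign and may be singular near $\partial\O$, and one must simultaneously make sense of the pairing of the bounded function $w_{k,\eps}$ with the measure $(-\Delta)^{s}u$ through its precise (quasicontinuous) representative. The algebraic inequality and the formal integration by parts are, by contrast, routine.
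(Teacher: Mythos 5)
Your proof follows the same route as the paper: your algebraic identity $(a-b)\bigl(\tfrac{\alpha^2}{a}-\tfrac{\beta^2}{b}\bigr)=(\alpha-\beta)^2-\tfrac{(a\beta-b\alpha)^2}{ab}$ is exactly a rearrangement of the paper's pointwise identity, which is then integrated against $d\nu$ and matched to the left-hand side of \eqref{picone0} via the integration-by-parts formula of Proposition~\ref{BYP} applied to the pair $(u,v^2/u)$. The truncation and regularization $w_{k,\eps}=v_k^2/(u+\eps)$ is a refinement the paper's terse proof omits — it guarantees the test function lies in $\mathbb{E}^s(\Omega,D)$ and removes the $0/0$ indeterminacy where $u=v=0$ on $D$ — but the essential argument, and the treatment of the equality case, are the same.
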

\begin{proof}
Notice that for $u, v$ as in the pointwise hypotheses of the Theorem we have the following simple identity
$$
(v(x)-v(y))^{2}-\big(\frac{v^2(x)}{u(x)}-\frac{v^2(y)}{u(y)}\big)\big(u(x)-u(y)\big)=
\bigg(v(x)\bigg(\frac{u(y)}{u(x)}\bigg)^{\frac12}-v(y)\bigg(\frac{u(x)}{u(y)}\bigg)^{\frac12}\bigg)^2.
$$
Integrating the previous identity respect to  $d\nu$, defined in\eqref{kernel},  we conclude.

Finally, if we have the  equality in \eqref{picone0}, then we conclude that
$$
\int\int_{\mathcal{D}_{\O}}\bigg(v(x)\bigg(\frac{u(y)}{u(x)}\bigg)^{\frac12}-v(y)\bigg(\frac{u(x)}{u(y)}\bigg)^{\frac12}\bigg)^2d\nu=0
$$
Thus $v(x)\bigg(\dfrac{u(y)}{u(x)}\bigg)^{\frac12}=v(y)\bigg(\dfrac{u(x)}{u(y)}\bigg)^{\frac 12}$ for almost all $(x,y)\in \mathcal{D}_{\O}$. In particular, if $v(y_0)\neq 0$ for some $y_0\in \O\cup
N$, then  $\dfrac{v(x)}{v(y_0)}=\dfrac{u(x)}{u(y_0)}$. Thus $v(x)=\dfrac{v(y_0)}{u(y_0)}u(x)$ and the result follows.
\end{proof}

\section{Analysis of the mixed Hardy optimal constant}\label{sec:Hardy}
Consider $\Omega\subset \mathbb{R}^d$ a bounded domain and $D, N\subset\mathbb{R}^d\setminus\Omega$  a D-N configuration. In this section we will analyze the condition for
the attainability of the \textit{mixed Hardy constant} defined by
\begin{equation}\label{hardymix}
\Lambda_{N}\equiv\Lambda_{N}(\O)=\inf_{\{\phi\in \mathbb{E}^{s}(\Omega, D), \phi\neq
0\}}\dfrac{\dfrac{a_{d,s}}{2}\displaystyle\int\int_{\mathcal{D}_{\Omega}}|\phi(x)-\phi(y)|^{2}\,d\nu}{\displaystyle\io \frac{\phi^2}{|x|^{2s}}\,dx}.
\end{equation}
We start by proving the following result.
\begin{Theorem}
Assume that $\O\subset\re^d$ is a smooth bounded domain with $0\in\O$. Let $N$, $D$ be two nonempty open sets of $\re^{d}\backslash\Omega$ such that $N \cap
D=\emptyset$ and $\overline{N}\cup \overline{D}= \re^{d}\backslash\Omega$, then
$$0<\Lambda_{N}\le\Lambda.$$
\end{Theorem}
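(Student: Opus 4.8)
The plan is to establish the two inequalities separately, both as consequences of the classical fractional Hardy inequality \eqref{Hardyint}--\eqref{hardy} and of the functional framework of Section~\ref{prelim}. First I would observe the inclusion $\mathcal{C}_0^\infty(\O)\subset\mathbb{E}^{s}(\O,D)$: every $\phi\in\mathcal{C}_0^\infty(\O)$ belongs to $H^s(\RR^d)$ and vanishes on $\RR^d\setminus\O\supset D$. For such a $\phi$ one has $\phi\equiv0$ on $\O^c$, so $|\phi(x)-\phi(y)|^2$ vanishes on $\O^c\times\O^c$; hence the numerator of the quotient in \eqref{hardymix} equals $\frac{a_{d,s}}{2}\int\int_{\RR^d\times\RR^d}|\phi(x)-\phi(y)|^2\,d\nu$, and its denominator equals $\int_{\RR^d}\frac{\phi^2}{|x|^{2s}}\,dx$, which is finite and positive because $0\in\O$ and $2s<d$. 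Since, as recalled right after \eqref{hardy}, $\Lambda$ is the optimal constant in that inequality over $\mathcal{C}_0^\infty(\O)$ — equivalently, the infimum in \eqref{hardymix} restricted to $\mathcal{C}_0^\infty(\O)$ equals $\Lambda$ — and enlarging the class of test functions can only lower the infimum, we get $\Lambda_N\le\Lambda$; in particular $\Lambda_N<\infty$.

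For the lower bound the point is to dominate $\io\frac{\phi^2}{|x|^{2s}}\,dx$ by $\|\phi\|^2$ for every $\phi\in\mathbb{E}^{s}(\O,D)$. Since $\phi\in H^s(\RR^d)$, the global Hardy inequality \eqref{Hardyint} (which extends from $\mathcal{C}_0^\infty(\RR^d)$ to $H^s(\RR^d)$ by density and Fatou) gives
\begin{equation*}
\io\frac{\phi^2}{|x|^{2s}}\,dx\le\int_{\RR^d}\frac{\phi^2}{|x|^{2s}}\,dx\le\frac{1}{\Lambda}\cdot\frac{a_{d,s}}{2}\int\int_{\RR^d\times\RR^d}|\phi(x)-\phi(y)|^2\,d\nu\le\frac{1}{\Lambda}\,\|\phi\|^2_{H^s(\RR^d)}.
\end{equation*}
By Proposition~\ref{norm eqv} the norm $\|\cdot\|$ is equivalent on $\mathbb{E}^{s}(\O,D)$ to the one induced by $H^s(\RR^d)$, so $\|\phi\|^2_{H^s(\RR^d)}\le C_0\,\|\phi\|^2$ for a constant $C_0=C_0(\O,N,D,s,d)>0$; combining,
\begin{equation*}
\io\frac{\phi^2}{|x|^{2s}}\,dx\le\frac{C_0}{\Lambda}\,\|\phi\|^2=\frac{C_0}{\Lambda}\cdot\frac{a_{d,s}}{2}\int\int_{\mathcal{D}_{\O}}|\phi(x)-\phi(y)|^2\,d\nu\qquad\text{for all }\phi\in\mathbb{E}^{s}(\O,D).
\end{equation*}
Hence the Rayleigh quotient defining $\Lambda_N$ is bounded below by $\Lambda/C_0$, and therefore $\Lambda_N\ge\Lambda/C_0>0$.

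The only genuinely delicate point is that $\|\phi\|^2$ controls the Gagliardo integral over $\mathcal{D}_{\O}$ but not, a priori, the one over $\RR^d\times\RR^d$: the contribution of $\O^c\times\O^c$ is absent and need not be small when $N$ is unbounded, and it is exactly this gap that the norm equivalence of Proposition~\ref{norm eqv} bridges. If one wished to avoid invoking it, the lower bound could instead be obtained by localization: pick $\eta\in\mathcal{C}_0^\infty(\O)$ with $\eta\equiv1$ near $0$, apply \eqref{hardy} to $\eta\phi$ (which is supported in $\O$) to control $\int_{\{\eta=1\}}\frac{\phi^2}{|x|^{2s}}\,dx$, estimate $\int\int|(\eta\phi)(x)-(\eta\phi)(y)|^2\,d\nu$ by means of the elementary bound $\sup_{y\in\RR^d}\int_{\RR^d}\frac{|\eta(x)-\eta(y)|^2}{|x-y|^{d+2s}}\,dx<\infty$ together with the Poincar\'e inequality \eqref{poincare}, and bound the remaining region via $\int_{\O\setminus\{\eta=1\}}\frac{\phi^2}{|x|^{2s}}\,dx\le C\int_\O\phi^2\,dx$ using \eqref{poincare} once more.
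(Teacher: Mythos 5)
Your proof is correct. The upper bound $\Lambda_N\le\Lambda$ is handled the same way as in the paper: the inclusion of $\mathcal{C}_0^\infty(\O)$ (the paper uses $H^s_0(\O)$, which amounts to the same thing) into $\mathbb{E}^s(\O,D)$, together with the observation that for functions vanishing on $\O^c$ the restriction of the double integral to $\mathcal{D}_\O$ loses nothing. For the lower bound, however, you take a genuinely different and shorter route. The paper establishes $\io\frac{u^2}{|x|^{2s}}\,dx\le C(\O)\int\!\!\int_{\mathcal{D}_\O}(u(x)-u(y))^2\,d\nu$ by hand: it introduces a cut-off $\varphi$ supported near the origin, splits $u=\varphi u+(1-\varphi)u$, controls the far region by the Poincar\'e inequality \eqref{poincare}, and controls the near region by applying \eqref{hardy} to $\varphi u\in H^s_0(\O)$ and then bounding the resulting Gagliardo integral of $\varphi u$ by that of $u$ over $\mathcal{D}_\O$ through the product identity and three separate estimates ($J_1,J_2,J_3$). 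You bypass all of this by chaining the global Hardy inequality \eqref{Hardyint} on $H^s(\RR^d)$ (extended by density and Fatou) with the norm equivalence of Proposition~\ref{norm eqv}, which directly gives $\io\frac{\phi^2}{|x|^{2s}}\,dx\le\frac{C_0}{\Lambda}\,\|\phi\|^2$ and hence $\Lambda_N\ge\Lambda/C_0>0$. The trade-off is clear: your argument is two lines but leans entirely on Proposition~\ref{norm eqv} (which the paper states with references to \cite{drv}, \cite{BaM}, \cite{LMPAS} but does not prove), whereas the paper's localization argument is longer but self-contained, establishing the needed Hardy-type inequality directly from \eqref{hardy} and \eqref{poincare}. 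The alternative you sketch at the end — cut-off near the origin, apply \eqref{hardy} to $\eta\phi$, estimate the Gagliardo integral of the product, use Poincar\'e for the remainder — is essentially the paper's proof.
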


\begin{proof}
We begin by proving the positivity of $\L_N$. Let $u\in \mathbb{E}^{s}(\Omega, D)$ and fix $\d>0$ such that $B_{2\d}(0)\subset \Omega$. Consider $\varphi\in
\mathcal{C}^\infty_0(\O)$ be such that $0\le \varphi\le 1$, $\varphi\equiv 1$ in $B_{\d}(0)$ and $\varphi=0$ in $\O\backslash B_{2\d}(0)$. In what follows we
denote by $C$ or $C(\O)$ any positive constant that depends on $\O,d,s$, that is independent of $u$ and that can be change from line to line.

It is clear that $u=\varphi u+(1-\varphi)u$, thus
\begin{equation}\label{estim1}
\dyle\io \frac{u^2}{|x|^{2s}}\,dx= \dyle\io \frac{(u\varphi)^2}{|x|^{2s}}\,dx+ \io \frac{u^2(1-\varphi)^2}{|x|^{2s}}\,dx+2\dyle \io
\frac{u^2\varphi(1-\varphi)}{|x|^{2s}}\,dx.
\end{equation}
Since $1-\varphi=0$ in $B_\d(0)$, then using the Poincaré inequality we conclude that
\begin{equation}\label{estim2}
\io \frac{u^2(1-\varphi)^2}{|x|^{2s}}\,dx+2\dyle \io \frac{u^2\varphi(1-\varphi)}{|x|^{2s}}\,dx\le
C(\O)\displaystyle\int\int_{\mathcal{D}_{\Omega}}(u(x)-u(y))^{2}\,d\nu.
\end{equation}
We deal now with the term $\dyle\io \dfrac{(u\varphi)^2}{|x|^{2s}}\,dx$. Since $u\varphi\in H^s_0(\O)$, then by the Hardy inequality  \eqref{hardy}, we obtain that
\begin{equation}\label{estim3}
\io \frac{(u\varphi)^2}{|x|^{2s}}\,dx\le C(\O)\displaystyle\int_\Omega\int_{\Omega}\bigg((u\varphi)(x)-(u\varphi)(y)\bigg)^{2}\,d\nu.
\end{equation}
The immediate algebraic identity
$$
\begin{array}{lll}
& \bigg(u(x)\varphi(x)-u(y)\varphi(y)\bigg)^2=\bigg(u(x)-u(y)\bigg)^2\varphi^2(x)+u^2(y)\bigg(\varphi(x)-\varphi(y)\bigg)^2\\ &
+2u(y)\varphi(x)\bigg(u(x)-u(y)\bigg)\bigg(\varphi(x)-\varphi(y)\bigg),
\end{array}
$$
 implies that
$$
\begin{array}{lll}
&\dyle\int_\Omega\int_{\Omega}(u(x)\varphi(x)-u(y)\varphi(y))^{2}\,d\nu=\\ \\ & \dyle
\int_\Omega\int_{\Omega}(u(x)-u(y))^2\varphi^2(x)\,d\nu+\int_\Omega\int_{\Omega} u^2(y)(\varphi(x)-\varphi(y))^2\,d\nu\\ \\ &+2\dyle \int_\Omega\int_{\Omega}
u(y)\varphi(x)(u(x)-u(y))(\varphi(x)-\varphi(y))\,d\nu\\ \\ &=J_1+J_2+2J_3.
\end{array}
$$
In first place, it is clear that
$$
J_1\le C(\O)\int\int_{\mathcal{D}_{\O}}\big(u(x)-u(y)\big)^{2}\,d\nu.
$$
Respect to $J_2$, since $\Omega$ is a bounded domain, it holds that
$$J_2\le C(\O)\int_{\O}\int_\O \frac{u^2(y)}{|x-y|^{d+2s-2}}dxdy \le C(\O)\int_{\O}u^2(y) \,dy\int_{|\xi|\le R}\frac{1}{|\xi|^{d+2s-2}}\,d\xi.$$
Since $\dint_{|\xi|\le\e}\frac{1}{|\xi|^{d+2s-2}}\,d\xi<\infty$, using the Poincaré inequality in Proposition \ref{norm eqv}, we get
$$
J_2\le C(\O)\int_{\O}\int_\O u^2(y)dy \le C(\O)\int\int_{\mathcal{D}_{\O}}\big(u(x)-u(y)\big)^{2}\,d\nu.
$$
By using Young inequality, we reach that
$$
|J_3|\le C_1 J_1+C_2J_2\le C(\O)\int\int_{\mathcal{D}_{\O}}\big(u(x)-u(y)\big)^{2}\,d\nu.
$$
Therefore, combining the above inequalities, we conclude that
$$
\dyle\int_\Omega\int_{\Omega}(u(x)\varphi(x)-u(y)\varphi(y))^{2}\,d\nu \le C(\O)\int\int_{\mathcal{D}_{\O}}\big(u(x)-u(y)\big)^{2}\,d\nu.
$$
Going back to \eqref{estim1},  and by using the estimates \eqref{estim2}, \eqref{estim3}, we conclude that
$$
\dyle\io \frac{u^2}{|x|^{2s}}\,dx \le C(\O)\int\int_{\mathcal{D}_{\O}}\big(u(x)-u(y)\big)^{2}\,d\nu.
$$
Hence $\L_N>0$ and then the first affirmation follows.

 Finally, since $H^s_0(\O)\subset \mathbb{E}^{s}(\Omega, D)$,  by definition it follows  that $\Lambda_{N}\le\Lambda$.
\end{proof}

The main result  in this section is the following.
\begin{Theorem}\label{exist00}
Assume that $\O\subset\re^d$ is a smooth bounded domain with $0\in\O$. Let $N$, $D$ be two open sets of $\re^{d}\backslash\Omega$ such that $N \cap D=\emptyset$
and $\overline{N}\cup \overline{D}= \re^{d}\backslash\Omega$, then $\Lambda_{N}<\Lambda$ if and only if $\Lambda_N$ is attained.
\end{Theorem}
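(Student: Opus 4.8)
Recall that $0<\Lambda_N\le\Lambda$ by the theorem just proved, so the statement amounts to the dichotomy $\Lambda_N<\Lambda$ versus $\Lambda_N=\Lambda$, and the plan is to treat the two implications separately. The implication ``$\Lambda_N<\Lambda\Rightarrow\Lambda_N$ attained'' is a compactness statement for minimizing sequences; following the alternative approach alluded to in the introduction, rather than a full Concentration--Compactness decomposition I would derive it from the Hilbert structure of $\mathbb{E}^{s}(\Omega,D)$ together with a Brezis--Lieb splitting and a Hardy inequality localized at the pole $0$. The converse ``$\Lambda_N$ attained $\Rightarrow\Lambda_N<\Lambda$'' is a non-existence statement, which I would prove by contradiction using the Euler--Lagrange equation together with the sharp behavior of its positive solutions near the singularity of the Hardy potential.

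For the sufficiency, set $\|u\|^2=\frac{a_{d,s}}{2}\int\int_{\mathcal{D}_\Omega}|u(x)-u(y)|^2\,d\nu$ and take a minimizing sequence $\{\phi_n\}$ with $\io\frac{\phi_n^2}{|x|^{2s}}\,dx=1$, $\|\phi_n\|^2\to\Lambda_N$; replacing $\phi_n$ by $|\phi_n|$ we may take $\phi_n\ge0$. By Proposition~\ref{norm eqv} the sequence is bounded in $\mathbb{E}^{s}(\Omega,D)$, hence, up to a subsequence, $\phi_n\weakly\phi$ there, $\phi_n\to\phi$ in $L^2_{loc}(\RR^d)$ and a.e., and (by Propositions~\ref{norm eqv} and \ref{PSO}) $\{\phi_n\}$ is bounded in $L^{2^*_s}(\RR^d)$ — the latter is what controls the possibly unbounded set $N$. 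Put $w_n=\phi_n-\phi$. Weak convergence in the Hilbert space gives the exact splitting $\|\phi_n\|^2=\|\phi\|^2+\|w_n\|^2+o(1)$, and Brezis--Lieb applied to the measure $|x|^{-2s}\chi_\Omega\,dx$ gives $1=\io\frac{\phi^2}{|x|^{2s}}\,dx+\io\frac{w_n^2}{|x|^{2s}}\,dx+o(1)$. Since $0\in\Omega$ and $|x|^{-2s}$ is bounded on $\Omega\setminus B_\rho(0)$ while $w_n\to0$ in $L^2_{loc}$, the only loss of weighted mass is at the origin: with $\ell:=\lim_n\io\frac{w_n^2}{|x|^{2s}}\,dx$ one has $\lim_n\int_{B_\rho(0)}\frac{w_n^2}{|x|^{2s}}\,dx=\ell$ for every small $\rho>0$ and $\io\frac{\phi^2}{|x|^{2s}}\,dx=1-\ell$. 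The key estimate comes from testing with $\psi_\rho w_n$, where $\psi_\rho\in\mathcal{C}_0^\infty(B_{2\rho}(0))$, $\psi_\rho\equiv1$ on $B_\rho(0)$, $B_{2\rho}(0)\Subset\Omega$: since $\psi_\rho w_n\in H^s_0(\Omega)$, \eqref{hardy} gives $\Lambda\int_{B_\rho(0)}\frac{w_n^2}{|x|^{2s}}\,dx\le\frac{a_{d,s}}{2}\int\int_{\mathcal{D}_\Omega}|\psi_\rho w_n(x)-\psi_\rho w_n(y)|^2\,d\nu$, and expanding the increments of the product and using Young's inequality bounds the right-hand side by $(1+\eta)\|w_n\|^2+C_\eta\,\frac{a_{d,s}}{2}\int\int w_n^2(y)(\psi_\rho(x)-\psi_\rho(y))^2\,d\nu$, the last integral tending to $0$ as $n\to\infty$ for fixed $\rho$ (using $w_n\to0$ in $L^2_{loc}$, the bound in $L^{2^*_s}(\RR^d)$, and that $y\mapsto\int(\psi_\rho(x)-\psi_\rho(y))^2|x-y|^{-d-2s}\,dx$ lies in $L^1\cap L^\infty(\RR^d)$). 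Letting $n\to\infty$ and then $\eta\to0$ yields $\Lambda\ell\le\lim_n\|w_n\|^2$; combining with $\|\phi\|^2\ge\Lambda_N(1-\ell)$,
\[
\Lambda_N=\|\phi\|^2+\lim_n\|w_n\|^2\ \ge\ \Lambda_N(1-\ell)+\Lambda\ell\ =\ \Lambda_N+(\Lambda-\Lambda_N)\,\ell ,
\]
so $\ell=0$ since $\Lambda>\Lambda_N$; hence $\io\frac{\phi^2}{|x|^{2s}}\,dx=1$, $\phi\neq0$, and weak lower semicontinuity gives $\|\phi\|^2\le\liminf_n\|\phi_n\|^2=\Lambda_N$, so $\phi$ attains $\Lambda_N$.

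For the necessity, assume for contradiction that $\Lambda_N=\Lambda$ is attained by some $\phi\in\mathbb{E}^{s}(\Omega,D)$, $\phi\not\equiv0$, taken $\ge0$ (replace by $|\phi|$) and normalized by $\io\frac{\phi^2}{|x|^{2s}}\,dx=1$. By the Euler--Lagrange equation and Proposition~\ref{BYP}, $\phi$ solves \eqref{P} with $\Lambda_N=\Lambda$; in particular $(-\Delta)^s\phi=\Lambda\,|x|^{-2s}\phi\ge0$ in $\Omega$, $\mathcal{N}_s\phi=0$ on $N$, $\phi=0$ on $D$, so by the strong maximum principle $\phi>0$ in $\Omega\cup N$ and $\phi\in\mathcal{C}^\infty(\Omega\setminus\{0\})$. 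Now the sharp estimates enter: since $\Lambda$ is the \emph{optimal} Hardy constant, it is precisely the largest coupling $\mu$ for which $(-\Delta)^s u=\mu|x|^{-2s}u$ has real homogeneous solutions at $0$, and at $\mu=\Lambda$ these are only $|x|^{-\frac{d-2s}{2}}$ and $|x|^{-\frac{d-2s}{2}}\log\frac1{|x|}$; using the precise asymptotics of positive solutions of the fractional Hardy operator near an isolated singularity at the critical coupling (a nontrivial positive solution cannot vanish to infinite order there), one gets a lower bound $\phi(x)\ge c\,|x|^{-\frac{d-2s}{2}}$ near $0$ with $c>0$. But then
\[
\io\frac{\phi^2}{|x|^{2s}}\,dx\ \ge\ c^2\int_{B_\delta(0)}|x|^{-(d-2s)}\,|x|^{-2s}\,dx\ =\ c^2\int_{B_\delta(0)}|x|^{-d}\,dx\ =\ +\infty ,
\]
contradicting the normalization; hence $\Lambda_N<\Lambda$, and the same analysis forces any minimizer, when it exists, to be strictly less singular than $|x|^{-\frac{d-2s}{2}}$ at the pole.

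The main obstacle is the necessity direction, and in it the ``previous sharp estimates'': one needs a Frobenius-type / blow-up analysis of $(-\Delta)^s-\Lambda|x|^{-2s}$ at the critical value of the coupling, giving two-sided control of the solution near the pole, together with the strong maximum principle and the regularity theory for the mixed problem — the latter delicate precisely because $N$ may be unbounded and little is known about the regularity of solutions there. By comparison, the compactness argument in the sufficiency part is essentially routine once the loss of weighted mass has been localized at the origin.
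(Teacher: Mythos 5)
Your sufficiency argument is correct but takes a genuinely different route from the paper. The paper argues by contradiction: assume the weak limit $\bar u$ is zero, localize $u_n$ near the pole with a cutoff $\varphi$, verify that $\io w_n^2/|x|^{2s}\,dx=1+o(1)$ and $\|w_n\|^2\le\|u_n\|^2+o(1)$ after carefully disposing of the cross and tail terms $I_2^i,I_3^i$, and then conclude $\Lambda\le\Lambda_N$, a contradiction. You instead keep both pieces $\phi$ and $w_n=\phi_n-\phi$, use the exact Hilbert-space splitting and Brezis--Lieb for the weighted $L^2$ norm, localize the defect $\ell$ at the pole, and obtain $\Lambda\ell\le\lim\|w_n\|^2$ via the cutoff-and-Young estimate; the inequality $\Lambda_N\ge\Lambda_N(1-\ell)+\Lambda\ell$ then forces $\ell=0$. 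This is a clean concentration-compactness-type bookkeeping that the paper avoids (it announces ``an alternative approach'' in the introduction, but its alternative is the indirect $\bar u=0$ argument, not yours). Both proofs rest on the same core estimate: the sharp local Hardy inequality for $\psi_\rho w_n\in H^s_0(\Omega)$ and the fact that the commutator term $\int\!\int w_n^2(y)(\psi_\rho(x)-\psi_\rho(y))^2\,d\nu$ is negligible. One small point to spell out in your version: the Brezis--Lieb splitting for $\io\phi_n^2/|x|^{2s}\,dx$ needs $\phi_n\weakly\phi$ in the weighted $L^2(\Omega,|x|^{-2s}dx)$, which you get from boundedness in $\mathbb{E}^s(\Omega,D)$ and the already-established Hardy inequality $\Lambda_N>0$; worth a sentence.

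The necessity direction is where your proposal is genuinely incomplete. Your idea — show $\phi(x)\ge c|x|^{-\frac{d-2s}{2}}$ near $0$ and contradict the finiteness of the Hardy integral — is the right one, and it matches the paper's contradiction in spirit (the paper instead contradicts $w\in L^{2^*_s}$, which is equivalent). But you apply the sharp lower bound directly to the minimizer $\phi$, which is a solution of a mixed Dirichlet--Neumann problem with a possibly unbounded Neumann set $N$ and an uncontrolled nonlocal tail. The references that give such two-sided blow-up rates (here \cite{AMPA}) are for supersolutions with Dirichlet data on a small ball, not for mixed nonlocal boundary conditions, and ``Frobenius-type / blow-up analysis of $(-\Delta)^s-\Lambda|x|^{-2s}$'' is not a proof. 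The paper gets around this precisely with a reduction you omit: it introduces the $s$-harmonic replacement $v$ of $u$ in a small ball $B_r(0)\subset\subset\Omega$ (with exterior data equal to $u$ on $\Omega\setminus B_r$ and $0$ on $\Omega^c$), sets $w=u-v\ge0$, observes that $w$ is compactly supported in $B_r(0)$ and satisfies $(-\Delta)^sw=\Lambda u/|x|^{2s}\ge\Lambda w/|x|^{2s}$, and only then invokes the lower bound $w(x)\ge C_1|x|^{-\frac{d-2s}{2}}$ from \cite{AMPA}, for which the hypotheses are actually met. Without this step (or an explicit proof that the blow-up rate transfers through the mixed boundary condition), your necessity argument has a gap. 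You do flag the ``sharp estimates'' as the main obstacle, which is accurate — but the missing ingredient is not so much the estimate itself as the reduction to a setting where it applies.
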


We split the proof into two parts contained in the following two  subsections.

\subsection{If $\Lambda_{N}<\Lambda$, then $\Lambda_N$ is attained}\label{+}

\begin{Proposition}\label{exist01}
In  the hypotheses of Theorem \ref{exist00}, assume that  $\Lambda_{N}<\Lambda$. Then, $\Lambda_N$ is attained
\end{Proposition}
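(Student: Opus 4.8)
The plan is to use the direct method of the calculus of variations. Let $\{u_n\}\subset\mathbb{E}^{s}(\Omega,D)$ be a minimizing sequence for $\Lambda_N$, normalized by $\int_{\Omega}|x|^{-2s}u_n^2\,dx=1$, so that $\|u_n\|^2\to\Lambda_N$. By Proposition \ref{norm eqv} the sequence is bounded in $\mathbb{E}^{s}(\Omega,D)$, hence (up to a subsequence) $u_n\weakly u$ weakly in $H^s(\RR^d)$, $u_n\to u$ strongly in $L^2_{loc}(\RR^d)$ and a.e., and $u=0$ in $D$, so $u\in\mathbb{E}^{s}(\Omega,D)$. The goal is to show $u\not\equiv 0$ and that $u$ is a minimizer. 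Weak lower semicontinuity of the Gagliardo seminorm gives $\|u\|^2\le\liminf\|u_n\|^2=\Lambda_N$, so once we know $\int_{\Omega}|x|^{-2s}u^2\,dx=1$ we are done, because then $\|u\|^2\ge\Lambda_N\int_{\Omega}|x|^{-2s}u^2\,dx=\Lambda_N$, forcing equality.

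The heart of the matter is therefore to rule out loss of mass in the denominator, i.e.\ to show $\int_{\Omega}|x|^{-2s}u_n^2\,dx\to\int_{\Omega}|x|^{-2s}u^2\,dx=1$. Away from the origin the potential $|x|^{-2s}$ is bounded on $\Omega$ and $u_n\to u$ in $L^2_{loc}$, so no mass is lost on $\{|x|\ge\varepsilon\}$; the only possible concentration is at $x=0$. To quantify this, I would set $v_n=u_n-u\weakly 0$ and use a Brezis--Lieb type splitting together with the localized Hardy inequality: writing $\mu=\limsup_n\int_{B_\varepsilon(0)}|x|^{-2s}v_n^2\,dx$ (the concentrated mass at the origin), one gets from \eqref{hardy} applied to a cutoff of $v_n$ supported near $0$ that $\limsup_n\|v_n\|^2\ge\Lambda\mu$ — more precisely, since $v_n\varphi\in H^s_0(\Omega)$ for a cutoff $\varphi\equiv1$ near $0$, the sharp constant $\Lambda$ of \eqref{hardy} controls the Gagliardo energy of the concentrating part from below by $\Lambda$ times the concentrated Hardy mass, while the remainder of the energy is nonnegative. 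Combining this with the weak-convergence decomposition $\|u_n\|^2=\|u\|^2+\|v_n\|^2+o(1)$ and $1=\int_\Omega|x|^{-2s}u^2+\mu+o(1)$ yields
\[
\Lambda_N=\lim_n\|u_n\|^2\ge\|u\|^2+\Lambda\mu\ge\Lambda_N\Big(1-\mu\Big)+\Lambda\mu=\Lambda_N+(\Lambda-\Lambda_N)\mu .
\]
Since $\Lambda-\Lambda_N>0$ by hypothesis, this forces $\mu=0$, so no mass concentrates at the origin, $\int_\Omega|x|^{-2s}u^2\,dx=1$, and in particular $u\not\equiv 0$.

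The main obstacle, and the step requiring care, is making the lower bound ``$\limsup_n\|v_n\|^2\ge\Lambda\mu$'' rigorous in the nonlocal setting: the Gagliardo seminorm does not localize as cleanly as the Dirichlet integral, so one must handle the cross terms coming from the cutoff $\varphi$ (exactly the $J_2$, $J_3$ type terms appearing in the previous theorem's proof) and show they are $o(1)$ as $n\to\infty$ and then $\varepsilon\to 0$, using $v_n\weakly0$, the estimate $\int_\Omega\int_\Omega|x-y|^{2-d-2s}\,dx\,dy<\infty$ from the lower-order tail, and the boundedness of $v_n$. This is the nonlocal replacement for the concentration-compactness argument used in the local case \cite{ACP1,ACP2}; the authors announced in the introduction that they use ``an alternative approach'' precisely here. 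Once $u$ is shown to be a nontrivial minimizer, positivity $u>0$ (or replacing $u$ by $|u|$, which does not increase the energy) and the Euler--Lagrange equation \eqref{P} follow by standard arguments, and equality in the Picone inequality (Theorem \ref{piconeth}) can be invoked if one wants uniqueness/simplicity statements, though that is beyond what is needed for attainability.
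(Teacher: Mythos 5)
Your argument is correct, and it reaches the conclusion by a genuinely different route from the one in the paper, even though both hinge on the same key technical work. The paper argues by contradiction: it assumes the weak limit $\bar u$ vanishes, exploits the resulting strong $L^2(\O)$ decay of $u_n$ to kill the nonlocal cross terms in the cutoff computation, deduces $\L\le\L_N+o(1)$, and then — once $\bar u\neq 0$ is known — invokes Ekeland's variational principle to obtain an approximate Euler--Lagrange equation, passes to the limit, and tests the limit equation \eqref{otov} with $\bar u$ to conclude that the quotient equals $\L_N$. You instead run a direct Brezis--Lieb dichotomy: the Hilbert-space identity $\|u_n\|^2=\|\bar u\|^2+\|v_n\|^2+o(1)$, the splitting $1=\int_\O|x|^{-2s}\bar u^2+\mu+o(1)$ of the Hardy mass (justified by $v_n\weakly 0$ and Cauchy--Schwarz for the cross term), and the key lower bound $\limsup_n\|v_n\|^2\ge\L\mu$ obtained by cutting off $v_n$ near the origin; combining these with the definition of $\L_N$ and the hypothesis $\L_N<\L$ forces $\mu=0$, so no Hardy mass is lost, and weak lower semicontinuity finishes the job. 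Your version avoids both the contradiction framing and the Ekeland step, which is a cleaner way to get attainability per se; the paper's version has the side benefit of producing the Euler--Lagrange equation \eqref{otov} along the way, which it needs later. In both cases the crux is identical and you correctly identify it: the nonlocal localization estimate $\|\varphi v_n\|^2\le \|v_n\|^2+o(1)$, which requires controlling the $I_2$, $I_3$-type cross terms produced by the cutoff, using $\int_\O\int_\O|x-y|^{2-d-2s}\,dx\,dy<\infty$, strong $L^2(\O)$ convergence of $v_n$ to $0$, and the structure of the $\O\times\O^c$ integrals. One small point worth being explicit about in a write-up: your $\mu$ is a priori $\e$-dependent, but since $v_n\to 0$ strongly in $L^2(\O\setminus B_\e)$ for every fixed $\e$, $\limsup_n\int_{B_\e}|x|^{-2s}v_n^2\,dx$ is in fact independent of $\e$ and equals $\lim_n\int_\O|x|^{-2s}v_n^2\,dx$, so the definition is consistent.
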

\begin{proof}
Let $\{u_n\}_n\subset \mathbb{E}^{s}(\Omega, D)$ be a minimizing sequence  for $\L_N$ defined in \eqref{hardymix} with $\dyle \int_{\O}\dfrac{u_n^2}{|x|^{2s}}\,
dx=1$, then $\{u_n\}_n$ is bounded in $\mathbb{E}^{s}(\Omega, D)$, and
$$\dfrac{a_{d,s}}{2}\Int\Int_{\RR^{d}\times\RR^{d}}|u_n(x)-u_n(y)|^{2}\,d\nu\to \Lambda_{N}.$$ Without loss of generality we can assume that $u_n\ge 0$ for all
$n$.
Hence we get the existence of $\bar{u}\in \mathbb{E}^{s}(\Omega, D)$ such that $u_n \rightharpoonup \bar{u}$ weakly in $\mathbb{E}^{s}(\Omega, D)$, and up to a
subsequence, $u_n\to \bar{u}$ strongly in $L^\s(\O)$ for all $\s<2^*_s$ and $u_n\to \bar{u}$ a.e in $\O$.

We claim that $\bar{u}\neq 0$. We argue by contradiction. Assume that $\bar{u}=0$ and let $R>0$ be such that $B_{4R}(0)\subset\O $. Consider $\varphi\in
\mathcal{C}^\infty_0(\O)$ such that $0\le \varphi\le 1$, $\varphi\equiv 1$ in $B_R(0)$ and $\varphi=0$ in $\RR^{d}\backslash B_{2R}(0)$, we define $w_n=\varphi
u_n$, then $w_n\in H^s_0(\O)$ and
\begin{equation}\label{madrid}
\Lambda\le\dfrac{\dfrac{a_{d,s}}{2}\displaystyle\int\int_{\mathcal{D}_{\Omega}}|w_n(x)-w_n(y)|^{2}\,d\nu}{\displaystyle\io \frac{w_n^2}{|x|^{2s}}\,dx},
\end{equation}
Notice that
\begin{eqnarray*}
\dyle\io \frac{w_n^2}{|x|^{2s}}\,dx & = & \dyle\io \frac{u_n^2\varphi^2}{|x|^{2s}}\,dx=\dyle\io \frac{u_n^2}{|x|^{2s}}\,dx+\dyle\io
\frac{u_n^2(\varphi^2-1)}{|x|^{2s}}\,dx\\ &=& \dyle\io \frac{u_n^2}{|x|^{2s}}\,dx+\dyle\int_{B_R(0)}
\frac{u_n^2(\varphi^2-1)}{|x|^{2s}}\,dx+\dyle\int_{\O\backslash B_R(0)} \frac{u_n^2(\varphi^2-1)}{|x|^{2s}}\,dx\\ &=& 1+\dyle\int_{\O\backslash B_R(0)}
\frac{u_n^2(\varphi^2-1)}{|x|^{2s}}\,dx\\ &=& 1+o(1).
\end{eqnarray*}
We have
$$\int\int_{\mathcal{D}_{\Omega}}|w_n(x)-w_n(y)|^{2}\,d\nu=\int\int_{\mathcal{D}_{\Omega}}|u_n(x)\varphi(x)-u_n(y)\varphi(y)|^{2}\,d\nu.$$
Since
$$
\begin{array}{lll}
(u_n(x)\varphi(x)-u_n(y)\varphi(y))^2 &= & \big((u_n(x)-u_n(y))\varphi(x)+u_n(y)(\varphi(x)-\varphi(y))\big)^2\\ \\ &= &
(u_n(x)-u_n(y))^2\varphi^2(x)+u_n^2(y)(\varphi(x)-\varphi(y))^2\\ \\ &+ & 2u_n(y)\varphi(x)(u_n(x)-u_n(y))(\varphi(x)-\varphi(y)),
\end{array}
$$
it holds that
$$
\begin{array}{lll}
\dyle \int\int_{\mathcal{D}_{\Omega}}|w_n(x)-w_n(y)|^{2}\,d\nu & = & \dyle \int\int_{\mathcal{D}_{\Omega}}|u_n(x)\varphi(x)-u_n(y)\varphi(y)|^{2}\,d\nu\\ &=&
\dyle \int\int_{\mathcal{D}_{\Omega}}(u_n(x)-u_n(y))^2\varphi^2(x)\,d\nu+\int\int_{\mathcal{D}_{\Omega}}u_n^2(y)(\varphi(x)-\varphi(y))^2\,d\nu\\ &+& 2\dyle
\int\int_{\mathcal{D}_{\Omega}}u_n(y)\varphi(x)(u_n(x)-u_n(y))(\varphi(x)-\varphi(y))\,d\nu\\ &=& I_1(n)+I_2(n)+2I_3(n).
\end{array}
$$
Let us begin by estimating the term $I_2(n)$. Recall that $d\nu=\dfrac{dx\,dy}{|x-y|^{d+2s}}$, then we have
$$
\begin{array}{lll}
I_2(n) & = & \dyle \int\int_{\mathcal{D}_{\Omega}}(\varphi(x)-\varphi(y))^2u_n^2(y)\,d\nu\\ \\ &= &
\dyle\int_{\Omega}\int_{\Omega}(\varphi(x)-\varphi(y))^2u_n^2(y)\,d\nu+ \int_{\Omega}\int_{\Omega^c}(\varphi(x)-\varphi(y))^2u_n^2(y)\,d\nu\\  \\ &+&   \dyle
\int_{\Omega^c}\int_{\Omega}(\varphi(x)-\varphi(y))^2u_n^2(y)\,d\nu \\ \\ &= & I_2^1(n)+I_2^2(n)+I_2^3(n).
\end{array}
$$
Taking into account that for all $(x,y)\in \O\times \O$, $(\varphi(x)-\varphi(y))^2\le C(\O)|x-y|^2$, we reach that
$$I_2^1(n)\le C(\O)\int_{\O}\int_\O \frac{u_n^2(y)}{|x-y|^{d+2s-2}}dxdy \le C(\O)\int_{\O}u_n^2(y) \,dy\int_{|\xi|\le\e}\frac{1}{|\xi|^{d+2s-2}}\,d\xi.$$
Since $\dint_{|\xi|\le\e}\frac{1}{|\xi|^{d+2s-2}}\,d\xi<\infty$, then $I_2^1(n)= o(1).$

Next we proceed to estimate  $I_2^2(n)$. We have
\begin{eqnarray*}
I_2^2(n) & = & \dyle \int_{\Omega}\int_{\Omega^c}(\varphi(x)-\varphi(y))^2u_n^2(y)\,d\nu=\int_{\Omega}\int_{\Omega^c} \varphi^2(y)u_n^2(y)\,d\nu\\ &=& \dyle
\int_{\Omega}\varphi^2(y)u_n^2(y)\int_{\Omega^c}\frac{1}{|x-y|^{d+2s}}\,dx\,dy=\int_{|y|\le 2R}\varphi^2(y)u_n^2(y)\int_{|x|\ge 4R}\frac{1}{|x|^{d+2s}}\,dx\,dy\\
&=& o(1).
\end{eqnarray*}

Finally, we consider the term $I_3(n)$.
\begin{eqnarray*}
I_3 (n) &&=\int\int_{\mathcal{D}_{\Omega}}u_n(y)\varphi(x)(u_n(x)-u_n(y))(\varphi(x)-\varphi(y))\,d\nu\\
&&=\int_{\Omega}\int_{\Omega}u_n(y)\varphi(x)(u_n(x)-u_n(y))(\varphi(x)-\varphi(y))d\nu\\
&&+\int_{\Omega}\int_{\Omega^c}u_n(y)\varphi(x)(u_n(x)-u_n(y))(\varphi(x)-\varphi(y))\,d\nu\\ &&+\int_{\Omega^c}\int_{\Omega}
u_n(y)\varphi(x)(u_n(x)-u_n(y))(\varphi(x)-\varphi(y))\,d\nu\\ &&=I_3^1(n)+I_3^2(n)+I_3^3(n).
\end{eqnarray*}
We start by the first term. Using H\"older inequality, we get
$$
\begin{array}{lll}
I_3^1(n) &=& \dyle \int_{\Omega}\int_{\Omega} u_n(y)\varphi(x)(u_n(x)-u_n(y))(\varphi(x)-\varphi(y))\,d\nu\\ &\le & \dyle \Big(\int_{\Omega}\int_{\Omega}
(u_n(x)-u_n(y))^2\varphi^2(x)\,d\nu\Big)^{\frac 12}\Big(\int_{\Omega}\int_{\Omega} (\varphi(y)-\varphi(x))^2u_n^2(y)\,d\nu\Big)^{\frac 12}\\ \\ &\le & C
I_2^1(n)=o(1).
\end{array}
$$
Now, since $\varphi(x)=0$ if $x\in \O^c$, then $I_3^2(n)=0$.

Combing the above estimates, we conclude that
\begin{equation}\label{main00}
\dyle \int\int_{\mathcal{D}_{\Omega}}|w_n(x)-w_n(y)|^{2}\,d\nu = \dyle \int\int_{\mathcal{D}_{\Omega}}(u_n(x)-u_n(y))^2\varphi^2(x)\,d\nu+ I_2^3(n)+2I_3^3(n)
+o(1).
\end{equation}
Notice that
\begin{eqnarray*}
I_2^3(n)+2I_3^3(n) &&=\int_{N}\int_{\O}(\varphi(x)-\varphi(y))^2u_n^2(y)\,d\nu\\
&&+2\int_{N}\int_{\O}u_n(y)\varphi(x)(u_n(x)-u_n(y))(\varphi(x)-\varphi(y))\,d\nu\\ &&=\int_{N}\int_{\O}\varphi^2(x)u_n^2(y) \,d\nu
+2\int_{N}\int_{\O}u_n(y)\varphi^2(x)(u_n(x)-u_n(y))\,d\nu\\ &&=\int_{N}\int_{\O}\varphi^2(x)u_n^2(y) \,d\nu
+\int_{N}\int_{\O}2u_n(y)u_n(x)\varphi^2(x)\,d\nu-2\int_{N}\int_{\O}u_n^2(y)\varphi^2(x)\,d\nu\\ &&=-\int_{N}\int_{\O}\varphi^2(x)u_n^2(y)\,d\nu
+\int_{N}\int_{\O}2u_n(y)u_n(x)\varphi^2(x)\,d\nu.
\end{eqnarray*}
Using now Young inequality, we get that
$$
\begin{array}{lll}
I_2^3(n)+2I_3^3(n) & \le & \dyle -\int_{N}\int_{\O}\varphi^2(x)u_n^2(y)\,d\nu+\e\int_{N}\int_{\O} u_n^2(y)\varphi^2(x)\,d\nu+C_\e
\int_{N}\int_{\O}u_n^2(x)\varphi^2(x)\,d\nu\\ &\le & \dyle  (\e-1)\int_{N}\int_{\O}\varphi^2(x)u_n^2(y)\,d\nu+C_\e \int_{N}\int_{\O} u_n^2(x)\varphi^2(x)\,d\nu.
\end{array}
$$
Choosing $\e$ small enough, we obtain that
$$
\begin{array}{lll}
I_2^3(n)+2I_3^3(n) & \le & C_\e\dyle \int_{\re^d\backslash B_{4R}(0)}\int_{B_{2R}(0)} u_n^2(x)\varphi^2(x)\,d\nu\\  \\ &\le & C(R, \e)\dyle
\int_{B_{2R}(0)}u_n^2(x)\,dx= o(1).
\end{array}
$$
Therefore, from \eqref{main00}, it holds that
\begin{equation}\label{main000}
\dyle \int\int_{\mathcal{D}_{\Omega}}|w_n(x)-w_n(y)|^{2}\,d\nu\le \dyle \int\int_{\mathcal{D}_{\Omega}}(u_n(x)-u_n(y))^2\varphi^2(x)\,d\nu +o(1).
\end{equation}
Going back to \eqref{madrid}, we conclude that
$$
\begin{array}{lll}
\Lambda_N<\Lambda &\le & \dfrac{\dfrac{a_{d,s}}{2}\displaystyle\int\int_{\mathcal{D}_{\Omega}}|w_n(x)-w_n(y)|^{2}\,d\nu}{\displaystyle\io
\frac{w_n^2}{|x|^{2s}}\,dx}\le \dfrac{\dfrac{a_{d,s}}{2}\displaystyle\int\int_{\mathcal{D}_{\Omega}}|u_n(x)-u_n(y)|^{2}\,d\nu}{1+o(1)}\\ & =& \L_N+o(1),
\end{array}
$$
which is a contradiction with the hypothesis $\L_N<\L$. Hence $\bar{u}\neq 0$ and then the claim follows.

To show that $\L_N$ is achieved we will use the Ekeland variational principle, \cite{Ek}, then up to a subsequence, it holds that
\begin{equation}\label{Pnn}
\left\{
\begin{array}{rcll}
(-\Delta)^s u_n & = & \Lambda_{N} \dfrac{u_n}{|x|^{2s}}+o(1) & {\text{ in }}\O,\\
   \mathcal{B}_{s}u_n & = & 0 &{\text{ in }} \RR^{d}\backslash \O,
\end{array}\right.
\end{equation}
Let $\varphi\in \mathbb{E}^{s}(\Omega, D)$, by duality argument we obtain that
$$\int_{\O} (-\Delta)^s u_n\varphi\to \int_{\O} (-\Delta)^s \bar{u}\varphi \mbox{  and  }\int_{\O}\dfrac{u_n\varphi}{|x|^{2s}}\to
\int_{\O}\dfrac{\bar{u}\varphi}{|x|^{2s}} \qquad\text{ as } n \to \infty.$$
Thus $\bar{u}$ solves the problem
\begin{equation}\label{otov}
\left\{
\begin{array}{rcll}
(-\Delta)^s \bar{u} & = & \Lambda_{N} \dfrac{\bar{u}}{|x|^{2s}} & {\text{ in }}\O,\\
   \bar{u}\in \mathbb{E}^{s}(\Omega, D), \bar{u} & \ge & 0 &{\text{ in }} \Omega, \\
   \mathcal{B}_{s}\bar{u} & = & 0 &{\text{ in }} \RR^{d}\backslash \O.
\end{array}\right.
\end{equation}
Choosing $\bar{u}$ as a test function in \eqref{otov}, we obtain
$$
\Lambda_{N}=\dfrac{\dfrac{a_{d,s}}{2}\displaystyle\int\int_{\mathcal{D}_{\Omega}}|\bar{u}(x)-\bar{u}(y)|^{2}\,d\nu}{\displaystyle\io
\frac{\bar{u}^2}{|x|^{2s}}\,dx},
$$
and the result follows.
\end{proof}

\subsubsection{ Properties of the spectral value $\Lambda_N$ if $\L_N<\L$}\label{sec:attained}

In this subsection we treat the case $\L_N<\L$, thus $\L_N$ is achieved and we prove that it behaves like a principal eigenvalue of the mixed elliptic problem
with the Hardy weight.

To start, we begin by giving some  configurations of $(D,N)$  for which the constant $\L_N$ is reached. By the previous results it suffices to prove that
$\L_N<\L$. This last inequality is a straightforward consequence of some results contained in \cite{LMPAS}. For the reader convenience  we below explain some details.

We say that $\O$ is an \textit{{admissible}} domain  if it is a $C^{1,1}$ and it satisfies the exterior sphere condition. Now, let consider sequences of sets
$\{D_k\}_{k\in \mathbb{N}}$, $\{N_k\}_{k\in \mathbb{N}}$ such that $N_k \cap D_k=\emptyset$ and $\overline{N_k}\cup \overline{D_k}= \re^{d}\backslash\Omega$.
Following closely the same argument as in \cite{LMPAS}, we obtain the next result.
\begin{Theorem}\label{infty}
Let $\O$ be an admissible domain and $0<s<\frac 12$. Suppose that  for all $R>0$ we have $\lim\limits_{k\to\infty}|D_k\cap B_R|=0$, then
$\lim\limits_{k\to\infty}\L_k=0$ and as a consequence there exists $k_0\in \ene$ such that $\L_k$ is attained for all $k\ge k_0$.
\end{Theorem}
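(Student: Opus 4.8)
The plan is to reproduce, in the present fractional mixed setting, the argument of \cite{LMPAS}. Since the numerator and the denominator in \eqref{hardymix} are nonnegative we have $\L_k\equiv\L_{N_k}\ge 0$ (and $\L_k>0$ when $D_k\neq\emptyset$, by the first Theorem of this section), so it suffices to prove that $\limsup_{k\to\infty}\L_k\le\eps$ for every $\eps>0$. A convenient first reduction: since $\O$ is bounded there is $c=c(\O,s)>0$ with $|x|^{-2s}\ge c$ on $\O$, hence $\io |x|^{-2s}\phi^2\,dx\ge c\io\phi^2\,dx$, and so
\[
\L_k\ \le\ \frac{1}{c}\,\mu_k,\qquad \mu_k:=\inf_{\phi\in\mathbb{E}^{s}(\O,D_k)\setminus\{0\}}\frac{\frac{a_{d,s}}{2}\int\int_{\mathcal{D}_{\O}}|\phi(x)-\phi(y)|^2\,d\nu}{\io\phi^2\,dx},
\]
the first eigenvalue of the mixed Dirichlet--Neumann problem for $(-\Delta)^s$ without Hardy weight. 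Under the present hypotheses ($0<s<\frac 12$, $\O$ admissible, $|D_k\cap B_R|\to 0$ for every $R$) one has $\mu_k\to 0$ -- this is exactly the type of spectral convergence established in \cite{LMPAS} -- hence $\L_k\to 0$. The last assertion is then immediate: since $\L>0$ there is $k_0$ with $\L_k<\L$ for all $k\ge k_0$, so Theorem \ref{exist00} (see Proposition \ref{exist01}) yields that $\L_k$ is attained for $k\ge k_0$.

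It remains to recall how $\mu_k\to 0$ is proved (one may equally run the same argument directly on $\L_k$, the Hardy weight merely replacing the constant $|\O|$ below by the finite positive number $\io |x|^{-2s}\,dx$). First I would build ``quasi--constant'' competitors: fix $R_0$ with $\overline\O\subset B_{R_0}$ and, for $\rho>2R_0$, pick $v_\rho\in\mathcal{C}^\infty_0(\RR^d)$ with $0\le v_\rho\le 1$, $v_\rho\equiv 1$ on $B_\rho$ and $v_\rho\equiv 0$ off $B_{2\rho}$. Since $v_\rho\equiv 1$ on a neighbourhood of $\overline\O$, the seminorm of $v_\rho$ over $\mathcal{D}_{\O}$ only sees pairs $(x,y)$ with one point in $\O$ and the other outside $B_\rho$, where $|x-y|\ge\rho/2$; this gives
\[
\frac{a_{d,s}}{2}\int\int_{\mathcal{D}_{\O}}|v_\rho(x)-v_\rho(y)|^2\,d\nu\ \le\ C\,\rho^{-2s},\qquad \io v_\rho^2\,dx=|\O|>0 ,
\]
so $v_\rho\in H^s(\RR^d)$ has Rayleigh quotient $\le C\rho^{-2s}/|\O|$; since $v_\rho$ need not vanish on $D_k$, it remains to correct it into an admissible competitor for $D_k$ at a cost vanishing as $k\to\infty$.

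This correction is the heart of the matter, and is where $0<s<\frac 12$ and $|D_k\cap B_R|\to 0$ are used decisively. Fix $\rho=\rho(\eps)$ large and set $O_k:=D_k\cap B_{2\rho}$, an open set with $|O_k|\to 0$ as $k\to\infty$. Since $s<\frac 12$, characteristic functions of bounded open sets have finite Gagliardo seminorm, and -- this is the quantitative input I would take from \cite{LMPAS} -- the relevant $H^s$--capacity of an open subset of the fixed ball $B_{2\rho}$ is controlled by a positive power of its Lebesgue measure; consequently one can construct cut--offs $\psi_k$ with $0\le\psi_k\le 1$, $\psi_k\equiv 0$ on a neighbourhood of $\overline{O_k}$, $\psi_k\to 1$ a.e.\ and $\frac{a_{d,s}}{2}\int\int_{\mathcal{D}_{\O}}|\psi_k(x)-\psi_k(y)|^2\,d\nu\to 0$. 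Then $\phi_k:=v_\rho\psi_k$ belongs to $\mathbb{E}^{s}(\O,D_k)$ (it is supported in $B_{2\rho}$ and vanishes on $D_k$); writing $v_\rho(x)\psi_k(x)-v_\rho(y)\psi_k(y)=\psi_k(x)\big(v_\rho(x)-v_\rho(y)\big)+v_\rho(y)\big(\psi_k(x)-\psi_k(y)\big)$ and using $0\le v_\rho,\psi_k\le 1$ gives $\frac{a_{d,s}}{2}\int\int_{\mathcal{D}_{\O}}|\phi_k(x)-\phi_k(y)|^2\,d\nu\le 2C\rho^{-2s}+o(1)$ as $k\to\infty$, whereas $\io\phi_k^2\,dx=\io\psi_k^2\,dx\to|\O|$ by dominated convergence. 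Hence $\limsup_{k\to\infty}\mu_k\le 2C\rho^{-2s}/|\O|$, and letting $\rho\to\infty$ gives $\mu_k\to 0$.

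The main obstacle is precisely this last construction -- showing that smallness of $|D_k\cap B_R|$ forces smallness of the pertinent $H^s$--capacity. It is here that the restriction $s<\frac 12$ is genuinely needed (for $s\ge\frac 12$ even a smooth piece of a hypersurface carries positive capacity, and shells shrinking onto $\partial\O$ would violate the conclusion), and here that I would rely on the capacitary estimates of \cite{LMPAS}; the remaining steps are routine.
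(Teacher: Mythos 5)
Your proposal is correct and follows essentially the same route as the paper: both reduce $\L_k$ to a mixed Dirichlet--Neumann eigenvalue with a bounded weight (the paper uses $1/(|x|^{2s}+\rho)$, you use the constant lower bound $c\le|x|^{-2s}$ on the bounded domain $\O$), invoke the spectral convergence result of \cite{LMPAS} to send that eigenvalue to $0$, and then apply Theorem \ref{exist00}/Proposition \ref{exist01} to get attainability once $\L_k<\L$. The extra material you include (quasi-constant competitors $v_\rho$ and the capacitary correction using $s<\frac12$) is a sketch of what \cite{LMPAS} proves rather than an independent argument, and the paper simply cites that reference without reproducing it.
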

\begin{proof}
For $\rho>0$ fixed, we define
$$\l_{\rho,k}=\inf\limits_{\{u\in E^s(\Omega, D), ||u||\neq 0\}}
\dyle\dfrac{\dyle \dfrac{a_{d,s}}{2}\int\int_{\mathcal{D}_{\Omega}} |u(x)-u(y)|^2\,d\nu}{\dyle\int_{\Omega}\dfrac{u^2(x)}{|x|^{2s}+\rho}dx},$$ it is clear that
$\L_k\le \l_{\rho,k}$ for all $\rho>0$. From \cite{LMPAS} we know that $\lim\limits_{k\to\infty}\l_{\rho,k}=0$, thus we conclude.
\end{proof}

In the case $\frac 12\le s<1$, we have the following result.
\begin{Theorem}
Assume that $s\in [\frac 12,1)$ and that the hypotheses of Theorem \ref{infty} hold. Assume in addition that for some $\d>0$, we have $dist(D_k,\O)>\d,\: \forall
k\ge k_0$, then $\lim\limits_{k\to\infty}\L_k=0$.
\end{Theorem}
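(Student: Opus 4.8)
My plan is to follow the scheme of the proof of Theorem~\ref{infty}; the only place where the value of $s$ plays a role is the eigenvalue-convergence step, and for $s\in[\tfrac12,1)$ that step is precisely the one that forces the uniform separation $\mathrm{dist}(D_k,\Omega)>\delta$.

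First I would reduce to a bounded weight. For $\rho>0$ fixed consider
\[
\lambda_{\rho,k}=\inf_{\{u\in\mathbb{E}^{s}(\Omega,D_k),\ \|u\|\neq 0\}}\ \frac{\dfrac{a_{d,s}}{2}\int\int_{\mathcal{D}_{\Omega}}|u(x)-u(y)|^{2}\,d\nu}{\int_{\Omega}\dfrac{u^{2}(x)}{|x|^{2s}+\rho}\,dx}.
\]
Since $|x|^{-2s}\ge(|x|^{2s}+\rho)^{-1}$, for every admissible $u$ the Rayleigh quotient defining $\L_k$ is not larger than the one defining $\lambda_{\rho,k}$, so $0<\L_k\le\lambda_{\rho,k}$ for every $k$. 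Because $(|x|^{2s}+\rho)^{-1}$ is bounded, continuous and strictly positive on $\overline{\Omega}$, $\lambda_{\rho,k}$ is the first eigenvalue of the fractional mixed Dirichlet--Neumann problem on $\Omega$ with this regular weight; hence it suffices to prove $\lim_{k\to\infty}\lambda_{\rho,k}=0$, since then letting $k\to\infty$ in $0<\L_k\le\lambda_{\rho,k}$ gives the statement.

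Next I would construct suitable competitors. Fix $M$ large so that $\overline{\Omega}\subset B_{M/4}$, and build $u_k\in\mathbb{E}^{s}(\Omega,D_k)$ with $0\le u_k\le1$, $\mathrm{supp}\,u_k\subset B_M$, $u_k\equiv1$ on $\{\,\mathrm{dist}(\cdot,\Omega)\le\delta/2\,\}$, and $\|1-u_k\|_{L^{2}(B_M)}\to0$ as $k\to\infty$; here the separation $\mathrm{dist}(D_k,\Omega)>\delta$ makes the conditions ``$u_k=0$ on $D_k$'' and ``$u_k\equiv1$ near $\overline{\Omega}$'' compatible, and the $L^{2}$-smallness is where $|D_k\cap B_M|\to0$ enters. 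For such $u_k$ the denominator of the Rayleigh quotient is the fixed positive constant $c_{\rho}=\int_{\Omega}(|x|^{2s}+\rho)^{-1}\,dx$ (as $u_k\equiv1$ on $\Omega$), while --- crucially using that $\int\int_{\mathcal{D}_{\Omega}}$ does not see the interactions between points of $\Omega^{c}$, so that the possibly large Gagliardo energy needed to push $u_k$ down to $0$ on $D_k$ never appears --- the numerator obeys
\[
\frac{a_{d,s}}{2}\int\int_{\mathcal{D}_{\Omega}}|u_k(x)-u_k(y)|^{2}\,d\nu=a_{d,s}\int_{\Omega}\int_{\Omega^{c}}|1-u_k(y)|^{2}\,d\nu\le C\,|\Omega|\big(\delta^{-(d+2s)}\|1-u_k\|_{L^{2}(B_M)}^{2}+M^{-2s}\big),
\]
with $C=C(d,s)$, because $\{y\in\Omega^{c}:u_k(y)\neq1\}\subset\{\mathrm{dist}(\cdot,\Omega)>\delta/2\}$ (so $|x-y|>\delta/2$ for $x\in\Omega$) while outside $B_M$ one has $u_k=0$ and $|x-y|>|y|/2$. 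Thus $\limsup_{k\to\infty}\lambda_{\rho,k}\le C\,|\Omega|\,c_{\rho}^{-1}M^{-2s}$ for every admissible $M$, and letting $M\to\infty$ gives $\lambda_{\rho,k}\to0$.

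The hard part will be producing the functions $u_k$ of the previous step when $s\ge\tfrac12$. For such $s$ the functions of $H^{s}(\re^{d})$ possess traces on hypersurfaces, so the homogeneous Dirichlet condition on $D_k$ keeps being felt however thin $D_k$ becomes, and a cut-off with a fixed transition scale does not work; the separation $\mathrm{dist}(D_k,\Omega)>\delta$ is exactly what decouples the local cost of cutting $u_k$ down to $0$ on $D_k$ --- harmless for the $\mathcal{D}_{\Omega}$-energy by the very definition of $\mathcal{D}_{\Omega}$ --- from the global $L^{2}$-cost, which is governed by $|D_k\cap B_M|\to0$. Carrying this out is the technical heart of the argument and amounts to adapting the quantitative estimates of \cite{LMPAS} to the present mixed configuration; everything else is the soft reasoning sketched above.
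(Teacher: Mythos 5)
Your proposal follows the paper's implicit argument for this statement: as in the proof of Theorem~\ref{infty}, one bounds $\Lambda_k\le\lambda_{\rho,k}$ via $|x|^{-2s}\ge(|x|^{2s}+\rho)^{-1}$ and then invokes $\lim_{k\to\infty}\lambda_{\rho,k}=0$ from \cite{LMPAS}, which for $s\in[\tfrac12,1)$ is exactly where the extra separation hypothesis $\mathrm{dist}(D_k,\Omega)>\delta$ is required. Your supplementary sketch of a direct competitor construction --- and in particular the observation that the $\mathcal{D}_\Omega$-energy never charges the $\Omega^c\times\Omega^c$ cost of forcing $u_k$ to vanish on $D_k$ --- is a sound heuristic, but, as you acknowledge, it ultimately defers the key construction to \cite{LMPAS} as well, so the logical backbone coincides with the paper's.
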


Let us denote by $\bar{u}\in \mathbb{E}^{s}(\Omega, D)$, the function that realize the minimum in \eqref{hardymix} with $\dyle\io \frac{\bar{u}^2}{|x|^{2s}}dx=1$,
then $\bar{u}$ solves the eigenvalue problem
\begin{equation}\label{autoT}
\left\{
\begin{array}{rcll}
(-\Delta)^s u & = & \l \dfrac{u}{|x|^{2s}} & {\text{ in }}\O,\\
   u\in \mathbb{E}^{s}(\Omega, D), u & > & 0 &{\text{ in }} \Omega, \\
   \mathcal{B}_{s} u & = & 0 &{\text{ in }} \RR^{d}\backslash \O.
\end{array}\right.
\end{equation}
with $\l=\L_N$.

In the next result we show the relevant spectral properties of  $\L_N$ when it is reached.
\begin{Theorem}\label{nonauto}
Assume that $\L_N<\L$, then $\L_N$ is an isolated and simple eigenvalue, that is:
\begin{enumerate}
\item If $v$ is an other solution to problem \eqref{autoT} with $\l=\L_N$, then $v=C \bar{u}$,  $C\in \mathbb{R}$. \item There exists $\e>0$ such that for all
    $\l\in (\L_N, \L_N+\e)$, problem \eqref{autoT} has non nontrivial solution.
\end{enumerate}
\end{Theorem}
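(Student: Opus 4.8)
The two assertions should be proved separately, and in both cases the Picone-type inequality of Theorem \ref{piconeth} is the engine. For the simplicity part (1), the strategy is as follows. Let $v$ be any solution of \eqref{autoT} with $\lambda=\Lambda_N$, and let $\bar u>0$ be the minimizer, which we may normalize by $\int_\Omega \bar u^2/|x|^{2s}\,dx=1$; since $\bar u$ solves \eqref{autoT} it satisfies $(-\Delta)^s\bar u=\Lambda_N\,\bar u/|x|^{2s}\ge 0$ in $\Omega$, and $\bar u>0$ in $\Omega$. One first needs to know that $\bar u>0$ in $\Omega\cup N$, not merely in $\Omega$; this follows from a strong maximum principle argument for the mixed problem, examining $\mathcal N_s\bar u$ on $N$ and using that $\bar u\ge 0$ everywhere with $\bar u>0$ on the bounded set $\Omega$ so that the defining integral in \eqref{Nmn def} is strictly positive at points of $N$ where $\bar u$ vanishes — hence $\mathcal N_s\bar u\ge 0$ on $N$, and combined with $\mathcal B_s\bar u=0$ this forces $\bar u>0$ on $N$ as well (or one argues directly that $\bar u\chi_D+\mathcal N_s\bar u\chi_N=0$ with $\mathcal N_s\bar u>0$ on $\{\bar u=0\}\cap N$ is impossible). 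Then apply \eqref{picone0} with this $\bar u$ and with $v$ (which lies in $\mathbb{E}^s(\Omega,D)$):
\begin{equation*}
\int_N\frac{v^2}{\bar u}\mathcal N_s\bar u\,dx+\int_\Omega\frac{v^2}{\bar u}(-\Delta)^s\bar u\,dx\le\frac{a_{d,s}}{2}\int\int_{\mathcal D_\Omega}(v(x)-v(y))^2\,d\nu.
\end{equation*}
On $N$ one has $\mathcal N_s\bar u=0$ (from $\mathcal B_s\bar u=0$ and $\bar u$ is not required to vanish on $N$ — more precisely $\mathcal N_s\bar u\,\chi_N=-\bar u\,\chi_D$ is supported off $N$, so the boundary integral vanishes), while on $\Omega$ one has $(-\Delta)^s\bar u=\Lambda_N\bar u/|x|^{2s}$, so the left side equals $\Lambda_N\int_\Omega v^2/|x|^{2s}\,dx$. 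The right side, by definition of $\Lambda_N$, is $\ge\Lambda_N\int_\Omega v^2/|x|^{2s}\,dx$. Hence equality holds throughout \eqref{picone0}, and the rigidity clause of Theorem \ref{piconeth} gives $v=C\bar u$ in $\mathbb{R}^d$ for some constant $C$, which is (1).

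For the isolation part (2), I would argue by contradiction. Suppose there is a sequence $\lambda_n\downarrow\Lambda_N$, $\lambda_n>\Lambda_N$, and nontrivial solutions $v_n\in\mathbb{E}^s(\Omega,D)$ of \eqref{autoT} with $\lambda=\lambda_n$. Normalize $\int_\Omega v_n^2/|x|^{2s}\,dx=1$; testing the equation with $v_n$ and using Proposition \ref{BYP} (the boundary term drops because $\mathcal B_s v_n=0$) gives $\|v_n\|^2=\lambda_n\to\Lambda_N$, so $\{v_n\}$ is bounded in $\mathbb{E}^s(\Omega,D)$. Extract $v_n\rightharpoonup v_\infty$ weakly, with $v_n\to v_\infty$ strongly in $L^\sigma(\Omega)$ for $\sigma<2^*_s$ and a.e.; since $2<2^*_s$ and the Hardy weight is integrable against $L^\sigma$ convergence on the bounded set $\Omega$ (or by the compactness already implicit in the $L^\sigma$ convergence away from the pole together with a uniform smallness near the pole coming from the bound $\Lambda_N>0$ exactly as in the proof of Proposition \ref{exist01}), one gets $\int_\Omega v_\infty^2/|x|^{2s}\,dx=1$, so $v_\infty\neq 0$. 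Passing to the limit in the weak formulation, $v_\infty$ solves \eqref{autoT} with $\lambda=\Lambda_N$, hence by part (1) $v_\infty=C\bar u$ with $C\neq 0$, and we may take $C>0$ so that $v_\infty>0$ in $\Omega$. Now test the equation for $v_n$ against $\varphi=\bar u$ and the equation for $\bar u$ against $\varphi=v_n$ (both legitimate by Proposition \ref{BYP}, and the $N$-boundary terms vanish), subtract, and obtain
\begin{equation*}
(\lambda_n-\Lambda_N)\int_\Omega\frac{v_n\bar u}{|x|^{2s}}\,dx=0.
\end{equation*}
Since $\lambda_n>\Lambda_N$, this forces $\int_\Omega v_n\bar u/|x|^{2s}\,dx=0$ for all $n$; but $v_n\to C\bar u$ in the weight (by the same compactness), so the left integrand converges to $C\,\bar u^2/|x|^{2s}$ and $\int_\Omega v_n\bar u/|x|^{2s}\,dx\to C\int_\Omega\bar u^2/|x|^{2s}\,dx=C>0$, a contradiction. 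This closes (2).

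The main obstacle I anticipate is establishing the strict positivity $\bar u>0$ on $\Omega\cup N$ — equivalently, a strong maximum/Hopf-type principle for the mixed nonlocal operator $\mathcal B_s$ — since $N$ may be unbounded and regularity there is delicate (the authors flag this in the introduction). The cleanest route is probably to avoid pointwise regularity on $N$ altogether: apply the Picone inequality knowing only $\bar u\ge 0$ in $\mathbb{R}^d$ and $\bar u>0$ in $\Omega$, treat $\int_N (v^2/\bar u)\mathcal N_s\bar u\,dx$ with the convention that the integrand vanishes where $\bar u=0$ (noting $\mathcal N_s\bar u\ge 0$ there so no negative contribution is lost), and verify that the identity in Theorem \ref{piconeth} and its rigidity case still go through under this weaker hypothesis; then the conclusion $v=C\bar u$ is derived first, and strict positivity of $\bar u$ on $\Omega\cup N$ follows a posteriori by applying the same reasoning with $v=\bar u$ itself or by a separate harmonic-extension/Harnack argument. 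A secondary technical point, recurring in both parts, is the compactness of the embedding $\mathbb{E}^s(\Omega,D)\hookrightarrow L^2(\Omega;|x|^{-2s}dx)$, which is not entirely standard because of the singular weight; but this is exactly the estimate carried out inside the proof of Proposition \ref{exist01} (splitting near and far from the origin, Poincaré away from the pole, and the Hardy inequality with the constant $\Lambda>\Lambda_N$ controlling the near-pole part), so it can be quoted rather than redone.
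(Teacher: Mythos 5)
Your plan for part (1) is close to the paper's but the way you close the Picone argument is circular as written. You say ``the right side, by definition of $\Lambda_N$, is $\ge \Lambda_N\int_\Omega v^2/|x|^{2s}\,dx$'' and then conclude equality. But this is the same inequality that Picone already gives (both say LHS $\le$ RHS), so nothing forces equality. What is actually needed is the elementary but essential observation that $v$ itself is admissible as a test function in its own equation: using Proposition \ref{BYP} with $\mathcal{B}_s v=0$, testing \eqref{autoT} with $v$ gives $\frac{a_{d,s}}{2}\iint_{\mathcal{D}_\Omega}(v(x)-v(y))^2\,d\nu = \Lambda_N\int_\Omega v^2/|x|^{2s}\,dx$, i.e.\ the right side of Picone is \emph{equal to} $\Lambda_N\int_\Omega v^2/|x|^{2s}\,dx$, and then rigidity applies. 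With this fix your route is actually tidier than the paper's: the Picone identity in Theorem \ref{piconeth} is valid for sign-changing $v$, so you do not need the paper's separate treatment of $v_\pm$ (which tests the equation of $v$ with $v_+$ and $v_-$, shows each attains $\Lambda_N$, and applies Picone to each nonnegative piece).

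Part (2) has a genuine gap. The structure (orthogonality $\int_\Omega v_n\bar u/|x|^{2s}\,dx=0$ from testing both equations, plus strong convergence of $\int_\Omega v_n^2/|x|^{2s}\,dx$) is the same as the paper's, but your justification of the compactness step does not hold up. You cannot pass from $v_n\to v_\infty$ in $L^\sigma(\Omega)$, $\sigma<2^*_s$, to $\int_\Omega v_n^2/|x|^{2s}\,dx\to\int_\Omega v_\infty^2/|x|^{2s}\,dx$ by H\"older: for that you would need $|x|^{-2s}\in L^{\sigma/(\sigma-2)}(\Omega)$, which requires $\sigma>2^*_s$, exactly the regime you do not have. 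Nor does $\Lambda_N>0$ give any ``uniform smallness near the pole'' — it is perfectly compatible with a minimizing sequence concentrating at the origin (that is precisely what happens when $\Lambda_N=\Lambda$), and Proposition \ref{exist01} rules this out only for minimizing sequences of $\Lambda_N$, not for eigenfunctions at nearby eigenvalues. The paper's actual mechanism is different and substantive: from $(-\Delta)^s u_n=\lambda_n u_n/|x|^{2s}$ with $\lambda_n\le\Lambda_0<\Lambda$, Kato's inequality plus the sharp pointwise estimate of \cite{AMPA} give a \emph{uniform} bound $|u_n(x)|\le C|x|^{-\alpha_0}$ near the origin with $\alpha_0<\tfrac{d-2s}{2}$ (so $\alpha_0$ strictly subcritical because $\Lambda_0<\Lambda$), whence $u_n^2/|x|^{2s}\le C|x|^{-2\alpha_0-2s}\in L^1(B_\eta)$ is a fixed dominating function and the Dominated Convergence Theorem closes the argument. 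This regularity input from \cite{AMPA}, keyed to $\lambda_n$ being bounded strictly below $\Lambda$, is the missing ingredient; without it the contradiction in your step does not follow.
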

\begin{proof}
Let us begin by proving the first point. Suppose that $v$ is another solution to problem \eqref{autoT} with $\l=\L_N$. If $v\ge 0$, then using the Picone
alternative in Theorem \ref{piconeth}, we conclude that $v=C\bar{u}$ for some $C\ge 0$.

Now, suppose that $v$ change sign, then using $v_+$ (respectively $v_-$) as a test function in the equation of $v$, we obtain that
$$
\L_N\dyle\int_{\Omega}\dfrac{v_\pm^2(x)}{|x|^{2s}+\e}dx \le \dfrac{a_{d,s}}{2}\dyle\int\int_{\mathcal{D}_{\Omega}} |v\pm(x)-v\pm(y)|^2 \,d\nu.
$$
Hence $v\pm$ realize the minimum in \eqref{hardymix} and then they are nonnegative solutions to problem \eqref{autoT} with $\l=\L_N$. Hence we get the existence
of $C_\pm\ge 0$ such that $v_\pm=C_\pm \bar{u}$, thus $v=v_+-v_-=(C_+-C_-)\bar{u}$.

We prove now that $\L_N$ is isolated. Assume the existence of a sequence $\{(\l_n, u_n)\}_n\subset (\L_N, \infty)\times \mathbb{E}^{s}(\Omega, D)$ such that
$\l_n\downarrow \L_N$ and $u_n$ solves the problem \eqref{autoT} with $\l=\l_n$. Without loss of generality we can assume that $\io \frac{u^2_n}{|x|^{2s}}dx=1$
and $\l_n<\L_0<\L_N$ for all $n$. Thus
$$
\dfrac{a_{d,s}}{2}\int\int_{D_\O}(u_n(x)-u_n(y))^2d\nu=\l_n\le \L.
$$
Hence $\{u_n\}_n$ is bounded in $\mathbb{E}^{s}(\Omega, D)$ and then we get the existence of $\hat{u}\in \mathbb{E}^{s}(\Omega, D)$ such that $u_n\rightharpoonup
\hat{u}$ weakly in $\mathbb{E}^{s}(\Omega, D)$, $u_n\to \hat{u}$ strongly in $L^\s(\O)$ for all $\s<2^*$ and a.e. in $\O\cup D$. It is not difficult to show that
$\hat{u}$ solves problem \eqref{autoT} with $\l=\L_N$. Using the previous step we get the existence of a constant $\hat{C}$ such that $\hat{u}=\hat{C}\bar{u}$.

Now, taking $\bar{u}$ as a test function in the equation of $u_n$ it holds that
\begin{equation}\label{orth}
\io \frac{u_n\bar{u}}{|x|^{2s}}dx=0\mbox{  for all  }n,
\end{equation}
hence
\begin{equation}\label{orth0}
\io \frac{\hat{u}\bar{u}}{|x|^{2s}}dx=0.
\end{equation}
Taking into consideration that $\hat{u}=\hat{C}\bar{u}$, it holds that $\hat{C}=0$ and then $\hat{u}=0$.

Going back to the equation of $u_n$ and by Kato inequality it holds that
$$
(-\Delta)^s |u_n| \le \l_n \dfrac{|u_n|}{|x|^{2s}} \mbox{ in }\O.
$$
Since $\l_n\le\L_0$, then by the result of \cite{AMPA}, we conclude that
\begin{equation}\label{estim-zero}
|u_n(x)|\le C|x|^{-\a} \mbox{  in   }B_\eta(0)\subset\subset \O, \mbox{  for all  }n,
\end{equation}
where $\a_0>0$ and $\L_0$ are related with the next identity
\begin{equation}\label{lambda}
\L_0=\L_0(\alpha_0)=\dfrac{2^{2s}\,\Gamma(\frac{d+2s+2\alpha_0}{4})\Gamma(\frac{d+2s-2\alpha_0}{4})}{\Gamma(\frac{d-2s+2\alpha_0}{4})\Gamma(\frac{d-2s-2\alpha_0}{4})}.
\end{equation}
Since $\L_0<\L$, then $\a_0<\frac{d-2s}{2}$, thus $\dfrac{|u_n|^2}{|x|^{2s}}\le C|x|^{-2\a_0-2s}\in L^1(B_\eta(0))$. By the Dominated Convergence Theorem we reach
that
$$
1=\io \frac{u^2_n}{|x|^{2s}}dx\to \io \frac{\hat{u}^2}{|x|^{2s}}dx \mbox{   as   }n\to \infty,
$$
a contradiction with the fact that $\hat{u}=0$, hence we conclude.
\end{proof}

\subsection{ If $\Lambda_{N}=\Lambda$, then $\Lambda_N$ is not attained}\label{-}
We prove the following result that complete the proof of Theorem \ref{exist00}.
\begin{Proposition} In the hypotheses of Theorem \ref{exist00} assume that $\Lambda_{N}=\Lambda$, then $\Lambda_N$ is not attained.
\end{Proposition}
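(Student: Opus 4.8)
The plan is to argue by contradiction. If $\L_N=\L$ were attained, the minimizer would be a positive function of the energy space solving, near the origin, the \emph{critical} fractional Hardy equation, and the sharp behaviour of positive solutions of that equation near the pole is incompatible with the finiteness of $\io \bar u^{2}|x|^{-2s}\,dx$. Thus the argument splits into a soft reduction step and a sharp local step, the latter being the heart of the matter.

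For the reduction, suppose $\L_N=\L$ is attained at some $\bar u\in\mathbb{E}^{s}(\Omega,D)$, $\bar u\neq 0$. Since $\big||\bar u(x)|-|\bar u(y)|\big|\le|\bar u(x)-\bar u(y)|$, the function $|\bar u|$ is again a minimizer, so I may assume $\bar u\ge0$ in $\re^{d}$; after normalizing, $\io \bar u^{2}|x|^{-2s}\,dx=1$ and $\frac{a_{d,s}}{2}\int\int_{\mathcal{D}_\Omega}|\bar u(x)-\bar u(y)|^{2}\,d\nu=\L$. Exactly as in the last part of the proof of Proposition \ref{exist01} (Ekeland's principle \cite{Ek}, passage to the limit in the Euler--Lagrange identity, and testing with $\bar u$ itself), $\bar u$ is a nonnegative weak solution of \eqref{autoT} with $\l=\L$; in particular $(-\Delta)^{s}\bar u=\L\,\bar u|x|^{-2s}\ge0$ in $\O$. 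As $\bar u\ge0$ in $\re^{d}$ and $\bar u\not\equiv0$, the strong maximum principle gives $\bar u>0$ in $\O$; and since $\mathcal{N}_{s}\bar u=0$ on $N$ exhibits $\bar u(x)$, for $x\in N$, as a strictly positive weighted average of $\bar u\big|_{\O}$, one also gets $\bar u>0$ on $\O\cup N$ (so Theorem \ref{piconeth} applies with $u=\bar u$, which re-proves $\L_N\ge\L$ and that the minimizers form the line $\re\bar u$).

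For the local step, fix $\rho>0$ with $\overline{B_{2\rho}(0)}\subset\O$, so that $\bar u$ is a positive weak solution of $(-\Delta)^{s}\bar u=\L\,\bar u|x|^{-2s}$ in $B_{2\rho}(0)$ with $\L$ the optimal constant \eqref{bestC}. For subcritical constants $\L_0<\L$ one has the upper bounds $|u(x)|\le C|x|^{-\alpha_0}$, with $\L_0=\L_0(\alpha_0)$ as in \eqref{lambda} and $\alpha_0<\tfrac{d-2s}{2}$ (see \cite{AMPA}); at the critical value $\L$ the two admissible exponents of \eqref{lambda} collapse to $\alpha_0=\tfrac{d-2s}{2}$ and the two local homogeneous behaviours $|x|^{-\frac{d-2s}{2}}$ and $|x|^{-\frac{d-2s}{2}}\log\frac1{|x|}$ merge, so that a positive solution necessarily satisfies a matching lower bound $\bar u(x)\ge c\,|x|^{-\frac{d-2s}{2}}$ on some punctured ball $B_{\eta}(0)\setminus\{0\}$, $c>0$. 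Then
$$
1=\io \frac{\bar u^{2}}{|x|^{2s}}\,dx\ \ge\ c^{2}\int_{B_{\eta}(0)}\frac{dx}{|x|^{\,d-2s}\,|x|^{\,2s}}\ =\ c^{2}\int_{B_{\eta}(0)}\frac{dx}{|x|^{\,d}}\ =\ +\infty,
$$
a contradiction. Hence $\L_N=\L$ is not attained, which together with Proposition \ref{exist01} completes the proof of Theorem \ref{exist00}.

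The main obstacle is the sharp lower bound $\bar u(x)\gtrsim|x|^{-\frac{d-2s}{2}}$ near $0$, equivalently the non-integrability $\io \bar u^{2}|x|^{-2s}\,dx=+\infty$ for every positive solution of the critical Hardy equation. One route is a nonlocal comparison argument on $B_{\rho}(0)$ between $\bar u$ and well-chosen truncations of the explicit singular solution $|x|^{-\frac{d-2s}{2}}$ of $(-\Delta)^{s}w=\L\,w|x|^{-2s}$ in $\re^{d}\setminus\{0\}$ (using $\bar u\ge0$ in $\re^{d}$ and, by interior regularity and positivity, $\bar u\ge m>0$ on the annulus $B_{2\rho}\setminus B_{\rho}$), pushed up to the borderline exponent; another is to invoke the fine asymptotic expansion of positive solutions at an isolated Hardy singularity at the critical constant, where the logarithmic correction appears, as in the analysis underlying \cite{AMPA} and \cite{DMPS}. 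A purely soft argument should not suffice: cutting $\bar u$ off near $0$ and inserting the result into the non-attained full-space Hardy inequality \eqref{Hardyint}--\eqref{hardy} only reproduces the identity $\L_N=\L$, precisely because in this regime no improved Hardy inequality holds on $\mathbb{E}^{s}(\Omega,D)$; the contradiction must therefore use that $\bar u$ is a genuine function with a prescribed profile at the pole, and not merely the positivity of the associated quadratic form.
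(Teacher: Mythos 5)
Your strategy matches the paper's: assume $\L_N=\L$ is attained, show the minimizer would be forced to blow up like $|x|^{-(d-2s)/2}$ at the pole, and obtain a contradiction with finiteness of the energy. The reduction step (passing to $|\bar u|$, Euler--Lagrange equation, positivity) and the final integrability contradiction are both sound; the paper contradicts $w\in L^{2^*_s}(\O)$ whereas you contradict $\io \bar u^2|x|^{-2s}\,dx<\infty$, but once the lower bound is available both are immediate.

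The genuine gap is that you never actually establish the lower bound $\bar u(x)\ge c\,|x|^{-(d-2s)/2}$ near $0$; you correctly flag it as the heart of the matter and sketch two possible routes without carrying either one through. The paper closes this gap with a specific construction. Fix $B_r(0)\subset\subset\O$ and let $v$ be the $s$-harmonic replacement of $\bar u$ in $B_r(0)$, with exterior datum equal to $\bar u$ on $\O\setminus B_r(0)$ and $0$ on $\re^d\setminus\O$; by comparison $0\le v\le \bar u$. Then $w=\bar u-v$ is nonnegative in $\re^d$, belongs to $H^s$, vanishes on $\O\setminus B_r(0)$, and satisfies $(-\Delta)^s w=\L\,\bar u\,|x|^{-2s}\ge\L\,w\,|x|^{-2s}$ near the origin. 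In this cleaned-up form the lower-bound estimate of \cite{AMPA} applies and yields $w(x)\ge C_1|x|^{-(d-2s)/2}$ on a small ball, hence $\int_{B_{r_0}}w^{2^*_s}\,dx=\infty$, contradicting $w\in H^s$. Subtracting the regular $s$-harmonic part $v$ is not a cosmetic move: it produces a nonnegative supersolution of the critical Hardy inequality that is localized exactly as the quoted result of \cite{AMPA} requires, whereas your write-up invokes the \cite{AMPA} profile directly on the minimizer of the mixed Dirichlet--Neumann problem without justification. Moreover, your explanation that the two admissible exponents in \eqref{lambda} ``collapse'' at $\L$ and therefore a positive solution ``necessarily'' has the borderline profile is a plausibility argument, not a proof; the logarithmic correction you mention is precisely the possibility that has to be excluded, and it is the comparison argument (not the mere coincidence of exponents) that excludes it. So: right idea, right contradiction, but the key analytic lemma is asserted rather than proved, and the paper's auxiliary function $w=\bar u-v$ is the device you were missing.
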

\begin{proof} We argue by contradiction. Assume that $\L_N$ is attained in $\mathbb{E}^{s}(\Omega, D)$, then there exists $u\in \mathbb{E}^{s}(\Omega, D)$ such
that
\begin{equation*}
\left\{
\begin{array}{rcll}
(-\Delta)^s u & = & \Lambda_{N} \dfrac{u}{|x|^{2s}} & {\text{ in }}\O,\\
   u\in \mathbb{E}^{s}(\Omega, D) , u & > & 0 &{\text{ in }} \Omega, \\
   \mathcal{B}_{s} u & = & 0 &{\text{ in }} \RR^{d}\backslash \O.
\end{array}\right.
\end{equation*}
Let $B_r(0)\subset\subset \O$ and define $v$ to be the unique solution of the problem
\begin{equation*}
\left\{
\begin{array}{rcll}
(-\Delta)^s v & = & 0 & {\text{ in }}\O,\\
   v & = & v_0 &{\text{ in }} \re^{d}\backslash B_r(0),
\end{array}\right.
\end{equation*}
where
$$
v_0(x)= \left\{
\begin{array}{rcll}
u(x) & \mbox{  if  } & x\in \O\backslash B_r(0),\\ 0 & \mbox{  if  } & x\in \re^{d}\backslash \O,
\end{array}\right.
$$
it is clear that $u\ge v$. Setting $w=u-v$ then $w\ge 0$ in $\re^d, w\in H^s(\O)$ and it solves
\begin{equation*}
\left\{
\begin{array}{rcll}
(-\Delta)^s w & = & \Lambda \dfrac{u}{|x|^{2s}}=\Lambda \dfrac{w}{|x|^{2s}}+\Lambda \dfrac{v}{|x|^{2s}} & {\text{ in }}\O,\\
      w & \ge & 0 &{\text{ in }} \RR^{d}\backslash \O.
\end{array}\right.
\end{equation*}
From \cite{AMPA} we know that $w(x)\ge C_1|x|^{-\frac{d-2s}{2}},\  x\in B_{r_0}(0)\subset\subset B_r(0)$, hence
$$\infty=C_1\int_{B_{r_0}(0)} |x|^{-d}\,dx=C_1\int_{B_{r_0}(0)} |x|^{-2^*_s\frac{d-2s}{2}}\,dx \le \int_{B_{r_0}(0)} w^{2^*_s}\,dx$$
which is a contradiction with  the fact that $w\in H^s(\O)$. Hence the result follows.
\end{proof}

\subsubsection{ Examples for which we find $\L_N=\L$}\label{sec:non}
In this subsection we give some geometrical condition to ensure that $\L_N=\L$.
We have the next result.
\begin{Theorem}\label{noexisthardy}
Let $\O\subset\re^d$ be a smooth bounded domain such that $0 \in\O$. Let $w(x)=|x|^{-\frac{d-2s}{2}}$ and suppose that $\mathcal{N}_sw(x)\ge 0$ for all $x\in N$,
then $\L_N=\L$, moreover, the problem
\begin{equation}\label{PSigma1}
\left\{
\begin{array}{rcll}
(-\Delta)^s u & = & \Lambda_{N} \dfrac{u}{|x|^{2s}} & {\text{ in }}\O,\\
   u\in \mathbb{E}^{s}(\Omega, D), u & > & 0 &{\text{ in }} \Omega, \\
   \mathcal{B}_{s} u & = & 0 &{\text{ in }} \RR^{d}\backslash \O,
\end{array}\right.
\end{equation}
 has no solution.
\end{Theorem}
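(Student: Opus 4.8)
The plan is to show $\L_N = \L$ by constructing an explicit minimizing sequence built from the Hardy extremal $w(x) = |x|^{-(d-2s)/2}$, and then to rule out attainability by the same blow-up argument used in Subsection \ref{-}. Recall that $w$ is not in $H^s(\RR^d)$ (it fails integrability both near the origin and at infinity), so one cannot test directly with $w$; instead I would truncate. The key structural input is the hypothesis $\mathcal{N}_s w(x) \ge 0$ on $N$, which is what forces the boundary term to have the correct sign.

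First I would fix cutoffs: for small $\e>0$ and large $R>0$, pick $\psi_{\e,R}\in C^\infty$ with $\psi_{\e,R}\equiv 1$ on $B_R(0)\setminus B_{2\e}(0)$, $\psi_{\e,R}\equiv 0$ on $B_\e(0)\cup(\RR^d\setminus B_{2R}(0))$, and set $w_{\e,R} = \psi_{\e,R}\, w$. One checks $w_{\e,R}\in H^s_0(\O)$ for $R$ small enough that $B_{2R}(0)\subset\O$, hence $w_{\e,R}\in\mathbb{E}^s(\Omega,D)$. The standard computation (as in the classical proof that $\L$ is the optimal non-attained Hardy constant, e.g. the fractional analogue of the Brezis--Lieb / capacity-type estimate) gives
\begin{equation*}
\frac{a_{d,s}}{2}\int\int_{\mathcal{D}_\O}|w_{\e,R}(x)-w_{\e,R}(y)|^2\,d\nu = \L\int_\O \frac{w_{\e,R}^2}{|x|^{2s}}\,dx + o\!\left(\int_\O \frac{w_{\e,R}^2}{|x|^{2s}}\,dx\right)
\end{equation*}
as $\e\to 0$ and $R\to 0$, because the localization error terms are controlled by lower-order integrals of $w$ over annuli while $\int_\O w_{\e,R}^2|x|^{-2s}\,dx\to\infty$. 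Since $w_{\e,R}$ is a legitimate test function, this yields $\L_N\le\L$, and combined with the previous Theorem's $\L_N\le\L$ — wait, combined with $\L_N\le\L$ already known, I in fact only need the reverse, but $\L_N\le\L$ holds trivially from $H^s_0(\O)\subset\mathbb{E}^s(\Omega,D)$; so the content here is that this particular family shows the infimum cannot drop below $\L$ once we incorporate the sign condition on $N$. Concretely: for an arbitrary competitor $u\in\mathbb{E}^s(\Omega,D)$, decomposing $u=\varphi u+(1-\varphi)u$ near the origin as in the proof of the first Theorem and using the Picone inequality of Theorem \ref{piconeth} with the supersolution $w$ (valid because $(-\Delta)^s w = \L w|x|^{-2s}\ge 0$ in $\O$, $w>0$ in $\O\cup N$, and $\mathcal{N}_s w\ge 0$ on $N$ makes the boundary term nonnegative) gives $\L\int_\O \frac{(\varphi u)^2}{|x|^{2s}}\,dx \le \frac{a_{d,s}}{2}\int\int_{\mathcal{D}_\O}(\varphi u(x)-\varphi u(y))^2\,d\nu$, which combined with the localization estimates from that first Theorem yields $\L\le \L_N$. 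This is the step I expect to be the main obstacle: verifying that the Picone boundary term $\int_N \frac{|v|^2}{w}\mathcal{N}_s w\,dx$ is finite and nonnegative simultaneously, and that the cutoff errors are genuinely negligible against the divergent Hardy integral rather than merely bounded.

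Finally, to show $\L_N$ is not attained, suppose for contradiction a minimizer $u\in\mathbb{E}^s(\Omega,D)$ exists, solving \eqref{PSigma1}. Then I repeat verbatim the argument of the Proposition in Subsection \ref{-}: pick $B_r(0)\subset\subset\O$, let $v$ be the $s$-harmonic replacement of $u$ outside $B_r(0)$ with exterior datum $v_0 = u\,\chi_{\O\setminus B_r(0)}$, set $w_0 = u-v\ge 0$, which satisfies $(-\Delta)^s w_0 = \L w_0|x|^{-2s} + \L v|x|^{-2s}\ge \L w_0|x|^{-2s}$ in $\O$ and $w_0\ge 0$ outside. The lower bound from \cite{AMPA} forces $w_0(x)\ge C|x|^{-(d-2s)/2}$ near $0$, hence $\int_{B_{r_0}(0)} w_0^{2^*_s}\,dx \ge C\int_{B_{r_0}(0)} |x|^{-d}\,dx = \infty$, contradicting $w_0\in H^s(\O)\hookrightarrow L^{2^*_s}$. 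This completes the proof.
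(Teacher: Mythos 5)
Your plan has a structural gap in the step that is supposed to produce $\L\le\L_N$. You apply the Picone inequality (Theorem \ref{piconeth}) with the base $w$ and the test function $v=\varphi u$, where $\varphi$ is compactly supported in $\O$. But then $v\equiv 0$ on $N$, so the boundary term $\int_N\frac{v^2}{w}\,\mathcal{N}_sw\,dx$ is identically zero, \emph{regardless} of the sign of $\mathcal{N}_sw$, and the hypothesis $\mathcal{N}_sw\ge 0$ on $N$ is never used. What survives of the Picone inequality is precisely the fractional Hardy inequality \eqref{hardy} for $H^s_0(\O)$ functions, which carries no information about $\L_N$. The subsequent step ``combined with the localization estimates $\ldots$ yields $\L\le\L_N$'' also does not close: the cutoff errors in those estimates are controlled by the Gagliardo seminorm of $u$, not by a quantity that can be absorbed at no loss of constant, so one only recovers $\L/(1+C)\le\L_N$ for some $C>0$, not the sharp bound. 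Thus your argument, as written, cannot prove $\L_N\ge\L$ at all, let alone use the hypothesis that makes the statement true.

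The fix is to take $v=u$, the full competitor, not $\varphi u$. Then the Picone inequality reads
\begin{equation*}
\int_N\frac{u^2}{w}\,\mathcal{N}_sw\,dx+\L\int_\O\frac{u^2}{|x|^{2s}}\,dx\le\dfrac{a_{d,s}}{2}\int\int_{\mathcal{D}_\O}(u(x)-u(y))^2\,d\nu,
\end{equation*}
and the boundary term is nonnegative by hypothesis, yielding $\L\le\L_N$ immediately with no localization at all. But now the obstacle you yourself flagged is genuinely in the way: $w\notin\mathbb{E}^s(\O,D)$, so Theorem \ref{piconeth} does not literally apply; the integrals $\int\int w(x)w(y)|v_1(x)-v_1(y)|^2\,d\nu$ and $\int_N\frac{u^2}{w}\,\mathcal{N}_sw\,dx$ have to be shown meaningful, which requires a truncation argument you do not supply. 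The paper sidesteps this entirely by arguing by contradiction: assume $\L_N<\L$, obtain a minimizer $u_1\in\mathbb{E}^s(\O,D)$ from Theorem \ref{exist00}, set $v_1=u_1/w$, and expand $\frac{a_{d,s}}{2}\int\int_{\mathcal{D}_\O}|v_1(x)-v_1(y)|^2w(x)w(y)\,d\nu$ algebraically. After an integration by parts (Proposition \ref{BYP}) and use of $(-\De)^sw=\L w|x|^{-2s}$ in $\O$, this equals $\frac{a_{d,s}}{2}\int\int_{\mathcal{D}_\O}|u_1(x)-u_1(y)|^2\,d\nu-\L\int_\O\frac{u_1^2}{|x|^{2s}}\,dx-\int_N\frac{u_1^2}{w}\,\mathcal{N}_sw\,dx$. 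Using that $u_1$ is a minimizer, the right side is $(\L_N-\L)\int_\O\frac{u_1^2}{|x|^{2s}}\,dx-\int_N\frac{u_1^2}{w}\,\mathcal{N}_sw\,dx<0$, while the left side is nonnegative; contradiction. Here the relevant integrals are controlled by $u_1\in\mathbb{E}^s(\O,D)$, so the issue of $w$ not lying in the space never arises.

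Your non-attainability step (repeating the argument of Subsection \ref{-}) is correct, but once $\L_N=\L$ is established this is already the content of Theorem \ref{exist00} and can simply be cited; the paper does exactly that.
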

\begin{proof}
We argue by contradiction. Assume that $\mathcal{N}_sw(x)\ge 0$ for all $x\in N$ and that $\L_N<\L$. Then by Theorem \ref{exist00}, we get the existence of
$u_1\in \mathbb{E}^{s}(\Omega, D)$ a positive solution to problem \eqref{PSigma1}. Setting $v_1(x)=\dfrac{u_1(x)}{w(x)}$, then
$$ |v_1(x)-v_1(y)|^2\, w(x) w(y)=|u_1(x)-u_1(y)|^2+u_1^2(x)(\dfrac{w(y)}{w(x)}-1)+u_1^2(y)(\dfrac{w(x)}{w(y)}-1).$$ Thus
\begin{eqnarray*}
&\dyle \dfrac{a_{d,s}}{2}\int\int_{\mathcal{D}_{\Omega}} |v_1(x)-v_1(y)|^2\, w(x) w(y)\,d\nu =\dfrac{a_{d,s}}{2}\int\int_{\mathcal{D}_{\Omega}} |u_1(x)-u_1(y)|^2\,d\nu\\
&\dyle +\dfrac{a_{d,s}}{2}\int\int_{\mathcal{D}_{\Omega}} u^2_1(x)\dfrac{(w(y)-w(x))}{w(x)}\,d\nu+\dfrac{a_{d,s}}{2}\int\int_{\mathcal{D}_{\Omega}} u^2_1(y)\dfrac{(w(x)-w(y))}{w(y)}\,d\nu.
\end{eqnarray*}
According to the symmetry of last two terms of  the above identity, we obtain that
\begin{eqnarray*}
&\dyle \dfrac{a_{d,s}}{2}\int\int_{\mathcal{D}_{\Omega}} |v_1(x)-v_1(y)|^2\, w(x)w(y)\,d\nu =\dfrac{a_{d,s}}{2}\int\int_{\mathcal{D}_{\Omega}} |u_1(x)-u_1(y)|^2\,d\nu\\
&\dyle -\dfrac{a_{d,s}}{2}\int\int_{\mathcal{D}_{\Omega}} \bigg(\frac{u^2_1(x)}{w(x)}-\frac{u^2_1(y)}{w(y)}\bigg)(w(y)-w(x))\,d\nu.
\end{eqnarray*}
Notice that
$$ \frac{a_{d,s}}{2}\int\int\limits_{\mathcal{D}_{\Omega}}
\bigg(\frac{u^2_1(x)}{w(x)}-\frac{u^2_1(y)}{w(y)}\bigg)(w(y)-w(x))\,d\nu=\int_{\O}\frac{u^2_1(x)}{w(x)}(-\Delta)^sw\,dx+ \int_{N}\frac{u^2_1(x)}{w(x)}\mathcal{N}_sw(x)\,dx.$$
Therefore,
$$
\frac{a_{d,s}}{2}\int\int\limits_{\mathcal{D}_{\Omega}}
\bigg(\frac{u^2_1(x)}{w(x)}-\frac{u^2_1(y)}{w(y)}\bigg)(w(x)-w(y))\,d\nu=\L\int_{\Omega}\dfrac{u^2_1(x)}{|x|^{2s}}dx+
\int_{N}\frac{u^2_1(x)}{w(x)}\mathcal{N}_sw(x)\,dx.
$$
Hence, it holds that
\begin{eqnarray*}
&\dyle \dfrac{a_{d,s}}{2}\int\int_{\mathcal{D}_{\Omega}} |v_1(x)-v_1(y)|^2\, w(x) w(y)\,d\nu+\int_{N}\frac{u^2_1(x)}{w(x)}\mathcal{N}_sw(x)\,dx\\ &=\dyle
\dfrac{a_{d,s}}{2}\int\int_{\mathcal{D}_{\Omega}} |u_1(x)-u_1(y)|^2\,d\nu-\L\int_{\Omega}\dfrac{u^2_1(x)}{|x|^{2s}}dx.
\end{eqnarray*}
Since
$$
\dfrac{a_{d,s}}{2}\int\int_{\mathcal{D}_{\Omega}} |u_1(x)-u_1(y)|^2\,d\nu=\L_N\int_{\Omega}\dfrac{u^2_1(x)}{|x|^{2s}}dx,$$ we deduce that
$$
\dyle \dfrac{a_{d,s}}{2}\int\int_{\mathcal{D}_{\Omega}} |v_1(x)-v_1(y)|^2\, w(x) w(y)\,d\nu+\int_{N}\frac{u^2_1(x)}{w(x)}\mathcal{N}_sw(x)\,dx=0.
$$
Thus, if $\mathcal{N}_s(w(x))\ge 0$ for all $x\in N$, it follows that $v_1=0$ and we get a contradiction. Hence we conclude.
\end{proof}
Here we give an explicit bounded domain where the above situation holds.

Define the set $\Omega=\Omega_1\cup \Omega_2\cup \Omega_3$ where
$$
\Omega_1= B_\e(0),\, \Omega_2= \big\{x\in\re^d, \e\le x_1\le A\mbox{  and  }|(x_2,x_3,...,x_d)|<\e\big\},\;\O_3=\big\{x\in\re^d,\, A<|x|<\beta\big\}.
$$
Now, we consider
$$
\begin{array}{lll}
D &= & \bigg\{x\in\re^{d}\backslash \O , \e<|x|<\eta\bigg\}\cup \bigg\{x\in\re^d, \eta\le x_1\le A\mbox{  and  }\e<|(x_2,x_3,...,x_d)|<m\bigg\}\\ &\cup &
\bigg\{x\in \re^d,\,|x|>\beta\bigg\},\end{array}
$$ and
$$ N=\big\{x\in\re^{d}\backslash \{\O\cup D\} , \eta<|x|<A\big\}.
$$
It is clear that $\Omega$ is a bounded domain of $\re^d$,  $N$ and $D$ are two open sets of $\re^{d}\backslash\bar{\Omega}$ with $N \cap D=\emptyset$ and
$\overline{N}\cup \overline{D}= \re^{d}\backslash\Omega$.

\begin{figure}[h!]
\centering
\includegraphics*[width=12cm, height=7cm]{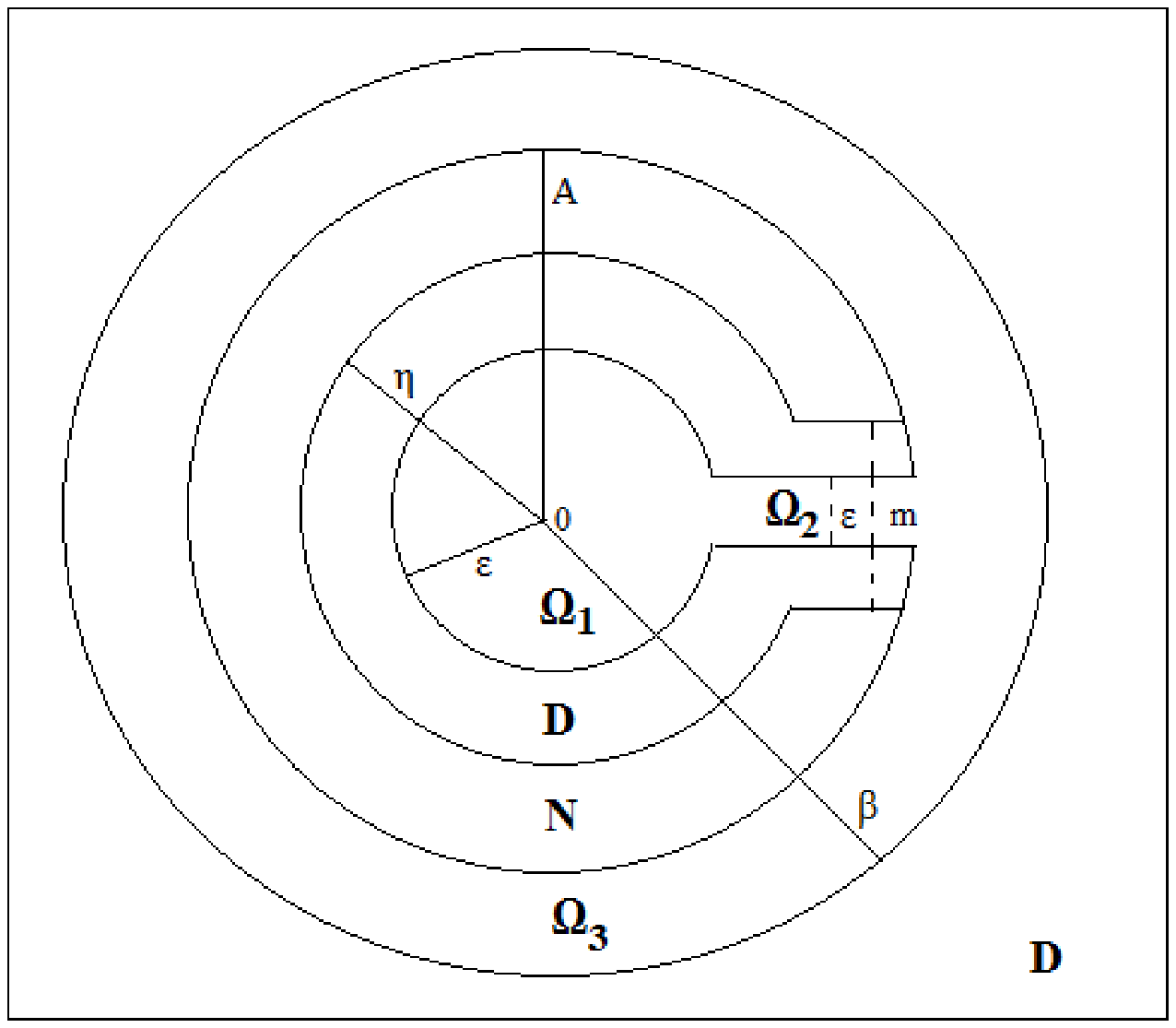}\\
\caption{Example 1}\label{Exemple}
\end{figure}
To prove that $\L_N=\L$, we will show the existence of $\e_0$ such that if $\e\le \e_0$, then $\mathcal{N}_sw(x)\ge 0$ for all $x\in N$. Notice that
\begin{eqnarray*}
\mathcal{N}_sw(x) &=& \int_{\Omega} \dfrac{(w(x)-w(y))}{|x-y|^{d+2s}}\,dy\\ &=& \int_{\Omega_1} \dfrac{(w(x)-w(y))}{|x-y|^{d+2s}}\,dy+\int_{\Omega_2}
\dfrac{(w(x)-w(y))}{|x-y|^{d+2s}}\,dy+\int_{\Omega_3} \dfrac{(w(x)-w(y))}{|x-y|^{d+2s}}\,dy\\ &=& J_1+J_2+J_3.
\end{eqnarray*}
The main idea is to choose $\e$ small in order to reach the above condition. Since $x\in N$, then $\eta\le |x|\le A$.

Let us begin by estimating $J_1$. Setting $y=|y|y'$  and $x=|x|x'$, it holds that
\begin{eqnarray*}
J_1 &= & \int_{\Omega_1}\dfrac{(w(x)-w(y))}{|x-y|^{d+2s}}\,dy=\int_{B(0,\e)}\dfrac{|x|^{-\a_0}-|y|^{-\a_0}}{|x-y|^{d+2s}}\,dy\\ &=&
\int_0^\e(|x|^{-\a_0}-\rho^{-\a_0})\rho^{d-1}\bigg(\dint\limits_{|y'|=1}\dfrac{dH^{d-1}(y')}{||x|x'-\rho y'|^{d+2s}}\bigg)d\rho
\end{eqnarray*}
where $\rho=|y|$. Let $\s=\frac{\rho}{|x|}$, then following closely the radial computation as in  \cite{FV}, it follows that
$$
J_1=\frac{1}{|x|^{2s+\a_0}}\int_{0}^{\frac{\e}{|x|}}(1-\s^{-\a_0})\sigma^{d-1}K(\sigma)d\sigma
$$
where
$$
K(\s)=\dint\limits_{|y'|=1}\dfrac{dH^{d-1}(y')}{|x'-\s y'|^{d+2s}}=2\frac{\pi^{\frac{d-1}{2}}}{\Gamma(\frac{d-1}{2})}\int_0^\pi
\frac{\sin^{d-2}(\theta)}{(1-2\sigma \cos (\theta)+\sigma^2)^{\frac{d+2s}{2}}}d\theta.
$$
Choosing $\e<<\eta$, there results that $\frac{\e}{|x|}\le \frac{\e}{\eta}<<1$, hence
$$|J_1|=\frac{1}{|x|^{\a_0+2s}}\int_{0}^{\frac{\e}{|x|}}(1-\s^{\a_0})\s^{d-\a_0-1}K(\s)d\s=o(\e).$$
We deal now with $J_2$. Without loss of generality we will assume that $\e<\min\{\frac{\eta}{4}, \frac{m}{4}\}$ and fix $\varrho=\min\{\frac{\eta}{3},
\frac{m}{3}\}$. It is clear that for all $x\in N$ and for all $y\in \O_2$, we have $|x-y|\ge \varrho$. Thus
\begin{eqnarray*}
|J_2| &\le &\int_{\Omega_2\cap |x-y|\ge \varrho}\dfrac{|w(x)-w(y)|}{|x-y|^{d+2s}}\,dy\le
\frac{C}{\varrho^{d+2s}}\int_{\Omega_2}\bigg|\eta^{-\a_0}-|y|^{-\a_0}\bigg|dy.
\end{eqnarray*}
Since $|y|^{-\a_0}\in L^1_{loc}(\re^d)$, then by the Dominated Convergence Theorem it holds $|J_2|=o(\e).$

We deal now with $J_3$. Following closely the computation of $J_1$, we reach that
$$J_3=\frac{1}{|x|^{\a_0+2s}}\int_{\frac{A}{|x|}}^{\beta}(\s^{\a_0}-1)\s^{d-\a_0-1}K(\s)d\s\ge
\frac{1}{A^{\a_0+2s}}\int_{2}^{\beta}(\s^{\a_0}-1)\s^{d-\a_0-1}K(\s)d\s.$$
Choosing $\beta>>2$ and combining the above estimates, we conclude that
$$\mathcal{N}_s(w(x))\ge \frac{1}{A^{\a_0+2s}}\int_{2}^{\beta}(\s^{\a_0}-1)\s^{d-\a_0-1}K(\s)d\s-o(\e).$$
Hence we conclude.

We have also the next example where the constant $\L_N=\L$ and then it is not attained.
\begin{figure}[h!]
\centering
\includegraphics*[width=12cm, height=6cm]{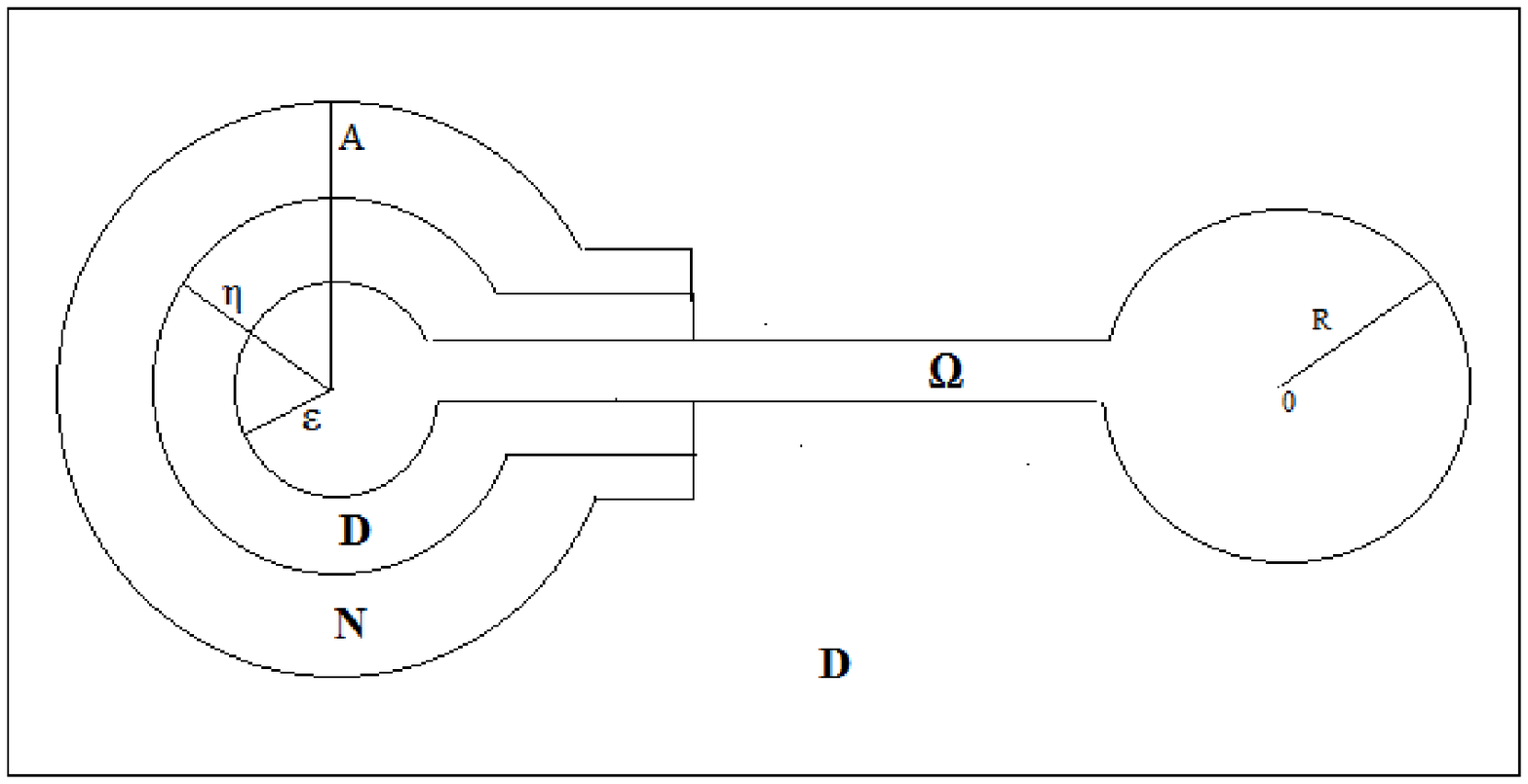}\\
\caption{Example 2}
\end{figure}

\

\section{Semilinear mixed problem involving the Hardy potential}
In this section we assume that $\Lambda_N<\Lambda$, that is, $\L_N$, is the principal eigenvalue for the corresponding mixed problem. We will consider the following nonlinear problem
\begin{equation}\label{bifur}
\left\{
\begin{array}{rcll}
(-\Delta)^s u & = & \lambda \dfrac{u}{|x|^{2s}}+ u^{p} & {\text{ in }}\O,\\
   u & > & 0 &{\text{ in }} \Omega, \\
   \mathcal{B}_{s}u & = & 0 &{\text{ in }} \RR^{d}\backslash \O,
\end{array}\right.
\end{equation}
where $1<p\le 2^*-1$ and $\l<\L_N$.

\subsection{Subcritical problems,  $1<p< 2^*-1$}
The next result is a direct consequence of Theorem \ref{nonauto} and the classical Rabinowitz bifurcation Theorem, see \cite{Rabin}.
\begin{Theorem}
Assume that the above hypotheses hold, the problem \eqref{bifur} has an unbounded branch $\Sigma$ of positive solutions bifurcating from $(0, \L_N)$.
\end{Theorem}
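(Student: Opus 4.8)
The plan is to recast \eqref{bifur} as an abstract bifurcation equation in the Hilbert space $\mathbb{E}^{s}(\Omega,D)$ and then apply the global bifurcation theorem of Rabinowitz \cite{Rabin}. For $f$ in the dual of $\mathbb{E}^{s}(\Omega,D)$, let $\mathcal{K}(f)\in \mathbb{E}^{s}(\Omega,D)$ denote the unique weak solution of $(-\Delta)^s v=f$ in $\O$, $\mathcal{B}_s v=0$ in $\re^d\setminus\O$, characterized through Proposition \ref{BYP} by $\langle v,\varphi\rangle=\int_{\O}f\varphi\,dx$ for all $\varphi\in \mathbb{E}^{s}(\Omega,D)$; existence and uniqueness follow from the Poincar\'e inequality of Proposition \ref{norm eqv} and Lax--Milgram. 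Then a function $u$ is a positive weak solution of \eqref{bifur} if and only if it solves
$$
u=\lambda A u+G(u),\qquad A u:=\mathcal{K}\!\left(\dfrac{u}{|x|^{2s}}\right),\quad G(u):=\mathcal{K}\!\left((u_{+})^{p}\right),
$$
where we have replaced $u^{p}$ by $(u_{+})^{p}$: testing this equation with $u_{-}$ and using $\lambda<\L_N$ together with the definition of $\L_N$ shows that every nontrivial solution is nonnegative, and the strong maximum principle for the mixed operator (as in \cite{AMPA}) then forces $u>0$ in $\O$, so it indeed solves \eqref{bifur}.

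First I would verify that $A$ and $G$ are well defined and compact on $\mathbb{E}^{s}(\Omega,D)$. For $A$ this amounts to the compactness of the embedding $\mathbb{E}^{s}(\Omega,D)\hookrightarrow L^2(\O,|x|^{-2s}dx)$: away from the origin it is the classical Rellich embedding, while near the origin one splits $u=\varphi u+(1-\varphi)u$ exactly as in the proof that $\L_N>0$ in Section \ref{sec:Hardy}, applying the Hardy inequality \eqref{hardy} to $\varphi u\in H^s_0(\O)$ and a small-ball argument to control the part carrying the singular weight. Since $A$ is, in addition, symmetric and positive, its characteristic values are precisely the eigenvalues of problem \eqref{autoT}, which form a discrete sequence $0<\L_N=\mu_1<\mu_2\le\mu_3\le\cdots\to\infty$, and by Theorem \ref{nonauto}(1) the first one $\mu_1=\L_N$ has multiplicity one, in particular odd. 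For $G$, the subcritical range $p<2^*_s-1$ gives the compact embedding $\mathbb{E}^{s}(\Omega,D)\hookrightarrow L^{p+1}(\O)$ (Proposition \ref{sobolev} plus interpolation with $L^2$), and since $u\mapsto(u_{+})^{p}$ is continuous from $L^{p+1}(\O)$ into $L^{(p+1)/p}(\O)$ and $\mathcal{K}$ is bounded from $L^{(p+1)/p}(\O)$ into $\mathbb{E}^{s}(\Omega,D)$, the map $G$ is compact; moreover $\|G(u)\|=o(\|u\|)$ as $u\to0$ because $p>1$. Hence $\lambda\mapsto \lambda A+G$ has exactly the structure required by Rabinowitz's theorem.

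By \cite{Rabin}, the connected component $\Sigma$ of the closure of $\{(\lambda,u):u=\lambda Au+G(u),\ u\neq0\}$ containing the bifurcation point $(\lambda,u)=(\L_N,0)$ (that is, $(0,\L_N)$ in the notation of the statement) is either unbounded in $\re\times\mathbb{E}^{s}(\Omega,D)$ or else contains another characteristic point $(\mu_j,0)$ with $j\ge2$. I would exclude the second alternative. Every nontrivial $(\lambda,u)\in\Sigma$ satisfies $u>0$ in $\O$ by the discussion above; if $\Sigma$ contained $(\mu_j,0)$ there would be a sequence $(\lambda_n,u_n)\in\Sigma$ of positive solutions with $u_n\to0$ and $\lambda_n\to\mu_j$, and normalizing $v_n=u_n/\|u_n\|$ one obtains, using the compactness of $A$ and $\|G(u_n)\|=o(\|u_n\|)$, a limit $v\ge0$, $\|v\|=1$, with $v=\mu_j A v$, i.e. a nonnegative eigenfunction of \eqref{autoT} for $\mu_j>\L_N$; testing its equation against the positive principal eigenfunction $\bar u$ yields $\mu_j\int_{\O}\frac{v\bar u}{|x|^{2s}}\,dx=\langle v,\bar u\rangle=\L_N\int_{\O}\frac{v\bar u}{|x|^{2s}}\,dx$, forcing $\int_{\O}\frac{v\bar u}{|x|^{2s}}\,dx=0$ and hence $v\equiv0$ in $\O$, a contradiction (this is the orthogonality/sign-change obstruction already exploited in Theorem \ref{nonauto}). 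Therefore $\Sigma$ is unbounded, and by construction it is an unbounded branch of positive solutions of \eqref{bifur} bifurcating from $(0,\L_N)$.

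The step I expect to be the main obstacle is the compactness of $A$, i.e. the compact embedding $\mathbb{E}^{s}(\Omega,D)\hookrightarrow L^{2}(\O,|x|^{-2s}dx)$: the weight $|x|^{-2s}$ is precisely critical for the embedding, so one cannot conclude directly but must isolate a neighbourhood of the origin and use in an essential way that the mixed Hardy inequality holds with a finite constant on $\mathbb{E}^{s}(\Omega,D)$ (equivalently, $\L_N>0$) to bound the contribution near $0$, passing to the limit only on the complementary region where the embedding is classical. Once this is in place, the identification of the characteristic values of $A$ with the eigenvalues of \eqref{autoT} and the positivity argument ruling out bifurcation from the higher eigenvalues are routine consequences of Theorem \ref{nonauto}, and the assertion follows from the Rabinowitz alternative.
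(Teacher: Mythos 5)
Your overall strategy — recast \eqref{bifur} as a fixed-point problem $u=\lambda Au+G(u)$ in $\mathbb{E}^{s}(\Omega,D)$, invoke Rabinowitz's global alternative, use the simplicity from Theorem~\ref{nonauto} to get an odd-multiplicity bifurcation point, and then exclude the second alternative via positivity and orthogonality against the principal eigenfunction — is exactly the route the paper intends when it says the result is a direct consequence of Theorem~\ref{nonauto} and Rabinowitz. So at the level of structure you are aligned with the paper.

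The genuine gap is the compactness of $A$, which you correctly flag as the main obstacle but then incorrectly claim to resolve. The map $u\mapsto\mathcal{K}(u/|x|^{2s})$ is \emph{not} compact on $\mathbb{E}^{s}(\Omega,D)$, because the weight $|x|^{-2s}$ sits exactly at the critical Hardy scaling. The near/far splitting you invoke (cut off by $\varphi$ near the origin, apply \eqref{hardy} to $\varphi u\in H^{s}_{0}(\Omega)$, Rellich away from the origin) is precisely the argument in Section~\ref{sec:Hardy} that proves $\Lambda_N>0$, i.e.\ \emph{boundedness} of the embedding $\mathbb{E}^{s}(\Omega,D)\hookrightarrow L^{2}(\Omega,|x|^{-2s}dx)$; it does not give compactness, since the Hardy contribution on $B_{\delta}(0)$ is only controlled by a constant times $\|u\|^{2}$ and does not tend to zero as $\delta\to 0$ uniformly along a weakly null sequence. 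Indeed, if $A$ were compact the infimum in \eqref{hardymix} would be attained by the direct method for \emph{every} D-N configuration, contradicting Theorem~\ref{exist00}: the bottom of the essential spectrum of problem \eqref{autoT} is $\Lambda$ (a local quantity near the pole, insensitive to the boundary condition), which is why attainability is a delicate issue. For the same reason your claim that the characteristic values of $A$ form a discrete sequence $\Lambda_N=\mu_1<\mu_2\le\cdots\to\infty$ is false; there are at most finitely many isolated eigenvalues below $\Lambda$ and then continuous spectrum, and Theorem~\ref{nonauto}(2) only isolates $\Lambda_N$, it does not discretize the whole spectrum.

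Because of this, the classical Rabinowitz theorem as you state it (compact linear part $A$, compact higher-order part $G$) does not apply directly. One has to either (i) rewrite the problem as $u=(I-\lambda A)^{-1}G(u)$ — legitimate for $\lambda<\Lambda_N$ since $Q_{\lambda}$ is then an equivalent norm and $G$ is genuinely compact by subcriticality of $p$ — and use a version of global bifurcation adapted to a parameter-dependent compact map with a singularity of the resolvent at $\lambda=\Lambda_N$; or (ii) regularize the potential (replace $|x|^{-2s}$ by $(|x|^{2s}+\varepsilon)^{-1}$, where the operator is compact), apply Rabinowitz, and pass to the limit using the a priori bounds coming from $\Lambda_N<\Lambda$. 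Either fix uses Theorem~\ref{nonauto} (simplicity and isolation of $\Lambda_N$) in an essential way, which is why the paper cites it, but neither is the argument you wrote. As it stands, the proposal asserts a compactness that fails and builds the rest of the argument on it, so the proof is not complete.

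Two minor points that are fine but worth stating precisely if you rewrite: the positivity step (testing against $u_{-}$ and using $\lambda<\Lambda_N$) is correct and matches the mechanism in Theorem~\ref{nonauto}, and the exclusion of bifurcation from higher eigenvalues via $\int_{\Omega}v\bar u/|x|^{2s}\,dx=0$ is sound \emph{once} you know the limit $v$ is a genuine eigenfunction for some $\mu_j>\Lambda_N$ — but producing that limit again uses compactness of $A$, so it needs to be rerouted through whichever of the two fixes above you adopt.
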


More interesting is the following problem. Assume now that $\l\in (\L_{N},\L)$ and define
\begin{equation}\label{constant}
\begin{array}{lll}
I_{\l,p} &= & \inf_{\{\phi\in \mathbb{E}^{s}(\Omega, D), \phi\neq
0\}}\dfrac{\dfrac{a_{d,s}}{2}\displaystyle\int\int_{\mathcal{D}_{\Omega}}|\phi(x)-\phi(y)|^{2}\,d\nu- \l\io \frac{\phi^2}{|x|^{2s}}\,dx}{\bigg(\dyle\int_\O
|\phi|^{p+1}dx\bigg)^{\frac{2}{p+1}}},
\end{array}
\end{equation}
where $p\in (1,2^*_s-1)$. It is clear that $I_{\l,p}<0$, however we have the next result.
\begin{Theorem}\label{th:diff}
Assume that $\l\in (\L_{N},\L)$ and $1<p<2^*_s-1$, then $I_{\l,p}<0$, is finite and it is achieved. Hence the problem
\begin{equation}\label{pm01}
\left\{
\begin{array}{rcll}
(-\Delta)^s u + u^{p}& = & \lambda \dfrac{u}{|x|^{2s}} & {\text{ in }}\O,\\
   u & > & 0 &{\text{ in }} \Omega, \\
   \mathcal{B}_{s}u & = & 0 &{\text{ in }} \RR^{d}\backslash \O,
\end{array}\right.
\end{equation}
has a positive solution.
\end{Theorem}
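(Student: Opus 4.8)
\textbf{Proof plan for Theorem \ref{th:diff}.} The strategy is the direct method of the calculus of variations applied to the quotient defining $I_{\l,p}$, where the subcriticality $1<p<2^*_s-1$ is what makes the minimization compact. First I would show $I_{\l,p}>-\infty$: for $\phi\in\mathbb{E}^{s}(\Omega,D)$, the numerator is bounded below by $(1-\frac{\l}{\L_N})\|\phi\|^2\cdot(\text{something})$ — more precisely, since $\l>\L_N$ the numerator can be negative, but by the definition of $\L_N$ in \eqref{hardymix} we have $\io\frac{\phi^2}{|x|^{2s}}\,dx\le\frac{1}{\L_N}\|\phi\|^2$, so the numerator is $\ge(1-\frac{\l}{\L_N})\|\phi\|^2$ when $\l>\L_N$ (negative coefficient), which is not yet enough. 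Instead I would argue directly on a minimizing sequence: fix $\phi_0$ with $\io|\phi_0|^{p+1}=1$ and negative numerator (possible since $I_{\l,p}<0$, which follows by testing the quotient with the eigenfunction $\bar u$ of $\L_N$, suitably rescaled, exactly as sketched before the statement), then normalize a minimizing sequence $\{\phi_n\}$ by $\io|\phi_n|^{p+1}\,dx=1$. Boundedness of $\|\phi_n\|$ in $\mathbb{E}^{s}(\Omega,D)$ then follows because if $\|\phi_n\|\to\infty$ the numerator, being $\ge\|\phi_n\|^2(1-\frac{\l}{\L_N}\cdot\frac{\io\phi_n^2/|x|^{2s}}{\|\phi_n\|^2})$ and with the Hardy ratio bounded by $1/\L_N$, would be bounded below by a fixed negative multiple of $\|\phi_n\|^2\to-\infty$ only if that multiple were negative; one rules this out by the sharper observation that the infimum is finite because $\io\phi^2/|x|^{2s}$ and $(\io|\phi|^{p+1})^{2/(p+1)}$ are both controlled by $\|\phi\|^2$ via Hardy and Sobolev (Proposition \ref{sobolev}) — so in fact the numerator grows quadratically while the constraint is fixed, forcing $\|\phi_n\|$ bounded. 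This is the one place needing care and is the main obstacle: one must verify $I_{\l,p}$ is genuinely finite, i.e. the negative Hardy term cannot overwhelm the Gagliardo seminorm on the constraint manifold $\{\io|\phi|^{p+1}=1\}$.

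Once $\{\phi_n\}$ is bounded in $\mathbb{E}^{s}(\Omega,D)$, I extract a subsequence with $\phi_n\rightharpoonup\bar\phi$ weakly in $\mathbb{E}^{s}(\Omega,D)$, and by the compact embedding $\mathbb{E}^{s}(\Omega,D)\hookrightarrow\hookrightarrow L^\sigma(\O)$ for $\sigma<2^*_s$ (which holds since $\O$ is bounded, $D\subset\O^c$, and the norm is equivalent to the $H^s$ one by Proposition \ref{norm eqv}), I get $\phi_n\to\bar\phi$ strongly in $L^{p+1}(\O)$ and a.e. in $\O$. Here the subcriticality $p+1<2^*_s$ is essential: it gives $\io|\bar\phi|^{p+1}\,dx=\lim\io|\phi_n|^{p+1}\,dx=1$, so $\bar\phi\neq0$ and the constraint passes to the limit. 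The Hardy term $\io\phi_n^2/|x|^{2s}\,dx$ also converges to $\io\bar\phi^2/|x|^{2s}\,dx$: this follows from strong $L^\sigma$ convergence plus the a priori estimate $|x|^{-2s}|\phi_n|^2$ being uniformly integrable near the origin — one splits $\io=\int_{B_\rho(0)}+\int_{\O\setminus B_\rho(0)}$, handles the outer piece by $L^2$ convergence, and the inner piece by observing that the full sequence is bounded in $\mathbb{E}^{s}$, so by the Hardy inequality localized to small balls the contribution near $0$ is uniformly small. The Gagliardo seminorm is weakly lower semicontinuous, so $\|\bar\phi\|^2\le\liminf\|\phi_n\|^2$. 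Combining, the numerator evaluated at $\bar\phi$ is $\le\liminf$ of the numerator at $\phi_n$, while the denominator equals $1=\lim$ denominator at $\phi_n$; hence $\frac{\text{num}(\bar\phi)}{\text{den}(\bar\phi)}\le I_{\l,p}$, forcing equality, so $\bar\phi$ is a minimizer.

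Finally I would pass from the minimizer to a positive solution of \eqref{pm01}. Replacing $\bar\phi$ by $|\bar\phi|$ does not increase the Gagliardo seminorm (the inequality $||\bar\phi(x)|-|\bar\phi(y)||\le|\bar\phi(x)-\bar\phi(y)|$) and leaves the other two terms unchanged, so we may take $\bar\phi\ge0$. Writing the Euler--Lagrange equation for the constrained minimization and using Proposition \ref{BYP} (the integration by parts formula incorporating the mixed boundary condition $\mathcal{B}_s$), $\bar\phi$ is a weak solution of $(-\Delta)^s\bar\phi+\mu\bar\phi^p=\l\bar\phi/|x|^{2s}$ in $\O$ with $\mathcal{B}_s\bar\phi=0$, for a Lagrange multiplier $\mu>0$ (positivity of $\mu$ comes from testing with $\bar\phi$: $\mu\io\bar\phi^{p+1}=\l\io\bar\phi^2/|x|^{2s}-\|\bar\phi\|^2\cdot\frac{a_{d,s}}{2}^{-1}\cdot\ldots>0$ since the numerator of the quotient is negative, $I_{\l,p}<0$). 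Rescaling $u=\mu^{1/(p-1)}\bar\phi$ absorbs the multiplier and yields a nonnegative solution of \eqref{pm01}; then the strong maximum principle for $(-\Delta)^s$ (together with $\bar\phi\not\equiv0$) gives $u>0$ in $\O$, completing the proof.
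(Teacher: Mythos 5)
Your overall scaffolding (direct minimization, compact embedding for $p+1<2^*_s$, Lagrange multiplier of the right sign, $|\bar\phi|$ replacement) is sound, but the argument for the one step you yourself flag as ``the main obstacle,'' namely that $I_{\l,p}>-\infty$ and the minimizing sequence is bounded, is circular rather than a proof. The bound $\io\phi^2/|x|^{2s}\,dx\le \L_N^{-1}\|\phi\|^2$ makes the coefficient $1-\l/\L_N$ \emph{negative} because $\l>\L_N$, which gives a lower bound tending to $-\infty$ and tells you nothing; and the ``sharper observation'' that ``the numerator grows quadratically while the constraint is fixed'' is exactly the conclusion you are trying to establish, not an independent reason. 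The paper's actual device, which is absent from your proposal, is a localization near the singularity: split $\phi=\psi\phi+(1-\psi)\phi$ with $\psi$ a cut-off equal to $1$ near $0$. Then $\psi\phi\in H^s_0(\O)$, so on that piece the relevant Hardy constant is the \emph{full} Dirichlet constant $\L$, and $\l<\L$ makes that contribution nonnegative; on the complementary piece $|x|^{-2s}$ is bounded and, since $p+1>2$, H\"older together with the normalization $\io|\phi|^{p+1}\,dx=1$ bounds $\io\phi^2\,dx$ by a constant. After estimating the cross terms by Young, one arrives at a genuine coercivity estimate of the form $\frac{a_{d,s}}{2}\int\int_{\mathcal{D}_\O}(\phi(x)-\phi(y))^2\,d\nu-\l\io\frac{\phi^2}{|x|^{2s}}\,dx\ge c\,\|\phi\|^2-C$, which is what finiteness and boundedness of the minimizing sequence really rest on. Without this, Step~1 is unproved.

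The passage to the limit in the Hardy term also needs repair. Your claim that ``the Hardy inequality localized to small balls'' makes $\int_{B_\rho}\phi_n^2/|x|^{2s}\,dx$ uniformly small as $\rho\to 0$ is false: applying Hardy to $\psi_\rho\phi_n\in H^s_0(\O)$ only gives a bound by $\|\phi_n\|^2$, which does not shrink with $\rho$ (consider functions concentrating at the origin). Strong $L^1$-convergence of $\phi_n^2/|x|^{2s}$ does not follow from $L^\sigma$-convergence plus boundedness in $\mathbb{E}^s$ alone. The paper gets around this by working with the regularized weight $(|x|^{2s}+\frac1n)^{-1}$, extracting a solution $u_n$ of a regularized Euler--Lagrange problem, and then using a Brezis--Lieb type splitting $Q_\l(u_n)=Q_\l(u_0)+Q_\l(w_n)+o(1)$ together with the \emph{same} localization inequality (comparing $Q_\l(w_n)$ to $(\L-\l)\io w_n^2/|x|^{2s}\,dx$) to force $\io w_n^2/|x|^{2s}\,dx\to 0$. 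A direct minimizing-sequence version of this is possible, but it likewise hinges on the coercivity/localization estimate; in your draft neither that estimate nor an alternative mechanism is in place, so the convergence of the Hardy term is asserted rather than proved.
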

\begin{proof}
We divide the proof into two  steps.

{\sc Step 1: $|I_{\l,p}|<\infty$.}

Let $u\in \mathbb{E}^{s}(\Omega, D)$ be such that $\dyle\io |u|^{p+1}\,dx=1$ and consider $\psi\in C_0^{\infty}(B_\rho(0))$ to be a cut-off function such that $0\le
\psi\le 1$ and $\psi=1$ in $B_{\rho/4}(0)$ for $\rho>0$ small enough, then $u=\psi u +(1-\psi)u=u_{1}+u_{2}$.

Since $p+1>2$, then
\begin{equation}\label{control0}
\io \dfrac{u^2}{|x|^{2s}}dx\le\io \dfrac{u^2_1}{|x|^{2s}}dx+C(\O). \end{equation} On the other hand we have
\begin{eqnarray*}
(u(x)-u(y))^{2} &= & (u_{1}(x)-u_{1}(y))^{2}+(u_{2}(x)-u_2(y))^{2}\\ &+& 2(u_1(x)-u_1(y))(u_2(x)-u_2(y)).
\end{eqnarray*}
Then
\begin{equation}\label{control1}
\begin{array}{lll}
\displaystyle\int\int_{\mathcal{D}_{\Omega}}(u(x)-u(y))^{2}d\nu &= &
\displaystyle\int\int_{\mathcal{D}_{\Omega}}(u_{1}(x)-u_{1}(y))^{2}d\nu+\displaystyle\int\int_{\mathcal{D}_{\Omega}}(u_{2}(x)-u_2(y))^{2}d\nu\\ &+&
2\displaystyle\int\int_{\mathcal{D}_{\Omega}}(u_{1}(x)-u_{1}(y))(u_{2}(x)-u_2(y))d\nu.
\end{array}
\end{equation}
We estimate the last integral. By a direct computation it holds that
\begin{eqnarray*}
&\displaystyle\int\int_{\mathcal{D}_{\Omega}}(u_{1}(x)-u_{1}(y))(u_{2}(x)-u_2(y))d\nu\\ &=\displaystyle\io\io (u_{1}(x)-u_{1}(y))(u_{2}(x)-u_2(y)) d\nu
+2\int_{\O^c}\int_\O (u_{1}(x)-u_{1}(y))(u_{2}(x)-u_2(y))d\nu\\ &=K_{1}+K_{2}.
\end{eqnarray*}
By the elementary identity,
\begin{eqnarray*}
&\bigg(u_{1}(x)-u_{1}(y)\bigg)\bigg(u_{2}(x)-u_2(y)\bigg)=\\
&\psi(x)\bigg(1-\psi(x)\bigg)\bigg(u(x)-u(y)\bigg)^2+\bigg(1-2\psi(x)\bigg)u(y)\bigg(u(x)-u(y)\bigg)\bigg(\psi(x)-\psi(y)\bigg)\\ &
-u^2(y)\bigg(\psi(x)-\psi(y)\bigg)^2,
\end{eqnarray*}
and using Young inequality we obtain that
$$
K_{1}\ge -\e \displaystyle\io\io(u(x)-u(y))^{2}d\nu -C(\e) \io\io u^2(y)(\psi(x)-\psi(y))^2d\nu.
$$
Notice that
$$
\io\io u^2(y)(\psi(x)-\psi(y))^2d\nu\le C\io\io \frac{u^2(y)}{|x-y|^{d+2s-2}}dxdy\le C\io u^2(y)dy\le C(\O),
$$
where we have used the fact that $\sup_{\{x\in \O\}}\dyle\io \frac{1}{|x-y|^{d+2s-2}}dx\le C(\O)$. Thus
\begin{equation}\label{control2}
K_{1}\ge -C(\O,\e)-\e \displaystyle\io\io(u(x)-u(y))^{2}d\nu.
\end{equation}
Let analyze now $K_{2}$. It is clear that
\begin{eqnarray*}
K_{2} &= & \int_{\O^c}\int_{B_{2\rho}(0)}\psi(x)u(x)\bigg((1-\psi(x))u(x)-u(y)\bigg)d\nu\\ &=&
\int_{\O^c}\int_{B_{2\rho}(0)}\psi(x)(1-\psi(x))u^2(x)d\nu-\int_{\O^c}\int_{B_{2\rho}(0)}\psi(x)u(x)u(y)d\nu
\end{eqnarray*}
Since
\begin{equation}\label{control00}
\sup_{\{x\in B_{2\rho}(0)\}}\dyle\int_{\O^c} \frac{dy}{|x-y|^{d+2s}}\le C(\O,B_{2\rho}(0)),
\end{equation}
then
$$
\int_{\O^c}\int_{B_{2\rho}(0)}\psi(x)(1-\psi(x))u^2(x)d\nu\le C\int_{B_{2\rho}(0)}u^2(x)dx\le C(\O,B_{2\rho}(0)).
$$
Now, using Young inequality and  the estimate \eqref{control00}, we reach that
\begin{eqnarray*}
\int_{\O^c}\int_{B_{2\rho}(0)}\psi(x)u(x)u(y)d\nu &\le & \e \int_{\O^c}\int_{B_{2\rho}(0)}u^2(y)d\nu+ C(\e) \int_{\O^c}\int_{B_{2\rho}(0)}\psi^2(x)u^2(x)d\nu\\
&\le & \e \int_{\O^c}\int_{B_{2\rho}(0)}u^2(y)d\nu+C(\O,B_{2\rho}(0), \e).
\end{eqnarray*}
Since $u^2(y)\le 2(u(x)-u(y))^2+2u^2(x)$, it follows that
\begin{eqnarray*}
 \e\int_{\O^c}\int_{B_{2\rho}(0)}u^2(y)d\nu &\le &  2\e \int_{\O^c}\int_{B_{2\rho}(0)}(u(x)-u(y))^2\,d\nu + 2\e\int_{\O^c}\int_{B_{2\rho}(0)}u^2(x)d\nu\\
&\le & 2\e \int_{\O^c}\int_{B_{2\rho}(0)}(u(x)-u(y))^2\,d\nu + C(\O,B_{2\rho}(0), \e).
\end{eqnarray*}
Thus
\begin{equation}\label{control3}
K_{2}\ge -2\e \int_{\O^c}\int_{B_{2\rho}(0)}(u(x)-u(y))^2\,d\nu -C(\O,B_{2\rho}(0), \e).
\end{equation}
Therefore combining estimates \eqref{control0}, \eqref{control1}, \eqref{control2} and \eqref{control3}, we conclude that
\begin{eqnarray*}
& & \dfrac{a_{d,s}}{2}\displaystyle\int\int_{\mathcal{D}_{\Omega}}(u(x)-u(y))^{2}\,d\nu- \l\io \frac{u^2}{|x|^{2s}}\,dx\\ & & \ge
\bigg(\dfrac{a_{d,s}}{2}\displaystyle\int\int_{\mathcal{D}_{\Omega}}(u_1(x)-u_1(y))^{2}\,d\nu- \l\io \frac{u^2_1}{|x|^{2s}}\,dx\bigg) +
\dfrac{a_{d,s}}{2}\displaystyle\int\int_{\mathcal{D}_{\Omega}}(u_2(x)-u_2(y))^{2}\,d\nu\\ & & -3\e
\dfrac{a_{d,s}}{2}\displaystyle\int\int_{\mathcal{D}_{\Omega}}(u(x)-u(y))^{2}\,d\nu -C(\O,B_{2\rho}(0), \e).
\end{eqnarray*}
Since $u_1\in H^s_0(\O)$ and $\l<\L$, then
$$
\dfrac{a_{d,s}}{2}\displaystyle\int\int_{\mathcal{D}_{\Omega}}(u(x)-u(y))^{2}\,d\nu- \l\io \frac{u^2}{|x|^{2s}}\,dx\ge
(\L-\l)\dfrac{a_{d,s}}{2}\displaystyle\int\int_{\mathcal{D}_{\Omega}}(u(x)-u(y))^{2}\,d\nu.
$$
Thus
\begin{eqnarray*}
& & \dfrac{a_{d,s}}{2}\displaystyle\int\int_{\mathcal{D}_{\Omega}}(u(x)-u(y))^{2}\,d\nu- \l\io \frac{u^2}{|x|^{2s}}\,dx\\ & &\ge
C(\l,\L)\bigg(\displaystyle\int\int_{\mathcal{D}_{\Omega}}(u_2(x)-u_2(y))^{2}\,d\nu +\int\int_{\mathcal{D}_{\Omega}}(u_1(x)-u_1(y))^{2}\,d\nu\bigg)\\ & & -3\e
\dfrac{a_{d,s}}{2}\displaystyle\int\int_{\mathcal{D}_{\Omega}}(u(x)-u(y))^{2}\,d\nu -C(\O,B_{2\rho}(0), \e).
\end{eqnarray*}
Choosing $\e$ small, we reach that
\begin{equation}\label{coercive}
\begin{array}{lll}
& & \dfrac{a_{d,s}}{2}\displaystyle\int\int_{\mathcal{D}_{\Omega}}(u(x)-u(y))^{2}\,d\nu- \l\io \frac{u^2}{|x|^{2s}}\,dx\\ &  &\ge  C(\l,\L,
\e)\displaystyle\int\int_{\mathcal{D}_{\Omega}}(u(x)-u(y))^{2}\,d\nu- C(\O,B_{2\rho}(0), \e).
\end{array}
\end{equation}
Thus $|I_{\l,p}|<\infty$.

{\sc Step 2: $I_{\l,p}$ is attained.} Define
$$ I_{\l,n}=\inf_{\{\phi\in \mathbb{E}^{s}(\Omega, D), \phi\neq
0\}}\dfrac{\dfrac{a_{d,s}}{2}\displaystyle\int\int_{\mathcal{D}_{\Omega}}|\phi(x)-\phi(y)|^{2}\,d\nu- \l\io \frac{\phi^2}{|x|^{2s}+\frac
1n}\,dx}{\bigg(\dyle\int_\O |\phi|^{p+1} dx\bigg)^{\frac{2}{p+1}}},
$$ it is clear that $I_{\l,n}\downarrow I_{\l,p}$ as $n\to \infty$.
Hence $I_{\l,n}<0$ for $n\ge n_0$.

Since $p+1<2^*_s$, then using a variational argument we get that $I_{\l,n}$ is achieved. Hence we get the existence of $u_n\in \mathbb{E}^{s}(\Omega, D)$ that
satisfies
$$
(P_n)\equiv \left\{
\begin{array}{rcll}
(-\Delta)^s u_n -\lambda \dfrac{u_n}{|x|^{2s}+\frac 1n} &=& I_{\l,n}u^{p}_n & {\text{ in }}\O,\\
   u_n & \ge & 0 &{\text{ in }} \Omega, \\
   \mathcal{B}_{s}u_n & = & 0 &{\text{ in }} \re^{d}\backslash \O,
\end{array}\right.
$$ with $\|u_n\|_{L^{p+1}(\O)}=1$

We claim that $\{u_n\}_n$ is bounded in the space $ \mathbb{E}^{s}(\Omega, D)$. Since
\begin{equation*}
\begin{array}{lll}
& \dfrac{a_{d,s}}{2}\displaystyle\int\int_{\mathcal{D}_{\Omega}}(u_n(x)-u_n(y))^{2}\,d\nu- \l\io \frac{u^2_n}{|x|^{2s}+\frac 1n}\,dx\ge \\ &
\dfrac{a_{d,s}}{2}\displaystyle\int\int_{\mathcal{D}_{\Omega}}(u_n(x)-u_n(y))^{2}\,d\nu- \l\io \frac{u^2_n}{|x|^{2s}}\,dx,
\end{array}
\end{equation*}
then by \eqref{coercive}, it follows that
\begin{equation*}
\begin{array}{lll}
& & \dfrac{a_{d,s}}{2}\displaystyle\int\int_{\mathcal{D}_{\Omega}}(u_n(x)-u_n(y))^{2}\,d\nu- \l\io \frac{u^2_n}{|x|^{2s}+\frac 1n}\,dx\\ &  &\ge  C(\l,\L,
\e)\displaystyle\int\int_{\mathcal{D}_{\Omega}}(u_n(x)-u_n(y))^{2}\,d\nu- C(\O,B_{2\rho}(0), \e).
\end{array}
\end{equation*}
Thus $\displaystyle\int\int_{\mathcal{D}_{\Omega}}(u_n(x)-u_n(y))^{2}\,d\nu\le C$ for all $n$ and the claim follows.

Therefore, there exists  $u_0\in \mathbb{E}^{s}(\Omega, D)$ such that $u_n\rightharpoonup u_0$ weakly in $\mathbb{E}^{s}(\Omega, D)$ and strongly in
$L^{p+1}(\O)$. Hence $\|u_0\|_{L^{p+1}(\O)}=1$ and then $u_0\not\equiv 0$. Notice that by the weak convergence we obtain that $u_0$ is a weak solution to
\eqref{pm01}.

We claim now that $\dfrac{u^2_n}{|x|^{2s}}\to \dfrac{u^2_0}{|x|^{2s}}$ strongly in $L^1(\O)$. Define $w_n=u_n-u_0$, it is clear that $w_n\rightharpoonup 0$ weakly
in $\mathbb{E}^{s}(\Omega, D)$ and $w_n\to 0$ strongly in $L^{p+1}(\O)$.

As in the previous step, we have
$$\displaystyle\int\int_{\mathcal{D}_{\Omega}}(w_n(x)-w_n(y))^{2}\,d\nu\ge
\displaystyle\int\int_{\mathcal{D}_{\Omega}}\bigg((\psi w_n)(x)-(\psi w_n)(y)\bigg)^{2}\,d\nu +o(1)
$$ and $$ \io \dfrac{w_n^2}{|x|^{2}+\frac{1}{n}}dx=\io
\dfrac{(\psi w_n)^2}{|x|^{2}+\frac{1}{n}}dx+o(1). $$
Since $w_n\in H^s_0(\O)$ and  $u_0$ and $u_n$ are solution of the problems \eqref{pm01} and $(P_n)$ respectively, it holds that
\begin{eqnarray*}
o(1) &\ge & \dfrac{a_{d,s}}{2}\displaystyle\int\int_{\mathcal{D}_{\Omega}}(w_n(x)-w_n(y))^{2}\,d\nu- \l\io \frac{w_n^2}{|x|^{2s}}\,dx\\ & \ge  &
\dfrac{a_{d,s}}{2}\displaystyle\int\int_{\mathcal{D}_{\Omega}}((\psi w_n)(x)-(\psi w_n)(y))^{2}\,d\nu-\l\io \dfrac{(\psi w_n)^2}{|x|^{2s}+\frac{1}{n}}dx +o(1)\\
&\ge & (\L-\l)\dyle \io \frac{w_n^2}{|x|^{2s}}\,dx +o(1).
\end{eqnarray*}
Hence $\dyle\io \frac{w_n^2}{|x|^{2s}}\,dx=o(1)$ and the claim follows.

Combing the above estimates we reach that $u_n\to u_0$ strongly in $\mathbb{E}^{s}(\Omega, D)$ and thus $u_0$ realize $I_{\l,p}$. Hence up to a positive constant,
$cu_0$ solves problem \eqref{pm01}, then we conclude.

\end{proof}

\subsection{Doubly-Critical problem }\label{sec:critical}
In this subsection we discuss the existence and the non existence to the following double critical problem
\begin{equation}\label{P2}
\left\{
\begin{array}{rcll}
(-\Delta)^s u & = & \lambda \dfrac{u}{|x|^{2s}}+ u^{2^*_s-1} & {\text{ in }}\O,\\
   u & > & 0 &{\text{ in }} \Omega, \\
   \mathcal{B}_{s}u & = & 0 &{\text{ in }} \RR^{d}\backslash \O,
\end{array}\right.
\end{equation}
where $\l\in (0, \L_N)$ according to the D-N configuration. If $\O=\re^d$, problem \eqref{P2} is related to the next constant
\begin{equation}\label{minm1}
S_\l=\inf\limits_{\{u\in \mathcal{C}^\infty_0(\mathbb{R}^d)  , ||u||\neq 0,\,||u||_{2^*_s}=1  \}} \dfrac{a_{d,s}}{2}\dyle\int_{\mathbb{R}^d}\int_{\mathbb{R}^d} |u(x)-u(y)|^2\,d\nu
-\dyle\l\int_{\mathbb{R}^d}\dfrac{u^2(x)}{|x|^{2s}}dx.
\end{equation}
The problem  in the whole euclidian space  $\mathbb{R}^d$  has been studied in \cite{DMPS}. From the result of \cite{DMPS} we know that the constant $S_\l$ is
independent of $\O$ containing the pole of the Hardy potential.

In the same way we consider for a D-N configuration the constant $T_{\l,N}$ defined by
\begin{equation}\label{minm2}
T_{\l,N}=\inf\limits_{\{u\in E^s(\O,D), ||u||\neq 0,\,||u||_{2^*_s}=1  \}} \dfrac{a_{d,s}}{2}\dyle\int\int_{\mathcal{D}_{\Omega}} |u(x)-u(y)|^2\,d\nu
 -\l\int_{\Omega}\dfrac{u^2(x)}{|x|^{2s}}dx.
\end{equation}
It is clear that if $T_{\l,N}$ is achieved, then problem \eqref{P2} has a nontrivial solution.

We have the next existence result.
\begin{Theorem}\label{exi-critical} Let $(D,N)$ a D-N configuration and assume that $\l\in (0, \L_N)$. Suppose that $T_{\l,N}<\min\{S_\l, S_N\}$, then $T_{\l,N}$ is achieved and, as a consequence, problem \eqref{P2} has a nontrivial solution.
\end{Theorem}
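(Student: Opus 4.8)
The plan is to run a concentration--compactness analysis \emph{by hand}, using the localization--at--the--origin device already exploited in Proposition \ref{exist01}, in order to prove that a minimizing sequence for $T_{\l,N}$ cannot vanish; once this is known, a Brezis--Lieb splitting together with a subadditivity argument promotes the weak limit to a minimizer, and hence, after rescaling, to a solution of \eqref{P2}.

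\textbf{Step 1: set--up and reduction.} I would take a minimizing sequence $\{u_n\}\subset\mathbb{E}^{s}(\Omega, D)$ for $T_{\l,N}$ with $\|u_n\|_{L^{2^*_s}(\O)}=1$, which we may assume nonnegative (replace $u_n$ by $|u_n|$). Writing
\[
Q_\l(u):=\dfrac{a_{d,s}}{2}\int\int_{\mathcal{D}_{\Omega}}|u(x)-u(y)|^{2}\,d\nu-\l\io\frac{u^2}{|x|^{2s}}\,dx ,
\]
the definition of $\L_N$ gives $(1-\l/\L_N)\|u\|^2\le Q_\l(u)\le\|u\|^2$ because $\l<\L_N$; in particular $\{u_n\}$ is bounded in $\mathbb{E}^{s}(\Omega, D)$ and $T_{\l,N}>0$. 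Passing to a subsequence, $u_n\rightharpoonup\bar u\ge0$ weakly, $u_n\to\bar u$ in $L^\sigma(\O)$ for every $\sigma<2^*_s$ (in particular in $L^2(\O)$, by compactness) and a.e. Put $v_n=u_n-\bar u$. The Hilbert structure of $\mathbb{E}^{s}(\Omega, D)$ gives $\|u_n\|^2=\|\bar u\|^2+\|v_n\|^2+o(1)$, and the Brezis--Lieb lemma (used with the measures $dx$ and $|x|^{-2s}dx$, the latter being admissible thanks to \eqref{hardy}) gives $\io|x|^{-2s}u_n^2\,dx=\io|x|^{-2s}\bar u^2\,dx+\io|x|^{-2s}v_n^2\,dx+o(1)$ and $\int_\O|u_n|^{2^*_s}\,dx=\int_\O\bar u^{2^*_s}\,dx+\int_\O|v_n|^{2^*_s}\,dx+o(1)$. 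Hence $Q_\l(u_n)=Q_\l(\bar u)+Q_\l(v_n)+o(1)$, and the whole proof reduces to showing $\bar u\neq0$.

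\textbf{Step 2: the compactness estimate for $v_n$.} Fix $\rho>0$ with $B_{2\rho}(0)\subset\subset\O$ and a cut--off $\psi\in\mathcal{C}^\infty_0(B_{2\rho}(0))$, $0\le\psi\le1$, $\psi\equiv1$ on $B_\rho(0)$, and split $v_n=\psi v_n+(1-\psi)v_n$. Since $v_n\to0$ in $L^2(\O)$, the Hardy weight is inert away from the origin, so $\io|x|^{-2s}v_n^2\,dx=\int_{\re^d}|x|^{-2s}(\psi v_n)^2\,dx+o(1)$; and, estimating the Gagliardo cross terms exactly as in \eqref{main00}--\eqref{main000} (they are $o(1)$ since $v_n\to0$ in $L^2(\O)$ and $\psi$ varies only in a bounded region away from the origin), the $\mathcal{D}_{\Omega}$--seminorm of $v_n$ dominates, up to $o(1)$, the sum of the full--space seminorm of $\psi v_n$ (note $\psi v_n$ is supported in $\O$) and the $\mathcal{D}_{\Omega}$--seminorm of $(1-\psi)v_n$. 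Since $\psi v_n\in H^s_0(B_{2\rho}(0))$, the definition of $S_\l$ in \eqref{minm1} (extended by density of $\mathcal{C}^\infty_0$) bounds $\tfrac{a_{d,s}}{2}\!\int\!\int_{\re^d\times\re^d}|(\psi v_n)(x)-(\psi v_n)(y)|^2\,d\nu-\l\int_{\re^d}|x|^{-2s}(\psi v_n)^2\,dx$ below by $S_\l\|\psi v_n\|_{L^{2^*_s}(\O)}^2$, while $(1-\psi)v_n\in\mathbb{E}^{s}(\Omega, D)$ and Proposition \ref{sobolev} bounds its energy below by $S_N\|(1-\psi)v_n\|_{L^{2^*_s}(\O)}^2$. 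Hence
\[
Q_\l(v_n)\ \ge\ S_\l\,\|\psi v_n\|_{L^{2^*_s}(\O)}^{2}+S_N\,\|(1-\psi)v_n\|_{L^{2^*_s}(\O)}^{2}+o(1).
\]
Finally, the weak--$*$ limit of $|v_n|^{2^*_s}\,dx$ is a finite measure on $\overline\O$ with at most countably many atoms; choosing $\rho$ so that it charges the annulus $B_{2\rho}(0)\setminus B_\rho(0)$ by at most $\varepsilon$ makes the $L^{2^*_s}$--splitting additive up to $O(\varepsilon)$, so that, using $a^{2/2^*_s}+b^{2/2^*_s}\ge(a+b)^{2/2^*_s}$ and letting $\varepsilon\downarrow0$, one obtains $Q_\l(v_n)\ge\min\{S_\l,S_N\}\,\|v_n\|_{L^{2^*_s}(\O)}^{2}+o(1)$.

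\textbf{Step 3: conclusion.} If $\bar u=0$ then $v_n=u_n$, $\|v_n\|_{L^{2^*_s}(\O)}=1$, and Step 2 forces $T_{\l,N}=\lim Q_\l(u_n)\ge\min\{S_\l,S_N\}$, contradicting the hypothesis; hence $\bar u\neq0$. Set $t=\|\bar u\|_{L^{2^*_s}(\O)}^{2^*_s}\in(0,1]$, so $\|v_n\|_{L^{2^*_s}(\O)}^{2^*_s}\to1-t$. Combining $Q_\l(u_n)=Q_\l(\bar u)+Q_\l(v_n)+o(1)$ with $Q_\l(\bar u)\ge T_{\l,N}\,t^{2/2^*_s}$ (definition of $T_{\l,N}$, $\bar u\neq0$) and Step 2 yields $T_{\l,N}\ge T_{\l,N}\,t^{2/2^*_s}+\min\{S_\l,S_N\}\,(1-t)^{2/2^*_s}$; since $0<T_{\l,N}<\min\{S_\l,S_N\}$ and $t^{2/2^*_s}+(1-t)^{2/2^*_s}\ge1$, this is impossible unless $t=1$. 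Then $\|\bar u\|_{L^{2^*_s}(\O)}=1$, and $Q_\l(v_n)\ge0$ with $Q_\l(\bar u)\ge T_{\l,N}$ force $Q_\l(\bar u)=T_{\l,N}$: thus $\bar u$ attains $T_{\l,N}$. Writing the Euler--Lagrange equation — the nonlocal boundary term being read through Proposition \ref{BYP} — and rescaling $\bar u$ by the positive constant $T_{\l,N}^{1/(2^*_s-2)}$, one obtains a nonnegative nontrivial weak solution of \eqref{P2}; its strict positivity in $\O$ follows from the strong maximum principle for $(-\Delta)^s$.

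\textbf{Where the difficulty lies.} The crux is Step 2: a vanishing sequence $v_n$ may concentrate its $L^{2^*_s}$--mass either at the origin — where the only available control is the Hardy--Sobolev constant $S_\l$ — or anywhere else in $\overline\O$, including on the mixed exterior set $N$, where the relevant control is the mixed Sobolev constant $S_N$ of Proposition \ref{sobolev}. The cut--off $\psi$ separates precisely these two regimes, which is why the natural energy threshold ruling out both is $\min\{S_\l,S_N\}$; the delicate bookkeeping is that of the Gagliardo cross terms and of the $L^{2^*_s}$--mass left on the transition annulus, handled via the countability of the atoms of the defect measure.
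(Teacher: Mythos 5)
Your argument is correct, but it runs along a genuinely different track than the paper's. The paper first invokes the Ekeland variational principle to produce a sequence of approximate Euler--Lagrange equations \eqref{Pcrit}, and then \emph{tests} those equations with $u_n\psi^2$ and $u_n(1-\psi)^2$; the localization identity \eqref{tlm0} converts the test into a Hardy--Sobolev quotient for $\psi u_n\in H^s_0(\O)$, respectively a mixed Sobolev quotient for $(1-\psi)u_n\in\mathbb{E}^s(\Omega,D)$, and the dichotomy ``$\int(\psi u_n)^{2^*_s}\ge c$ along a subsequence or $\int(\psi u_n)^{2^*_s}\to 0$'' yields $S_\lambda\le T_{\lambda,N}$ or $S_N\le T_{\lambda,N}$ without ever having to make the $L^{2^*_s}$--mass on the transition region small (the pointwise bound $\psi^{2^*_s}+(1-\psi)^{2^*_s}\ge 2^{1-2^*_s}$ keeps $\int((1-\psi)u_n)^{2^*_s}$ bounded away from $0$ in the second alternative, which is all one needs to divide). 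You instead stay at the level of the quadratic form: you split $Q_\lambda(u_n)=Q_\lambda(\bar u)+Q_\lambda(v_n)+o(1)$ by Brezis--Lieb, bound $Q_\lambda(v_n)$ from below by $S_\lambda\|\psi v_n\|^2_{L^{2^*_s}}+S_N\|(1-\psi)v_n\|^2_{L^{2^*_s}}+o(1)$, and then use subadditivity of $t\mapsto t^{2/2^*_s}$ to reassemble $\|v_n\|^2_{L^{2^*_s}}$. What this buys you is a one--pass argument that handles vanishing and dichotomy simultaneously and delivers the minimizer directly; what it costs you is exactly the step that the paper's testing trick bypasses, namely controlling the $L^{2^*_s}$--mass of $v_n$ on the annulus where $\psi$ transitions. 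Your device -- pass to a subsequence with a weak--$*$ limit $\mu$ of $|v_n|^{2^*_s}dx$ and choose $\rho$ so that $\mu(\overline{B_{2\rho}\setminus B_\rho})<\varepsilon$ -- is fine in principle (the dyadic annuli are disjoint and $\mu$ is finite), but note that it makes $\rho$ (hence the $o(1)$ error coming from the Gagliardo cross terms) depend on $\varepsilon$, so the order of the limits $n\to\infty$ then $\varepsilon\downarrow 0$ must be taken explicitly; also, since you eventually take $\rho\to 0$, you should record that the cross--term estimates from the proof of Proposition \ref{exist01} hold uniformly in $n$ for each fixed $\rho$. These are genuine bookkeeping steps that your write-up compresses into a sentence, but they do go through. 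Finally, your Brezis--Lieb splitting of the Hardy term $\int |x|^{-2s}u_n^2$ is an extra ingredient not present in the paper; for $p=2$ it reduces to weak convergence in $L^2(|x|^{-2s}dx)$, which follows from the Hardy bound, so it is correct, though worth saying explicitly.
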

\begin{proof}
Recall that $S_N$ is the Sobolev constant defined in Proposition \ref{sobolev}. Since $\l<\L_N$, then $T_{\l,N}\ge (1-\frac{\l}{\L_N})S_N>0$.

Let $\{u_n\}_n\subset \mathbb{E}^{s}(\O, D)$  be a minimizing sequence for $T_{\l,N}$ with $\dyle \int_{\O}|u_n|^{2^*_s}\, dx=1$, then $\{u_n\}_n$ is bounded in
$\mathbb{E}^{s}(\O,D)$, and
$$
\dfrac{a_{d,s}}{2}\dyle\int\int_{\mathcal{D}_{\Omega}} |u_n(x)-u_n(y)|^2\,d\nu
 -\l\int_{\Omega}\dfrac{u^2_n(x)}{|x|^{2s}}dx\to T_{\l,N}.
 $$
 Without loss of generality we can choose $u_n\ge 0$ in $\re^d$.
Hence there exists $\bar{u}\in \mathbb{E}^{s}(\O,D)$ such that $u_n \rightharpoonup \bar{u} $ weakly in $\mathbb{E}^{s}(\O,D)$, and up to a subsequence,
$u_n\to \bar{u}$ strongly in $L^\s(\O)$ for all $\s<2^*_s$ and $u_n\to \bar{u}$ a.e in $\O$.

Using the Ekeland variational principle it holds that
\begin{equation}\label{Pcrit}
\left\{
\begin{array}{rcll}
(-\Delta)^s u_n -\l\dfrac{u_n}{|x|^{2s}} &=& T_{\l,N}\,u_n^{2^*_s-1}+o(1) & {\text{ in }}\O,\\
   \mathcal{B}_{s}u_n & = & 0 &{\text{ in }} \re^{d}\backslash \O.
\end{array}\right.
\end{equation}
It is clear that if $\bar{u}\neq 0$, then $\bar{u}$ solves the problem \eqref{P2}.

Assume by contradiction that $\bar{u}=0$. Let $\psi\in C_0^{\infty}(B_\rho(0))$ be a cut-off function such that $0\le \psi\le 1$ and $\psi=1$ in $B_{\rho/4}(0)$
for $\rho>0$ small enough.

We claim that
\begin{equation}\label{tlm0}
\io (-\Delta)^s u_n \, u_n\psi^2 dx =\dfrac{a_{d,s}}{2}\displaystyle\int\int_{\mathcal{D}_{\Omega}}\bigg((\psi u_n)(x)-(\psi u_n)(y)\bigg)^{2}\,d\nu +o(1).
\end{equation}
Notice that
$$
\io (-\Delta)^s u_n \, u_n\psi^2 dx =\dfrac{a_{d,s}}{2}\displaystyle\int\int_{\mathcal{D}_{\Omega}}(u_n(x)-u_n(y))\bigg((\psi^2 u_n)(x)-(\psi^2
u_n)(y)\bigg)\,d\nu +o(1).
$$
Since
\begin{eqnarray*}
&(u_n(x)-u_n(y))\bigg(\psi u_n)(x)-(\psi u_n)(y)\bigg)-((\psi u_n)(x)-(\psi u_n)(y))^{2}=\\ &-u_n(x)u_n(y)(\psi(x)-\psi(y))^2,
\end{eqnarray*}
we reach that
\begin{equation}\label{tlm}
\begin{array}{lll}
&\dyle \io (-\Delta)^s u_n \, u_n\psi^2 dx =\displaystyle\int\int_{\mathcal{D}_{\Omega}}\bigg((\psi u_n)(x)-(\psi u_n)(y)\bigg)^{2}\,d\nu \\
&-\displaystyle\int\int_{\mathcal{D}_{\Omega}}u_n(x)u_n(y)(\psi(x)-\psi(y))^2d\nu.
\end{array}
\end{equation}
Notice that
\begin{eqnarray*}
&\displaystyle\int\int_{\mathcal{D}_{\Omega}}u_n(x)u_n(y)(\psi(x)-\psi(y))^2d\nu =\\ &\displaystyle\int_{\Omega}\int_{\Omega}u_n(x)u_n(y)(\psi(x)-\psi(y))^2d\nu+
2\displaystyle\int_{\O^c}\int_{\Omega}u_n(x)u_n(y)(\psi(x)-\psi(y))^2d\nu=\\ & K_{1n}+2K_{2n}.
\end{eqnarray*}
Let us begin by estimating $K_{1n}$. It is clear that
\begin{eqnarray*}
K_{1n} &= & C(\Omega)\int_{\O}\int_{\O}\frac{u_n(x)u_n(y)}{|x-y|^{d+2s-2}}dxdy\\ &\le & 2C(\O)\dyle \int_{\O}\int_{\O}\frac{u^2_n(x)}{|x-y|^{d+2s-2}}dxdy+
2C(\O)\dyle \int_{\O}\int_{\O}\frac{u^2_n(y)}{|x-y|^{d+2s-2}}dxdy.
\end{eqnarray*}
Since $\sup_{x\in \O}\dint_{\O}\frac{dy}{|x-y|^{d+2s-2}}\le C(\O)$ and $\sup_{y\in \O}\int_{\O}\frac{dx}{|x-y|^{d+2s-2}}\le C(\O)$, we reach that
$$
K_{1n}\le 4C(\O)\int_{\O}u^2_n(x)dx=o(1).
$$
Now, we deal with $K_{2n}$. We have
\begin{eqnarray*}
K_{2n} &= & \int_{\O^c}\int_{B_{\rho}(0)}u_n(x)u_n(y)\psi^2(x)d\nu\\ &\le & \bigg(\int_{\O^c}\int_{B_{\rho}(0)}u^2_{n}(x)d\nu\bigg)^{\frac 12}
\bigg(\int_{\O^c}\int_{B_{\rho}(0)}u^2_{n}(y)d\nu\bigg)^{\frac 12}.
\end{eqnarray*}
It is clear that $\displaystyle\int_{\O^c}\int_{B_{\rho}(0)}u^2_{n}(x)d\nu=o(1)$, now using the fact that $\{u_n\}_n$ is bounded in $\mathbb{E}^{s}(\Omega, D)$
and $u^2(y)\le 2(u(x)-u(y))^2+2u^2(x)$, we get
\begin{eqnarray*}
\int_{\O^c}\int_{B_{\rho}(0)}u^2_n(y)d\nu &\le &  2\int_{\O^c}\int_{B_{\rho}(0)}(u_n(x)-u_n(y))^2\,d\nu + 2\int_{\O^c}\int_{B_{\rho}(0)}u^2_n(x)d\nu\\ &\le & C.
\end{eqnarray*}
Hence $K_{2n}=o(1)$.

Combining the above estimate and going back to \eqref{tlm}, we reach \eqref{tlm0} and the claim follows.

Therefore using $u_n\psi^2$ as a test function in \eqref{Pcrit}, we conclude that
\begin{eqnarray*}
\dfrac{a_{d,s}}{2}\displaystyle\int\int_{\mathcal{D}_{\Omega}}\bigg((\psi u_n)(x)-(\psi u_n)(y)\bigg)^{2}\,d\nu-\l\io \dfrac{(\psi u_n)^2}{|x|^{2s}}dx &= &
T_{\l,N}\io u_n^{2^*_s}\psi^2 dx +o(1)\\ & \le &  T_{\l,N}\bigg(\io (\psi\,u_n)^{2^*_s}dx\bigg)^{\frac{2}{2^*_s}} +o(1).
\end{eqnarray*}
We set $u_{1n}=u_n\psi$, then $u_{1n}\in H^s_0(\O)$. If for a subsequence of $\{u_n\}_n$, we have $\dyle\io u^{2^*_s}_{1n} dx\ge C$, then we conclude that
$$
S_\l\le \dfrac{\dfrac{a_{d,s}}{2}\displaystyle\int\int_{\mathcal{D}_{\Omega}}(u_{1n}(x)-u_{1n}(y))^2\,d\nu-\l\io
\dfrac{u_{1n}^2}{|x|^{2s}}dx}{\bigg(\displaystyle\io u_{1n}^{2^*_s}dx\bigg)^{\frac{2}{2^*_s}}}\le T_{\l,N}+o(1).
$$
Thus $S_\l\le T_{\l,N}$ which is a contradiction with the  hypothesis in the statement of the Theorem.

Hence $\displaystyle\io u^{2^*_s}_{1n} dx\to 0$ as $n\to \infty$. Thus $\displaystyle\io u^{2^*_s}_{n}(1-\psi)^{2^*_s} dx\to 1$ as $n\to \infty$.

We set $\varrho=1-\psi$, by the computation above, we  reach that
\begin{equation}\label{gran0}
\io (-\Delta)^s u_n \, u_n\varrho^2 dx =\dfrac{a_{d,s}}{2}\displaystyle\int\int_{\mathcal{D}_{\Omega}}\bigg((\varrho^2 u_n)(x)-(\varrho^2 u_n)(y)\bigg)^{2}\,d\nu
+o(1),
\end{equation}
and
$$
\io \dfrac{(\varrho u_n)^2}{|x|^{2s}}dx =o(1).
$$
Thus using $u_n\varrho^2$ as a test function in \eqref{Pcrit} we conclude that
\begin{eqnarray*}
\dfrac{a_{d,s}}{2}\displaystyle\int\int_{\mathcal{D}_{\Omega}}\bigg((\varrho^2 u_n)(x)-(\varrho^2 u_n)(y)\bigg)^{2}\,d\nu &= & T_{\l,N}\io u_n^{2^*_s}\varrho^2 dx
+o(1)\\ & \le &  T_{\l,N}\bigg(\io (\varrho\,u_n)^{2^*_s}dx\bigg)^{\frac{2}{2^*_s}} +o(1).
\end{eqnarray*}
Setting $u_{2n}=u_n\varrho$, it follows that
$$
S_N\le \dfrac{\dfrac{a_{d,s}}{2}\displaystyle\int\int_{\mathcal{D}_{\Omega}}(u_{2n}(x)-u_{2n}(y))^2\,d\nu}{\bigg(\displaystyle\io
u_{2n}^{2^*_s}dx\bigg)^{\frac{2}{2^*_s}}}\le T_{\l,N}+o(1).
$$
Thus $S_N\le T_{\l,N}$ which again  is a contradiction with the  hypothesis.

Hence, as a conclusion we obtain that $u_0\neq 0$. It is clear that, up to a constant, $u_0$ solves problem \eqref{P2}.

To finish we have just to show that $u_0$ realize $T_{\l,N}$. Let
$$Q_{\l,N}(u)\equiv \dfrac{a_{d,s}}{2}\dyle\int\int_{\mathcal{D}_{\Omega}} |u(x)-u(y)|^2\,d\nu
 -\l\int_{\Omega}\dfrac{u^2(x)}{|x|^{2s}}dx,$$
since $\l<\L_N$, then $Q_{\l,N}$ define an equivalent norm to the norm of the space $\mathbb{E}^{s}(\Omega, D)$ hence, to conclude, we have just to show that $
Q_{\l,N}(u_n-u_0)\to 0$ as $n\to \infty$. Recall that $u_n\rightharpoonup u_0$ weakly in $\mathbb{E}^{s}(\Omega, D)$, then
\begin{equation}\label{TR}
Q_{\l,N}(u_n)=Q_{\l,N}(u_0)+Q_{\l,N}(u_n-u_0)+o(1).
\end{equation}
In the same way, using Brezis-Lieb Lemma, we get
$$
||u_n||_{L^{2^*_s}(\O)}=||u_0||_{L^{2^*_s}(\O)}+||u_n-u_0||_{L^{2^*_s}(\O)}+o(1).
$$
Since $Q_{\l, N}(u_n-u_0)\ge S_{\l,N}||u_n-u_0||^{\frac{2}{2^*_s}}_{L^{2^*_s}(\O)}$, then
\begin{equation}\label{last00}
\begin{array}{lll}
\dfrac{Q_{\l,N}(u_0)}{||u_0||^{\frac{2}{2^*_s}}_{L^{2^*_s}(\O)}} &=
&\dfrac{Q_{\l,N}(u_n)-Q_{\l,N}(u_n-u_0)+o(1)}{\bigg(||u_n||_{L^{2^*_s}(\O)}-||u_n-u_0||^{\frac{2}{2^*_s}}_{L^{2^*_s}(\O)}\bigg)^{\frac{2}{2^*_s}}}\\ &\le &
S_{\l,N} \dfrac{Q_{\l,N}(u_n)-Q_{\l,N}(u_n-u_0)+o(1)}{\bigg(Q^{\frac{2^*_s}{2}}_{\l,N}(u_n)-Q^{\frac{2^*_s}{2}}_{\l,N}(u_n-u_0)+o(1)\bigg)^{\frac{2}{2^*_s}}}.
\end{array}
\end{equation}
If $\limsup_{n\to \infty}Q_{\l,N}(u_n-u_0)\neq 0$ then, by \eqref{TR}, it holds
$$\limsup_{n\to \infty}Q_{\l,N}(u_n-u_0)=S_{\l,N}-Q_{\l,N}(u_0).$$ Then going back to \eqref{last00}, it follows that
$$
S_{\l,N}\le \dfrac{Q_{\l,N}(u_0)}{||u_0||^{\frac{2}{2^*_s}}_{L^{2^*_s}(\O)}}<S_{\l,N}
$$
which leads to a contradiction. Hence $\limsup_{n\to \infty}Q_{\l,N}(u_n-u_0)=0$ and then $u_0$ realize $T_{\l,N}$.
\end{proof}

To complete the paper we give an  example where the constant $T_{\l,N}$ is realized and then problem \eqref{P2} has a positive solution.
\begin{Proposition}\label{fin} Consider  a D-N configuration such that $\L_N<\L$, then there exists $0<\bar{\l}<\L_N$ such that for all $\l\in (\bar{\l}, \L_N)$, the constant $T_{\l,N}$ is achieved.
\end{Proposition}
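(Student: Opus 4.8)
The plan is to reduce everything to Theorem \ref{exi-critical}: it suffices to exhibit, for $\l$ close enough to $\L_N$ from below, the strict inequality $T_{\l,N}<\min\{S_\l,S_N\}$. The point is that since $\L_N<\L$ the infimum $\L_N$ is attained, and its principal eigenfunction is an almost-optimal competitor for $T_{\l,N}$ whose Rayleigh quotient collapses to $0$ as $\l\uparrow\L_N$, while $\min\{S_\l,S_N\}$ stays bounded away from $0$ on $(0,\L_N)$.

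First I would invoke Proposition \ref{exist01} (together with Theorem \ref{exist00}): as $\L_N<\L$, there is $\bar u\in\mathbb{E}^{s}(\Omega,D)$, $\bar u>0$ in $\O$, solving the eigenvalue problem \eqref{autoT} with $\l=\L_N$. By Proposition \ref{sobolev} we have $\bar u\in L^{2^*_s}(\O)$, so after a harmless rescaling we may assume $\|\bar u\|_{L^{2^*_s}(\O)}=1$, making $\bar u$ admissible in \eqref{minm2}. Testing \eqref{autoT} with $\bar u$ (equivalently, using that $\bar u$ realizes $\L_N$) gives
$$\frac{a_{d,s}}{2}\int\int_{\mathcal{D}_{\Omega}}|\bar u(x)-\bar u(y)|^{2}\,d\nu=\L_N\int_\Omega\frac{\bar u^2}{|x|^{2s}}\,dx,$$
hence, plugging $\bar u$ into \eqref{minm2},
$$T_{\l,N}\le \frac{a_{d,s}}{2}\int\int_{\mathcal{D}_{\Omega}}|\bar u(x)-\bar u(y)|^{2}\,d\nu-\l\int_\Omega\frac{\bar u^2}{|x|^{2s}}\,dx=(\L_N-\l)\,M_0,$$
where $M_0:=\int_\Omega\bar u^2|x|^{-2s}\,dx\in(0,\infty)$ is a constant depending only on $\O,D,N,d,s$ (finiteness of $M_0$ comes again from the embedding of $\mathbb{E}^{s}(\Omega,D)$).

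Next I would bound $\min\{S_\l,S_N\}$ from below uniformly in $\l\in(0,\L_N)$. The Sobolev constant $S_N>0$ is fixed. For $S_\l$, combining the fractional Hardy inequality \eqref{Hardyint} with the Sobolev inequality of Proposition \ref{PSO} yields, for every $u$ admissible in \eqref{minm1},
$$\frac{a_{d,s}}{2}\int_{\mathbb{R}^d}\int_{\mathbb{R}^d}|u(x)-u(y)|^{2}\,d\nu-\l\int_{\mathbb{R}^d}\frac{u^2}{|x|^{2s}}\,dx\ge\Big(1-\frac{\l}{\L}\Big)\frac{a_{d,s}}{2}\int_{\mathbb{R}^d}\int_{\mathbb{R}^d}|u(x)-u(y)|^{2}\,d\nu\ge\Big(1-\frac{\l}{\L}\Big)S,$$
so $S_\l\ge(1-\L_N/\L)S>0$ for every $\l\in(0,\L_N)$ because $\L_N<\L$. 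Therefore $\min\{S_\l,S_N\}\ge c_1:=\min\{(1-\L_N/\L)S,\,S_N\}>0$ on $(0,\L_N)$.

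Finally, since $(\L_N-\l)M_0\to0$ as $\l\uparrow\L_N$, one may take $\bar\l:=\max\{\L_N-c_1/M_0,\ \L_N/2\}\in(0,\L_N)$; then for all $\l\in(\bar\l,\L_N)$ we get $T_{\l,N}\le(\L_N-\l)M_0<c_1\le\min\{S_\l,S_N\}$, and Theorem \ref{exi-critical} guarantees that $T_{\l,N}$ is achieved. I do not expect a serious obstacle here: the computation that the numerator on $\bar u$ equals $(\L_N-\l)M_0$ is exactly the defining property of the eigenfunction, and the uniform lower bound on $\min\{S_\l,S_N\}$ is immediate from Hardy and Sobolev; the only care needed is checking that $\bar u$ lies in $L^{2^*_s}(\O)$ (hence $M_0<\infty$), which is precisely Proposition \ref{sobolev}.
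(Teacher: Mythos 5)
Your proof is correct and follows essentially the same strategy as the paper: plug the normalized eigenfunction $\bar u$ (whose existence follows from $\L_N<\L$) into the definition of $T_{\l,N}$ to obtain $T_{\l,N}\le(\L_N-\l)\int_\Omega\bar u^2|x|^{-2s}\,dx\to 0$, and then invoke Theorem \ref{exi-critical} once $\l$ is close enough to $\L_N$. The only (cosmetic) deviation is your uniform lower bound $S_\l\ge(1-\L_N/\L)S$ derived directly from Hardy and Sobolev, whereas the paper simply uses monotonicity to write $S_\l>S_{\L_N}$; both justify the same conclusion.
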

\begin{proof}
Since $\L_N<\L$, by Theorem \ref{exist00}, it follows that $\L_N$ is achieved. Hence  there exists $\bar{u}\in E^s(\O,D)$ such that $\bar{u}$ is a positive solution to eigenvalue problem \eqref{otov}. Without loss of generality we can assume that $||\bar{u}||_{L^{2^*_s}(\O)}=1$, thus, using the definition of $T_{\l,N}$, we conclude that
$$
T_{\l,N}\le \dfrac{a_{d,s}}{2}\dyle\int\int_{\mathcal{D}_{\Omega}} |\bar{u}(x)-\bar{u}(y)|^2\,d\nu
 -\l\int_{\Omega}\dfrac{\bar{u}^2(x)}{|x|^{2s}}dx=(\L_N-\l)\int_{\Omega}\dfrac{\bar{u}^2(x)}{|x|^{2s}}dx.
$$
Since $\l<\L_N<\L$,  by using the definition of $S_\l$, it follows that $S_\l>S_{\L_N}\ge 0$.  It is clear that
$$
(\L_N-\l)\int_{\Omega}\dfrac{\bar{u}^2(x)}{|x|^{2s}}dx\to 0\mbox{  as  }\l\to \L_N.
$$
Therefore,  there exists $\bar{\l}\equiv\bar{\l}(S_{\L_N}, S_N, \bar{u})<\L_N$ such that if $\l\in (\bar{\l}, \L_N)$, then
$$
T_{\l,N}\le (\L_N-\bar{\l})\int_{\Omega}\dfrac{\bar{u}^2(x)}{|x|^{2s}}dx<\min\{S_{\L_N}, S_N\}<\min\{S_\l, S_N\}.
$$
Hence by Theorem \ref{exi-critical}, $T_{\l,N}$ is achieved and the result follows.
\end{proof}
\begin{remark}
 The  result in Proposition \ref{fin} goes in the spirit of the classical existence result by  H. Brezis and L. Nirenberg in the seminal paper for  the Dirichlet  problem of the Laplacian  with critical exponent; that result was   extended in \cite{SV} to the nonlocal case.
  More general conditions for the existence in the doubly critical problem with mixed boundary conditions seem to be unknown.
\end{remark}

\end{document}